
\documentclass[a4paper,11pt]{article}

\usepackage{amsmath,amssymb,amsthm,amsfonts,setspace,hyperref,dsfont,yhmath}
\usepackage{latexsym}
\usepackage{amsmath,amsthm}
\usepackage{amssymb}
\usepackage{multicol}
\usepackage{graphics}
\usepackage{graphicx}

\setcounter{section}{-1}


\evensidemargin 10pt \marginparsep 0pt \textwidth=135mm

\newtheorem*{theoremA}{Theorem A}
\newtheorem*{theoremB}{Theorem B}
\newtheorem*{theoremC}{Theorem C}
\newtheorem*{theoremD}{Theorem D}
\newtheorem*{theoremE}{Theorem E}

\newtheorem*{corollaryA1}{Corollary A.1}
\newtheorem*{corollaryA2}{Corollary A.2}

\newtheorem*{corollaryB1}{Corollary B.1}
\newtheorem*{corollaryB2}{Corollary B.2}

\newtheorem*{corollaryD1}{Corollary D.1}

\newtheorem{theorem}{Theorem}[section]
\newtheorem{proposition}[theorem]{Proposition}
\newtheorem{lemma}[theorem]{Lemma}
\newtheorem{corollary}[theorem]{Corollary}
\newtheorem{remark}[theorem]{Remark}
\newtheorem{example}[theorem]{Example}
\newtheorem{question}[theorem]{Question}

\def\Aff{\mathop{\hbox{\rm Aff}}}
\def\Diff{\mathop{\hbox{\rm Diff}}}

\def\dim{\mathop{\hbox{\rm dim}}}

\def\id{{\rm id}}
\def\Id{{\rm Id}}

\def\top{{\rm top}}

\def\eqref#1{(\ref{#1})}

\def\disp{\displaystyle}
\def\<<{\prec}

\def\H{{\mathcal H}}

\def\H{{\mathcal H}}

\def\T{{\mathbb T}}
\def\Z{{\mathbb Z}}

\def\a{\alpha}

\def\e{\varepsilon}

\def\l{\lambda}
   \def\O{\Omega}

\newcommand{\NN}{\mathbb{N}}
\newcommand{\RR}{\mathbb{R}}

\newcommand{\CC}{\mathbb{C}}

\newcommand{\TT}{\mathbb{T}}
\newcommand{\ZZ}{\mathbb{Z}}

\newcommand{\norm}[1]{\lVert#1\rVert}
\newcommand{\abs}[1]{\lvert#1\rvert}
\begin{document}
\title{{
\bf Some ergodic and rigidity properties of discrete Heisenberg
group actions } \footnotetext{{\sl 2000 Mathematics Subject
Classification}: 37D25, 37D20, 37C85.} \footnotetext{{\sl
Keywords}: Lyapunov exponent, Anosov diffeomorphism, Heisenberg
group, group action, smooth rigidity.} \footnotetext{The second
author is supported by Qing Lan Project of Jiangsu province, NSF
grands of Jiangsu province (No. BK2011275) and  NSFC (No.
11271278).} \footnotetext{The third author is supported by NSF
grants DMS-1346876.} \footnotetext{*Corresponding author.}
\author{Huyi Hu $^a$ \& Enhui Shi $^{b,*}$ \& 
Zhenqi Jenny Wang $^c$\vspace{3mm}\\
\small {\sl $^{a, c}$Department of Mathematics,}\\
\small{\sl Michigan State University, East Lansing, MI 48824,USA}\\
\small {\sl $^b$School of Mathematical Sciences,}\\
\small{\sl Soochow University, Suzhou, Jiangsu 215006, P.R.China}}
}
\maketitle


\begin{abstract}
The goal of this paper is to study ergodic and rigidity properties of 
smooth actions of the discrete Heisenberg group $\H$. We establish the
decomposition of the tangent space of any $C^\infty$ compact
Riemannian manifold $M$ for Lyapunov exponents, 
and show that all Lyapunov exponents for the center elements
are zero.  We obtain that if an $\H$ group action contains an Anosov element, 
then under certain conditions on the element, the center elements
are of finite order.  In particular there is no faithful codimensional 
one Anosov Heisenberg group action on any manifolds, and 
no faithful codimensional two Anosov Heisenberg group action on tori.  
In addition, we show smooth local rigidity for higher rank ergodic 
$\H$ actions by toral automorphisms, using a generalization of the KAM
(Kolmogorov-Arnold-Moser) iterative scheme.
\end{abstract}

\section{Introduction}

  In the past few decades, there is a considerable
progress in studying the ergodic theory and smooth rigidity of dynamical system of
higher rank abelian group actions. Smooth action (local) global rigidity for higher rank abelian algebraic actions has since been extensively studied; some of the highlights are \cite{Damjanovic3}, \cite{Damjanovic4} and \cite{KKH,HW}. We refer the reader to \cite{Sc} for a systematic
introduction of the dynamics of algebraic $\mathbb Z^d$ actions.
A natural question is how to extend these theories to noncommutative
group actions. The discrete Heisenberg group is a 2-step nilpotent
group, which is the most close to being abelian. So studying the
dynamical properties of discrete Heisenberg group actions is the
first step toward extending what we have known about $\mathbb Z^d$-actions. 

Throughout the paper, we use the symbol $\H$ to denote the
discrete Heisenberg group (see Section 1 for the explicit
definition). 
We first establish a tangent space decomposition into subspaces
related to Lyapunov exponents on any compact manifolds $M$ in Theorem A, 
which can be viewed as
an extension of the corresponding theorem established for $\mathbb
Z^2$ actions in \cite{Hu}. As corollaries of this theorem, we
obtain that the action of central elements of $\H$ must have $0$
Lyapunov exponents respect to any $\H$ invariant measure, and have
$0$ topological entropy when the action is $C^\infty$. This
indicates that the action of central elements in $\H$ cannot be
chaotic for any $\H$ action on compact manifolds.

The second part of our work is concerning faithfulness of $\H$ actions.
We show in Theorem B that if an $\H$ action is $C^r$, $r>1$,
and contains an Anosov element which has simple eigenvalues 
on stable direction with $\l_->\l_+^{\min\{r,2\}}$ (see \eqref{fdeflampm}),
then the action of any central element of $\H$ is of finite order.
Specially, it is true for any codimension $1$ action. 
For Anosov $\H$ actions on tori, we show further in Theorem D 
that the action of any central element
is either conjugate to a translation of finite order or conjugate
to an affine transformation of order $2$ when the action is of
codimension one or two. This implies specially that there is no
faithful Anosov $\H$ action on $\mathbb T^n$ with $n\leq 5$,
though there are faithful Anosov $\H$ actions on $\mathbb T^n$
with $n=6$ or $n\ge 8$ 
(see Example~\ref{Exam2} and Remark~\ref{RmkThmB} in Section~\ref{S1}). 

Lastly we obtain some regidity results for $\H$ actions on tori.
We prove that all such actions are topologically conjugate 
to an affine one in Theorem C, using the results in \cite{AP},
\cite{Fr} and \cite{Ma}.
Further, we extend an approach for proving local differentiable rigidity 
of Heisenberg group action by toral automorphisms, 
based on KAM-type iteration scheme that was first introduced 
in \cite{Damjanovic4} and was later developed in \cite{Damjanovic3}.

Recently, we note that the expansiveness and
homoclinic points for Heisenberg algebraic actions are
investigated by M. G\"oll, K. Schmidt, and E. Verbitskiy in
\cite{GSV}. One may consult \cite{GS, Li} for the study 
of abstract ergodic theory
about nilpotent group or amenable group actions.
It is known in 1970s that if 
$M={\mathbb R}, {\mathbb S}^1$ or $I=[0,1]$, then any nilpotent
subgroup of $\Diff^2(M)$ must be abelian (\cite{PT}), which
implies that there is no faithful $C^2$ action of $\H$ on $\mathbb
S^1$. In this century it was found out that
every finitely generated, torsion-free nilpotent
group has a faithful $C^1$ action on $M$ (\cite{FF}). For the case
$\dim M=2$, there are many faithful analytic Heisenberg group
actions on ${\mathbb S}^2$, closed disks, closed annulus and torus
(\cite{Pa}). However, Franks and Handel \cite{FH} showed that a
nilpotent group of $C^1$ diffeomorphisms which are isotopic to the
identity and preserve a measure whose support is all of $\mathbb T^2$ 
must be abelian. 

The paper is organized as following.
We state the results of the paper in Section~\ref{S1}.
Section~\ref{S2} is for proof of Thereom~A concerning Lyapunov exponents,
while Section~\ref{S3} is for proof of Thereom~B concerning faithfulness.
Thereom~C and D are proved in Section~\ref{S4}.
The last section is for smooth rigidity: the proof of Theorem E.


\section{Background and statement of result}\label{S1}

Let $M$ be a $C^\infty$ compact manifold. We denote by
$\Diff^n(M)$ the group of $C^r$ diffeomorphisms on $M$ for $r>0$. Let $\H$ be the discrete Heisenberg group and let $A$, $B$ and $\mathcal{C}$ denote the generators of $\H$: \
\begin{align}\label{for:1110}
A\mathcal{C}=\mathcal{C}A,\ B\mathcal{C}=\mathcal{C}B,\ AB=BA\mathcal{C}
\end{align}
Then for every $K\in \H$, there is
a unique triple $(n_1, n_2, n_3)\in \mathbb Z^3$ such that
$K=A^{n_1}B^{n_2}\mathcal{C}^{n_3}.$ Clearly, $\cal H$ is a 2-step nilpotent
group with center $\langle \mathcal{C}\rangle$. In this paper, we are
concerning ergodic and rigidity properties of $\cal H$ actions on compact
manifolds.

Let $\alpha: \H\rightarrow \Diff^n(M)$ be a $C^r$ group action of
$\H$ on a $C^\infty$ compact Riemannian manifold $M$, i.e.,
$\alpha: G\rightarrow {\rm Diff}^n(M)$ is a group homomorphism.
Write $f=\alpha(A)$, $g=\alpha(B)$ and $h=\alpha(\mathcal{C})$,
then
\begin{equation}\label{fdefHG}
fh=hf, \quad  gh=hg \quad  \mbox{and}\quad   fg=gfh.
\end{equation}
Throughout the paper, we always use $f$, $g$ and $h$ to denote
$\alpha(A)$, $\alpha(B)$ and $\alpha(\mathcal{C})$ respectively for a fixed
$\H$ action $\alpha$.

The Heisenberg group $\H$ naturally induces an action on $\mathbb
T^3$ since $\H$ embeds into $ {\rm SL}(3, \mathbb Z)$. We can obtain more
general examples as the following.

\begin{example}\label{Exam1}
Let
\[
\begin{array}{ccc}
A=\left[
  \begin{array}{ccc}
  X & I_n & O\\
  O & X & O\\
  O & O & X
  \end{array}
  \right], \quad \
\ B=\left[
  \begin{array}{ccc}
  Y & O & O\\
  O & Y & I_n\\
  O & O & Y
  \end{array}
  \right],  \quad \
\ \mathcal{C}=\left[
  \begin{array}{ccc}
  I_n & O & X^{-1}Y^{-1}\\
  O & I_n & O\\
  O & O & I_n
  \end{array}
  \right],
\end{array}
\]
where $X, Y\in {\rm  SL}(n, {\mathbb Z})$ with $XY=YX$, and $I_n$
and $O$ are $n\times n$ identity and zero matrix respectively.

It is easy to check that condition \eqref{for:1110} is satisfied. If
$M={\mathbb T}^{3n}$, then $A$, $B$ and $\mathcal{C}$ induce automorphisms
$f$, $g$ and $h$ on $M$ respectively that generate a Heisenberg
group action.
\end{example}


\subsection{Dynamical properties of the central element $h$}
A probability measure $\mu$ on $M$ is said to be {\it
$\alpha$-invariant} if $\alpha(k)_*\mu=\mu$ for every $k\in \H$.
We denote by ${\cal M}(M, \alpha)$ the set of all
$\alpha$-invariant Borel Probability measures. It is well known
and easy to prove that ${\cal M}(M, \a)={\cal M}(M, f)\cap{\cal
M}(M, g)\cap {\cal M}(M, h)\not=\emptyset$, where ${\cal M}(M, f)$
is the set of $f$-invariant probability measures.

Let $T_xM$ be the tangent space of $M$ at $x\in M$ and
$\phi\in{\Diff}^2(M)$, then $\phi$ induces a map $D\phi_x:
T_xM\rightarrow T_{\phi x}M$. It is well known that there exists a
measurable set $\Gamma_\phi$ with $\mu \Gamma_\phi=1$ for all
$\mu\in{\cal M}(M, \phi)$, such that for all $x\in\Gamma_\phi$,
$0\not=u\in T_xM$, the limit\vspace{-2mm}
$$
\chi(x,u,\phi)=\lim\limits_{n\to\infty}{1\over n}{\rm
log}\parallel D\phi_x^nu\parallel\vspace{-2mm}
$$
exists and is called the {\it Lyapunov exponent} of $u$ at $x$.

Let $\lambda_1(x,\phi)>\cdots>\lambda_{r(x,\phi)}(x,\phi)$ denote all
Lyapunov exponents of $\phi$ at $x$ with multiplicities
$m_1(x,\phi),\cdots,m_{r(x,\phi)}(x,\phi)$ respectively, and
$T_xM=\bigoplus_{i=1}^{r(x,\phi)}E_i(x,\phi)$ be the corresponding
decomposition of tangent space at $x\in M$.

The first result shows that the Lyapunov space decomposition of $\H$-action is similar to the
case of $\ZZ^2$ action:
\begin{theoremA}  There exists an $\H$-invariant
measurable set $\Gamma$ such that $\mu\Gamma=1$ for all
$\mu\in{\cal M}(M, \a)$, and for every $x\in\Gamma$ there is
a decomposition of the tangent space into
$$
T_xM=\bigoplus_{i=1}^{r(x,f)}\bigoplus_{j=1}^{r(x,g)}E_{ij}(x)
$$
such that if $E_{ij}(x)\not=\{0\}$, then for all $0\not=u\in
E_{ij}(x)$ and all $s,t,r\in\mathbb Z$,
\begin{equation}\label{fThmA}
\lim\limits_{n\to\infty}\frac{1}{n}{\rm log}\parallel
D(f^sg^th^r)_x^nu\parallel=s\lambda_i(x,f)+t\lambda_j(x,g).
\end{equation}
\end{theoremA}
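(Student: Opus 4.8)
The plan is to reduce Theorem~A to the existence of a simultaneous Oseledets-type decomposition for $f$ and $g$, with the central element $h$ shown to be dynamically negligible. First I would fix, once and for all, a full-measure invariant set: let $\Gamma_0$ be the set of Oseledets-regular points common to $f$, $g$, $h$ and their inverses (this has full measure for every $\mu\in\mathcal M(M,\alpha)$, since such $\mu$ is invariant under each of $f,g,h$), and set $\Gamma=\bigcap_{k\in\H}\alpha(k)^{-1}\Gamma_0$, which is $\H$-invariant and still of full measure because $\H$ is countable. On $\Gamma$ I have the two decompositions $T_xM=\bigoplus_i E_i(x,f)$ and $T_xM=\bigoplus_j E_j(x,g)$ from the Oseledets theorem, and the goal is to show they admit a common refinement $E_{ij}(x)=E_i(x,f)\cap E_j(x,g)$ on which the exponent of $f^sg^th^r$ is $s\lambda_i(x,f)+t\lambda_j(x,g)$.

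The first key step is to prove that every Lyapunov exponent of the central element $h$ vanishes. Here I would exploit that $h$ genuinely commutes with $f$, so the $\ZZ^2$ theory of \cite{Hu} applies verbatim to the abelian pair $(f,h)$, giving a joint decomposition $\bigoplus_{i,k}E_{ik}(x;f,h)$ on which $f^ah^b$ has exponent $a\lambda_i(x,f)+b\mu_k(x,h)$, where the $\mu_k(x,h)$ are the exponents of $h$. The relations \eqref{fdefHG} give $g^{-1}fg=fh$, hence $g^{-n}fg^n=fh^n$, so $fh^n$ is conjugate to $f$ by $g^n$; since $g$ preserves each $\mu\in\mathcal M(M,\alpha)$, the diffeomorphisms $fh^n$ and $f$ have the same Lyapunov spectrum for every $n$. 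On $E_{ik}(x;f,h)$ the exponent of $fh^n$ equals $\lambda_i(x,f)+n\mu_k(x,h)$; if some $\mu_k\neq0$ this would be unbounded in $n$, contradicting that the spectrum of $fh^n$ is the ($n$-independent, bounded) spectrum of $f$. Hence all $\mu_k(x,h)=0$ a.e., i.e.\ $h$ has only zero exponents (this also yields the advertised corollary); in particular $E_i(x,fh)=E_i(x,f)$.

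Next I would build the joint decomposition. Conjugacy by $g$ sends the Oseledets spaces of $f$ to those of $g^{-1}fg=fh$, i.e.\ $Dg_x\bigl(E_i(x,fh)\bigr)=E_i(gx,f)$; combined with $E_i(x,fh)=E_i(x,f)$ this gives $Dg_x\bigl(E_i(x,f)\bigr)=E_i(gx,f)$, so $Dg$ preserves the $f$-decomposition exactly as in the commuting case. The symmetric relation $f^{-1}gf=gh^{-1}$ gives $Df_x\bigl(E_j(x,g)\bigr)=E_j(fx,g)$, while $Dh$ preserves both decompositions because $h$ commutes with $f$ and with $g$. Restricting the $g$-cocycle to the $Dg$-invariant subbundle $E_i(f)$ and applying Oseledets inside it produces the refinement $E_i(x,f)=\bigoplus_j E_{ij}(x)$ with $E_{ij}(x)=E_i(x,f)\cap E_j(x,g)$; summing over $i$ gives $T_xM=\bigoplus_{i,j}E_{ij}(x)$, invariant under $Df$, $Dg$ and $Dh$.

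The last step, and the place where I expect the real difficulty, is the exponent formula on $E_{ij}$. Writing out powers in $\H$ gives $(f^sg^th^r)^n=f^{ns}g^{nt}h^{\,nr-\binom{n}{2}st}$, so the central factor appears with an exponent that is \emph{quadratic} in $n$, and the whole point is to show this contributes nothing to the limit. The hard part is therefore not the additivity of the $f$- and $g$-pieces (immediate from $E_{ij}\subseteq E_i(f)\cap E_j(g)$) but controlling $\tfrac1n\log\|Dh^{\,nr-\binom n2 st}_x u\|$ for $u\in E_{ij}$: knowing merely that $h$ has zero exponent only gives $\log\|Dh^m u\|=o(m)$, which is too weak at $m\sim n^2$. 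My plan is to upgrade this by constructing a tempered Lyapunov norm on each subbundle $E_{ij}$ in which $Df$, $Dg$ and $Dh$ act as the scalars $e^{\lambda_i}$, $e^{\lambda_j}$, $1$ times subexponentially-distorted isometries; in such coordinates the exponent functional $w\mapsto\lim_n\tfrac1n\log\|D\alpha(w)^n_x u\|$ on $E_{ij}$ becomes additive, i.e.\ a homomorphism $\H\to\RR$, which automatically annihilates the commutator $h$ and yields $s\lambda_i(x,f)+t\lambda_j(x,g)$. Making the subexponential distortion uniform enough to kill the $n^2$-term, equivalently proving $\tfrac1n\log\|Dh^{\,cn^2}_x|_{E_{ij}}\|\to0$, is the technical crux I would have to carry out in detail.
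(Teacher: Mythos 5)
Your first three steps are correct and run parallel to the paper's own argument. The vanishing of the $h$-exponents is Lemma~\ref{LThmA2} there: the paper applies the $\ZZ^2$ theorem of \cite{Hu} to the commuting pair $(g,h)$ and pushes the Oseledets data forward by $Df^n$, so that the $g$-exponent of the image space becomes $\lambda_{j_0}(x_0,g)-n\lambda_{k_0}(x_0,h)$, contradicting the uniform bound on $g$-exponents; your conjugation identity $g^{-n}fg^n=fh^n$ exploits the same unboundedness phenomenon in mirror image and is equally valid. Likewise your construction of $E_{ij}(x)=E_i(x,f)\cap E_j(x,g)$, by restricting one derivative cocycle to the invariant subbundles of the other, is exactly Propositions~\ref{PThmA3}--\ref{PThmA5}.

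The genuine gap is your last step, and it is not a technicality that could be ``carried out in detail'' along the lines you propose. The pointwise statement your Lyapunov-norm plan requires, namely $\frac1n\log\norm{Dh^{cn^2}_x|_{E_{ij}(x)}}\to0$ for $\mu$-a.e.\ $x$, is (in strong form) precisely the Question the paper poses as open immediately after Corollary~A.2: whether $\norm{Dh^m_xv}$ is bounded a.e.\ by $e^{\e\sqrt m}$. A tempered Lyapunov norm for $h$ controls the distortion of $Dh^m$ only by $e^{o(m)}$, which at $m\sim n^2$ is $e^{o(n^2)}$ and is worthless after dividing by $n$; vanishing of the exponents of $h$ gives nothing better, and the additivity/homomorphism claim cannot be bootstrapped from it, since additivity of the exponent functional on the non-abelian group $\H$ is exactly what is at stake. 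The paper's proof of \eqref{fThmA} avoids any pointwise control of the quadratic central power: assuming the failure set has measure $\delta>0$, it is uniformized to a set $A_{\e,C}$ of measure $\delta$, and for each fixed $n$ a set $D_n$ with $\mu(D_n)>1-\delta$ is built (from the Oseledets sets $A^l_f,A^l_g,A^l_h$ and finitely many of their preimages) on which the group identity $f^{sn}g^{tn}=g^{tn}f^{sn}h^{stn^2}$, read through the $f$- and $g$-estimates on both sides, forces $\norm{Dh^{stn^2}_xu}\le l^4\norm{u}e^{2(s+t)n\e}$; that is, the quadratic power of $Dh$ is bounded by playing the two factorizations of $f^{sn}g^{tn}$ against each other, not by any property of $h$ alone. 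Since $\mu(D_n\cap A_{\e,C})>0$ for every $n$, choosing a point in this intersection for large $n$ produces incompatible upper and lower bounds on $\norm{D(f^sg^th^r)^{2n}_xu}$, the desired contradiction. The decisive idea you are missing is that the bound on $Dh^{stn^2}$ need only hold on a large-measure set that is allowed to depend on $n$, never at a fixed point for all $n$; that is what lets the paper prove Theorem~A without answering the open question your approach runs into.
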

By this theorem, we have some immediate corollaries, 
which indicate that the action of central elements in $\H$ cannot be
chaotic for any $\H$ action on compact manifolds. The concept of
chaos was first introduced by J.~Yorke and T.~Li in \cite{LJ}. Up to
today, there are many definitions of chaos based on different
viewpoints. In general, having positive Lyapunov exponents or
having positive topological entropy is regarded as an important
feature of chaos for diffeomorphisms.

\begin{corollaryA1}
All Lyapunov exponents of $h$ are zero with respect to
any measure $\mu\in{\cal M}(M, \a)$.
In particular, for any $n>0$, if $h$ has finitely many periodic points $p$
of period $n$, then all eigenvalues of $Dh^n_p$ have modulus $1$.
\end{corollaryA1}

\begin{corollaryA2}
If the action $\alpha$ is $C^\infty$, then the topological entropy
$h_{\top}(h)= 0$.
\end{corollaryA2}

\begin{remark}
As we mentioned in Introduction, Corollary~A.1 and Corollary~A.2
indicate that the central elements in $\H$ cannot be chaotic for
any $C^\infty$ $\H$ action on manifolds.
\end{remark}

Observe that $f^ng^n=g^nf^nh^{n^2}$, we have naturally the
following question.
\begin{question}
For any $\mu\in{\cal M}(M, \a)$, is $\|Dh^n_x v\|$ bounded by
$e^{\sqrt{n}\e}$ for some $\e>0$, or even by a polynomial in $n$
for $\mu$-a.e. $x$?
\end{question}

\subsection{Existence of faithful Anosov $\H$ action}
A group action $\alpha: G\to \Diff^r(M)$ is called \emph{faithful} 
if the map $\alpha$ is injective. 
We give some conditions under which an $\H$ action cannot be faithful.

A diffeomorphism $\phi$ on a compact manifold $M$ is called
\emph{Anosov}  if the whole tangent bundle has an uniformly
hyperbolic decomposition into $T_xM=E^s_x\oplus E^u_x$ invariant
under the differential $D\phi:TM\rightarrow TM$. For a group
action $\alpha: G\to \Diff^n(M)$, if  $\alpha(G)$ contains an
Anosov element, then the action $\alpha$ is called \emph{Anosov}.
The following is an example of Anosov $\H$ action:

\begin{example}\label{Exam2}
In Example~\ref{Exam1}, if $X$ or $Y$ is a hyperbolic matrix, then
$f$ or $g$ is a hyperbolic diffeomorphisms on $M=\T^{3n}$. Hence
$\a(\H)$ is an Anosov action. In particular, we can take
$M={\mathbb T}^{6}$, and
\[
\begin{array}{ccc}
X=Y=\left[
  \begin{array}{ccc}
  2 & 1\\
  1 & 1
  \end{array}
  \right], \quad
\ I_2=\left[
  \begin{array}{ccc}
  1 & 0 \\
  0 & 1
  \end{array}
\right],  \quad
\ O=\left[
  \begin{array}{ccc}
  0 & 0 \\
  0 & 0
  \end{array}
\right],
\end{array}
\]
and then we get a Anosov $\H$ action on $M={\mathbb T}^{6}$.
\end{example}

We say that an Anosov diffeomorphism $\phi$ has \emph{simple
eigenvalues on the stable direction} if for every periodic point
$p$ of period $n$, all eigenvalues of $D\phi^n_p|_{E^s_p}$ are
real and has algebraic multiplicity $1$. An Anosov diffeomorphism
is said to be of \emph{codimension $1$} if either $\dim E^u=1$ or $\dim
E^s=1$. Clearly, either $\phi$ or $\phi^{-1}$ has simple
eigenvalues on the stable direction if $\phi$ is of codimension
$1$.

If $D\phi_p^n|_{E^s_p}$ has eigenvalues $\l_1, \l_2, \cdots,
\l_{\dim E^s(\phi)}$ on the stable direction, we denote by
\begin{align}\label{fdeflampm}
  \l_-=\min_{1\le i\le \dim E^s(\phi)}|\l_i|, \quad
\l_+=\max_{1\le i\le \dim E^s(\phi)}|\l_i|.
\end{align}



\begin{theoremB} Let $\alpha: \H\rightarrow \Diff^r(M)$, $r>1$, be an
action of $\H$ on compact manifold $M$ such that $\alpha(\H)$
contains an  Anosov diffeomorphism that has simple eigenvalues
with $\l_->\l_+^{\min\{2, r\}}$ on the stable direction.  Then $h^k=\id$ for
some $k\ge 1$.
\end{theoremB}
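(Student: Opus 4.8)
The plan is to exploit that the central element $h$ commutes with the Anosov element $\phi:=\alpha(K_0)\in\alpha(\H)$, so that $h$ is rigidly constrained by the hyperbolic structure of $\phi$ while, by Corollary~A.1, carrying only zero Lyapunov exponents. First I would record the purely dynamical consequences of commutation. Since the stable and unstable subbundles and the invariant foliations of $\phi$ are defined intrinsically through contraction and expansion rates, $Dh$ maps $E^s_x$ to $E^s_{h(x)}$ and $E^u_x$ to $E^u_{h(x)}$, $h$ carries stable leaves to stable leaves, and $h$ permutes the periodic points of $\phi$ while preserving their period. Because the set of periodic points of any fixed period is finite, some power of $h$ fixes any prescribed periodic orbit, and I can analyse $h$ locally there.

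Next I would analyse the differential along the stable direction. Fix a periodic point $p$ of period $n$ and pass to a power $\psi=h^{m}$ fixing $p$. Then $D\psi_p$ commutes with $D\phi^n_p$, and the hypothesis that $D\phi^n_p|_{E^s_p}$ has simple real eigenvalues forces $D\psi_p|_{E^s_p}$ to be diagonal in the same eigenbasis with real eigenvalues; Corollary~A.1 makes these eigenvalues have modulus one, hence equal to $\pm1$, so $D\psi^2_p|_{E^s_p}=\id$. I would then upgrade this pointwise statement to a bundle statement: simplicity together with the domination coming from the gap between contraction rates should yield a continuous $\phi$-invariant splitting $E^s=\bigoplus_j L_j$ into line bundles preserved by $h$, on each of which $Dh$ acts by a continuous real scalar cocycle; the vanishing of the Lyapunov exponents together with continuity forces this cocycle to be $\pm1$-valued. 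The aim is to conclude that $Dh^2$ acts trivially along $E^s$, removing the dependence on the particular orbit and producing a \emph{uniform} power of $h$ with trivial stable differential.

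I would then convert this differential rigidity into an honest identity. Here the bunching hypothesis $\lambda_->\lambda_+^{\min\{2,r\}}$ enters: it is exactly the non-resonance threshold guaranteeing a $C^1$ non-stationary linearization of $\phi$ along its stable manifolds (a Sternberg/Sell-type normal form), in which $\phi$ restricted to each stable leaf becomes the linear contraction $\mathrm{diag}(\lambda_j)$ and $h$ intertwines the linearizing charts on $W^s(x)$ and $W^s(h x)$ via the linear map $Dh|_{E^s_x}$. In these coordinates a diffeomorphism that commutes with a linear contraction and has identity differential at a fixed point of the leaf must be the identity on the whole leaf, because conjugating by large powers of the contraction pushes every point to the fixed point, where the map is tangent to the identity. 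Applying this to the appropriate fixed power of $h$ along the stable leaves of periodic points shows that this power is the identity on each such leaf; since these leaves are dense in $M$ and $h$ is continuous, one obtains $h^{k}=\id$.

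The main obstacle is precisely this last globalization, and it is where both hypotheses are indispensable. Simplicity of the stable eigenvalues is what rules out genuine rotations in $Dh|_{E^s}$ (eigenvalues of modulus one that are not $\pm1$), and hence gives the uniform, finite order of the differential; without it $h$ could a priori behave like an irrational rotation in some stable direction and never have finite order. The exponent condition $\lambda_->\lambda_+^{\min\{2,r\}}$ is what supplies enough regularity of the linearizing conjugacy to integrate the infinitesimal statement into the global statement along leaves; at bare $C^1$ regularity, or without the gap, one cannot exclude commuting diffeomorphisms that are tangent to the identity yet of infinite order. A secondary technical point is arranging that a \emph{single} power $k$ works simultaneously at all periods, which I would handle by first establishing the bundle-level $\pm1$ statement before invoking any per-orbit power, and ensuring that the periodic points of $\phi$ are dense enough in $M$ for the final continuity argument; the latter is automatic in the codimension-one and toral settings to which Theorems~B and~D specialize, and in general one restricts attention to a basic set and to the invariant measures supported there.
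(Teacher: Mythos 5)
Your skeleton matches the paper's proof (powers of $h$ fix periodic points of the Anosov element; simplicity plus Corollary~A.1 force $Dh^{2m}|_{E^s_p}=\id$; the pinching condition upgrades this to the identity on the stable leaf; density of stable leaves finishes), but there is a genuine gap at the one step where the Heisenberg relation, rather than mere commutation with an Anosov map, enters: your invocation of Corollary~A.1 at the periodic point $p$. Corollary~A.1 yields modulus-one eigenvalues of $Dh^m_p$ only if either (i) there is an $\alpha$-invariant measure charging $p$, or (ii) $h$ itself (not $\phi$) has finitely many periodic points of the relevant period --- and neither is available in your setup. You arrange that some power of $h$ fixes $p$, but you never touch $g=\alpha(B)$: since $f^ng=gf^nh^n$, the image $g(p)$ of an $n$-periodic point of $f$ need not be $n$-periodic, so the $\alpha$-orbit of $p$ is not obviously finite and no invariant measure at $p$ is in hand. (Commutation alone cannot suffice: $\phi$ commutes with itself and has hyperbolic eigenvalues at $p$.) The paper devotes Lemma~3.2 precisely to this: finiteness first gives $h^k(p)=p$; then $f^ng^k=g^kf^nh^{kn}$ shows $g^k(p)$ is again $n$-periodic for $f$, whence $g^m(p)=p$ for some $m$ divisible by $k$; then $(f^{n},g^{mk},h^{nmk})$ is itself a Heisenberg triple with common fixed point $p$, the Dirac mass at $p$ is invariant for that sub-action, and Corollary~A.1 applied to its central element gives the $\pm 1$ statement. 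Without this (or an equivalent) step, your eigenvalue claim is unsupported.

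Two further points. First, your proposed ``bundle statement'' --- a continuous $\phi$-invariant splitting of $E^s$ into $h$-invariant line bundles carrying a $\pm1$-valued cocycle --- does not follow from simplicity of eigenvalues at periodic points: pointwise spectral data gives no global dominated splitting into line bundles. It is also unnecessary; the paper obtains a single power $k$ from the spectral decomposition: finitely many basic sets, one periodic point $p_i$ per basic set, $M=\bigcup_i\overline{W^s(p_i,f^{n_i})}$, and $k$ the product of the finitely many exponents involved. Second, your leaf-wise step via non-stationary Sternberg/Sell linearization is a genuinely different and heavier route than the paper's, and it is delicate: the paper proves $h=\id$ on $W^s(p,f)$ by a direct rate comparison --- if $h(x)\neq x$, then $d(f^nx,f^nh(x))/d(f^nx,p)\gtrsim \bigl((\lambda_--\varepsilon)/(\lambda_++\varepsilon)\bigr)^n$, while $Dh_p|_{E^s}=\id$ together with $C^{\min\{2,r\}}$ regularity forces this ratio to be $O\bigl((\lambda_++\varepsilon)^{n(\min\{2,r\}-1)}\bigr)$, contradicting $\lambda_->\lambda_+^{\min\{2,r\}}$. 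Your version is sound only if the normal form theorem you cite includes the statement that centralizer elements act \emph{linearly} in the linearizing charts; with a bare $C^1$ linearization the conjugated $h$ is merely $C^1$, and, as you yourself observe, tangency to the identity at $C^1$ regularity is not enough to propagate the identity along the leaf. At regularity $1<r<2$ the normal-form literature you would need is also much thinner than the elementary estimate the paper uses.
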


Following from Theorem B, we have immediately

\begin{corollaryB1}
There is no faithful $C^r$, $r>1$, Heisenberg group action that contains
an Anosov element with simple eigenvalues with  $\l_->\l_+^{\min\{2, r\}}$ 
on the stable  direction. In particular, there is no faithful $C^r$
codimension $1$ Anosov Heisenberg group action on any compact
manifold.
\end{corollaryB1}

Also the proof of Theorem B gives

\begin{corollaryB2}
For a $C^r$, $r>1$, $\H$ action on $M$, 
if any element is a codimension 1 Anosov diffeomorphism 
that has exactly one fixed point, then $h^2=\id$.
\end{corollaryB2}

The next statement is a consequence of Theorem~D.
We state here since it concerns faithfulness.

\begin{corollaryD1}
There is no faithful $C^2$ Anosov Heisenberg group action on
$\mathbb T^n$ with $n\leq 5$.
\end{corollaryD1}

As supplements to Corollary~D.1, we give the following remark.

\begin{remark}\label{RmkThmB}
Example~\ref{Exam2} indicates that $\mathbb T^6$ does admit
faithful Anosov Heisenberg group $\cal H$ actions. So the number
``\ $5$" appeared in the above corollary is the best. In fact, it
is easy to see that there exist faithful Anosov Heisenberg group
actions on $\mathbb T^n$ for any integer $n\geq 8$ by using the action
in Example~\ref{Exam2} crossing an Anosov diffeomorphism of an manifold
of dimension two or higher.  However, the authors do not
know whether such group actions exist on $\mathbb T^7$. 
\end{remark}

\subsection{Rigidity of $\H$ action}
Let $\alpha, \alpha'$ be two actions of group $G$ on $M$, then
$\alpha$ and $\alpha'$ are said to be {\it topologically
conjugate} if there is a homeomorphism $T: M\rightarrow M$ such
that $T\circ\alpha(g)=\alpha'(g)\circ T$ for any $g\in G$.

The first two results are about global topological rigidity of an $\H$ action on $\TT^N$. Before the statement of Theorem C, we introduce some related notions.
Let $\mathbb T^n\equiv\mathbb R^n /\mathbb Z^n$ be the
$n$-dimensional torus. For any $A\in {\rm GL}(n, \mathbb Z)$ and
$a\in \mathbb R^n$, define
$T_{A, a}:\ \mathbb T^n\rightarrow\mathbb T^n$
by $T_{A, a}([x])=[Ax+a]$ for any $x\in\mathbb R^n$.
Such $T_{A, a}$ is called an \emph{affine transformation} on $\mathbb T^n$.
Specially, $T_{A,0}$ is called a \emph{linear automorphism}
of $\mathbb T^n$ induced by $A$ and $T_{\id, a}$ is called a
\emph{translation} on $\mathbb T^n$ by $a$.
All the affine transformations on $\mathbb T^n$ form a group
which is called the \emph{affine transformation group}
of $\T^n$ and is denoted by $\Aff(\mathbb T^n)$.  If $A$ has no
eigenvalue with modular $1$, then $T_{A,a}$ is an Anosov
diffeomorphism.
\begin{theoremC}
Every Anosov Heisenberg group action on ${\mathbb T^n}$
is topologically conjugate to an affine one.
\end{theoremC}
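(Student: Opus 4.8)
The plan is to straighten out one Anosov element using the Franks--Manning machinery, force the central element to become genuinely linear, and then propagate affineness to the whole group via the Adler--Palais rigidity of conjugacies between toral automorphisms. First I would fix an Anosov element $\Phi\in\a(\H)$, say $\Phi=\a(K_0)$ for some $K_0\in\H$, and let $\rho:\H\to\mathrm{GL}(n,\Z)$ be the homomorphism sending $k$ to the automorphism $\a(k)_*$ induced on $H_1(\T^n)\cong\Z^n$. By Manning's theorem \cite{Ma} the linearization $\Lambda:=\rho(K_0)$ is hyperbolic, and by Franks' theorem \cite{Fr} there is a homeomorphism $H$, homotopic to the identity, with $H\Phi H^{-1}=T_{\Lambda,0}=:L$, the hyperbolic (hence ergodic) toral automorphism induced by $\Lambda$. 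Replacing $\a$ by the conjugate action $\b:=H\a(\cdot)H^{-1}$, I may assume $\b(K_0)=L$; since $H$ is homotopic to the identity each $\b(k)$ is still homotopic to $T_{\rho(k),0}$, and it remains to show that every $\b(k)$ is affine.

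The central element is treated first. Because $\mathcal{C}$ is central, $h:=\b(\mathcal{C})$ commutes with $L$ and $\rho(\mathcal{C})$ commutes with $\Lambda$. Writing a lift of $h$ as $x\mapsto\rho(\mathcal{C})x+u(x)$ with $u$ continuous and $\Z^n$-periodic, the relation $hL=Lh$ becomes the functional equation
\begin{equation*}
u(\Lambda x)=\Lambda\,u(x)\qquad(x\in\RR^n).
\end{equation*}
Decomposing $u=u^s+u^u$ along the $\Lambda$-invariant splitting $\RR^n=E^s\oplus E^u$ and iterating forward and backward, the boundedness of $u$ together with the expansion of $\Lambda$ on $E^u$ and of $\Lambda^{-1}$ on $E^s$ forces $u\equiv0$. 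Hence $h=T_{\rho(\mathcal{C}),0}$ is \emph{exactly} the linear automorphism induced by $\rho(\mathcal{C})$, with no translation part.

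For an arbitrary $k\in\H$ I would exploit the $2$-step nilpotency: the commutator $kK_0k^{-1}K_0^{-1}$ is central, so $kK_0k^{-1}=K_0\mathcal{C}^{\,m}$ for some $m=m(k)\in\Z$. Applying $\b$ gives
\begin{equation*}
\b(k)\,L\,\b(k)^{-1}=L\,h^{m}=T_{\Lambda\rho(\mathcal{C})^{m},\,0},
\end{equation*}
which is again a genuine toral automorphism (here the linearity of $h$ from the previous step is essential). Thus $\b(k)$ conjugates the ergodic automorphism $L$ to another automorphism, and by the Adler--Palais theorem \cite{AP} every such conjugating homeomorphism is affine. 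Therefore $\b(k)\in\Aff(\T^n)$ for all $k$, so $\b$ is an affine action and $\a$ is topologically conjugate, via $H$, to $\b$.

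The hard part is the noncommutativity: a general $\b(k)$ does not commute with $L$, so one cannot diagonalize it directly against $L$ and the translation part is not visible a priori. This is resolved by the two structural facts above, namely that conjugation only twists $L$ by the central element (made explicit by $2$-step nilpotency) and that this central twist has already been shown to be truly linear, so that $\b(k)$ carries one automorphism to another and Adler--Palais applies. A secondary subtlety worth flagging is that Adler--Palais by itself would permit a finite-order translation in $h$; ruling this out so that the target $Lh^{m}$ is linear is precisely what the hyperbolic functional-equation argument of the second step accomplishes.
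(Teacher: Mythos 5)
You have a genuine error in Step 2, although it turns out to be repairable. Passing to lifts, the commutation $hL=Lh$ gives only $u(\Lambda x)=\Lambda u(x)+v$ for some deck-transformation constant $v\in\Z^n$, which your functional equation omits; $v$ is well defined only modulo $(\Lambda-\Id)\Z^n$ and in general cannot be normalized to $0$ by changing lifts. Setting $c=(\Lambda-\Id)^{-1}v$ (legitimate since $\Lambda$ is hyperbolic) and running your forward/backward iteration on the bounded function $u+c$ shows $u\equiv -c$: the general solution is a \emph{constant}, so the correct conclusion is that $h=T_{\rho(\mathcal{C}),a}$ is affine with translation part $a$ satisfying $(\Lambda-\Id)a\in\Z^n$, i.e.\ $a$ is a fixed point of $L$ on the torus, not necessarily zero. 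The paper's own Example~\ref{Exam3} refutes your linearity claim: there $f=T_{A,0}$ is already a linear Anosov element (so $\beta=\alpha$ and $L=T_{A,0}$ in your normalization), while $h=T_{\Id,c}$ with $c=(2/5,4/5)$ satisfies $(A-\Id)c\in\Z^2$ and is a translation of order $5$ commuting with $L$; Theorem~D likewise asserts that $h$ is conjugate to a finite-order translation or to $T_{-\Id,c}$, which would be vacuous if $h$ were always forced to be linear. Your closing remark inverts the logic: the hyperbolic functional-equation argument does \emph{not} rule out a finite-order translation in $h$ --- that translation is exactly the cohomological obstruction $v \bmod (\Lambda-\Id)\Z^n$ that your computation dropped.

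The error is harmless to your overall strategy, because nothing downstream actually needs $h$ linear: the Adler--Palais theorem as stated in the paper (Theorem~4.1) requires only $R,S\in\Aff(\T^n)$ with $R$ ergodic, not toral automorphisms. With the corrected Step 2 ($h$ affine), $Lh^{m}$ is still affine, so the relation $\beta(k)\,L\,\beta(k)^{-1}=Lh^{m}$ places you squarely in the hypotheses of Adler--Palais and yields $\beta(k)\in\Aff(\T^n)$ as intended; alternatively, $h\in\Aff(\T^n)$ follows at once from Adler--Palais applied to $hLh^{-1}=L$, making your functional-equation computation unnecessary. So repaired, your proof is in substance the paper's: the paper conjugates the Anosov element $k=f^rg^sh^t$ to an affine ergodic $K$ by Franks--Manning and applies Adler--Palais three times, to the relations $H^{-1}KH=K$, $F^{-1}KF=KH^{-s}$ and $G^{-1}KG=KH^{r}$ satisfied by the conjugated generators; your commutator identity $kK_0k^{-1}=K_0\mathcal{C}^{m}$ is the same exploitation of $2$-step nilpotency, phrased for a general element.
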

In some cases, the form of $h$ can be completely determined as the
following theorem showed.

\begin{theoremD}
If  $f$ is a codimension $1$ Anosov diffeomorphism of a
Heisenberg group action on ${\mathbb T^n}$, then $h$ is
topologically conjugate to a translation of finite order. If $f$
is a  codimension $2$ Anosov diffeomorphism for a Heisenberg group
action on ${\mathbb T^n}$, then $h$ is either topologically
conjugate to a translation of finite order or topologically
conjugate to an affine transformation $T_{-\id, c}$ of order $2$
for some $c\in\mathbb T^n$.
\end{theoremD}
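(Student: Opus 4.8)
The plan is to use Theorem~C to linearize the problem and then pin down the integer matrix $H$ that is the linear part of $h$. First I would apply Theorem~C to conjugate the action to an affine one, writing $f=T_{F,a}$, $g=T_{G,b}$, $h=T_{H,c}$ with $F,G,H\in{\rm GL}(n,\mathbb{Z})$ satisfying $FH=HF$, $GH=HG$, $FG=GFH$, and with $F$ hyperbolic. Since Haar measure is $\alpha$-invariant, Corollary~A.1 shows every eigenvalue of $H$ has modulus $1$; as $H$ is an integer matrix, Kronecker's theorem makes all these eigenvalues roots of unity. Replacing $f$ by $f^{-1}$ if necessary (which replaces $h$ by $h^{-1}$ and leaves the conclusion unchanged), I may assume $\dim E^s_F\le\dim E^u_F$, so that $\dim E^s_F$ equals the codimension, $1$ or $2$. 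Because $T_{\id,c}$ is a translation and $T_{-\id,c}$ has order $2$, it suffices to prove $H=\id$ in codimension $1$ and $H\in\{\id,-\id\}$ in codimension $2$.

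The arithmetic input is an elementary lemma: each irreducible factor $p_j$ of the characteristic polynomial of $F$ has at least one root inside the open unit disc. Indeed its constant term is an integer dividing $\det F=\pm1$, hence $\pm1$, so the product of the moduli of its roots is $1$; as $F$ is hyperbolic none lies on the unit circle, so at least one is contracting. Writing $\dim E^s_F=\sum_j e_jc_j$, where $c_j\ge1$ counts the contracting roots of $p_j$ and $e_j$ its multiplicity, the bound $\dim E^s_F\le2$ forces the characteristic polynomial to be, in codimension $1$, a single irreducible factor with one contracting root, and in codimension $2$, either one irreducible factor with two contracting roots, two distinct irreducible factors each with one contracting root, or the square of an irreducible factor with one contracting root. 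Next I would show that $G$ preserves $E^s_F$: conjugation gives $E^s_{G^{-1}FG}=G^{-1}E^s_F$, while $G^{-1}FG=FH$ and the facts that $F,H$ commute and $H$ has unit-modulus eigenvalues make $FH$ hyperbolic with the same stable space $E^s_F$; hence $G^{-1}E^s_F=E^s_F$.

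The key step is a determinant computation on $W_\zeta:=V_\zeta\cap E^s_F$, where $V_\zeta$ is the generalized $\zeta$-eigenspace of $H$. Both $F$ and $G$ preserve $V_\zeta$ (they commute with $H$) and $E^s_F$, hence preserve $W_\zeta$; since $\det(H|_{W_\zeta})=\zeta^{\dim W_\zeta}$, taking determinants in $FG=GFH$ restricted to $W_\zeta$ gives $\zeta^{\dim W_\zeta}=1$, so the order of $\zeta$ divides $\dim W_\zeta$. As $\dim E^s_F\le2$, this forces $H|_{E^s_F}$ to have all eigenvalues equal to $1$ in codimension $1$, and all equal to a common $\eta\in\{1,-1\}$ in codimension $2$; in particular a mixed splitting $\{1,-1\}$ or a root of order $\ge3$ is impossible. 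In cases where $F$ is regular semisimple (which covers codimension $1$ and the two squarefree codimension-$2$ cases), $H$ is semisimple and $H|_{E^s_F}=\eta\,\id$. I then propagate this to the whole space: $H$ is rational and equals $\eta\,\id$ on the contracting part, and the Galois orbit of the contracting root of each factor spans that factor's isotypic component, so $H=\eta\,\id_n$. If $\eta=1$ then $h=T_{\id,c}$ is a translation, and $fh=hf$ gives $(F-\id)c\in\mathbb{Z}^n$; since $F$ is hyperbolic $F-\id$ is invertible over $\mathbb{Q}$, so $c$ is rational and $h$ has finite order. If $\eta=-1$ then $h=T_{-\id,c}$ squares to the identity, so it has order $2$.

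The hard part is excluding a nontrivial unipotent contribution to $H$. The determinant argument controls only the eigenvalues of $H|_{E^s_F}$, not its semisimplicity, and a nontrivial unipotent block can commute with $F$ exactly when $F$ has a repeated eigenvalue on $E^s_F$, i.e. in the remaining codimension-$2$ case where the characteristic polynomial is $p^2$ and $F$ carries a Jordan block. When $F$ is semisimple this is immediately ruled out, since $G^{-1}FG=FH$ forces $F$ and $FH$ to be conjugate whereas a nontrivial unipotent $H$ would render $FH$ non-semisimple; the genuinely delicate subcase is a non-semisimple $F$, where I expect to combine the two relations $G^{-1}FG=FH$ and $GH=HG$ (which together yield $G^{-1}F^kG=FH^k$ for all $k$) with integrality to force the unipotent part of $H$ to be trivial, hence $H=\eta\,\id$ and $h$ of the asserted form.
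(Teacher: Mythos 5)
Your linearization via Theorem~C, the use of Corollary~A.1 to force unit-modulus eigenvalues of $H$, the conjugation argument showing $G$ preserves $E^s_F$ (via $E^s_{FH}=E^s_F$, a slick alternative to the paper's Lemma~\ref{LThmD1}, which instead lets $A^nBv=B\mathcal{C}^nA^nv\to 0$ using polynomial growth of $\|\mathcal{C}^n\|$), the determinant computation on $W_\zeta$, and the rationality argument for finite order of the translation are all sound, and in the semisimple configurations your Galois propagation is a legitimate substitute for the paper's globalization. But there is a genuine gap, and it sits exactly where you flag it: in the codimension-$2$ case with characteristic polynomial $p^2$ and $F$ non-semisimple you only ``expect'' to kill the unipotent part of $H$; no proof is given, and the proposed tool is misstated ($G^{-1}F^kG=F^kH^k$, not $FH^k$) and points in an unnecessary direction (integrality plays no role here). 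This case cannot be waved away: integer hyperbolic matrices with a nontrivial Jordan block arise naturally in Heisenberg actions --- the generator $A$ in the paper's Example~\ref{Exam1}, block upper-triangular with copies of $X$ on the diagonal and $I_n$ above, is precisely of this type. Without closing this case the codimension-$2$ statement is unproved.

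The paper closes it by pure $2\times 2$ linear algebra over $\mathbb{R}$ inside $E^s$ (Lemma~\ref{LThmD3}), and you could graft that argument directly onto your setup: since $F$ and $H$ commute and $F|_{E^s}$ is a single Jordan block with eigenvalue $\lambda$, the eigenline $\ker(F-\lambda)$ is $H$-invariant, so pick $v$ there with $Hv=\eta v$; the relation $FG=GFH$ gives $F(Gv)=\eta\lambda\,Gv$, so $\eta\lambda$ is an eigenvalue of $F|_{E^s}$, forcing $\eta=1$ and $Gv\in\mathbb{R}v$; triangularizing $F|_{E^s}$, $G|_{E^s}$, $H|_{E^s}$ in a basis $\{v,w\}$ and comparing off-diagonal entries in $FG=GFH$ yields $\gamma\lambda z=0$, hence $H|_{E^s}=\mathrm{id}$ (the cases where $F|_{E^s}$ or $G|_{E^s}$ has distinct eigenvalues are handled separately, as in the paper's Claims~1--3). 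Note also that your globalization step is only set up for semisimple $F$ (it runs through eigenlines and their Galois conjugates), so even after settling the Jordan case on $E^s$ you would need an extra propagation argument; the paper avoids this entirely with one uniform step, namely that $[E^s]$ is dense in $\mathbb{T}^n$ (the closure is a rational $A$-invariant subtorus whose quotient would carry a purely expanding unimodular integer action), whence $\mathcal{C}x=\pm x$ on a dense set and by continuity $\mathcal{C}=\pm\mathrm{Id}$ everywhere. Adopting that density argument in place of the Galois one would make your proof complete once the Jordan subcase is carried out.
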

For the codimension $1$ case, the following example indicates that
$h$ in the above theorem can be non-trivial.

\begin{example}\label{Exam3}
Let $\disp A=\left[
  \begin{array}{cc}
  5 & 3 \\ 3 & 2
  \end{array}
  \right]$,
$\disp b=\left(
  \begin{array}{c}
  2/5  \\ 3/5
  \end{array}
  \right)$,
$\disp c=\left(
  \begin{array}{c}
  2/5  \\ 4/5
  \end{array}
  \right)$.
Define affine transformations $f, g, h$ on ${\mathbb T}^2$ by
$f([x])=[Ax]$, $g([x])=[Ax+b]$, and $h([x])=[x+c]$ for all $x\in
\mathbb R^2$. Then $fh=hf$, $gh=hg$, $fg=gfh$, and $h^5([x])=[x]$.
Thus we get a Heisenberg group action on $\mathbb T^2$ with $h$
being a translation of order~$5$.
\end{example}
The following example shows that there do exist examples such that
$h$ is conjugate to $T_{-Id, c}$ as shown in Theorem D in the
codimension $2$ case.

\begin{example}\label{Exam4}
Take $\disp X=\left[
  \begin{array}{cc}
  2 & 1 \\ 1 & 1
  \end{array}
  \right]$. Let
  $\disp A=\left[
  \begin{array}{cc}
  X & 0 \\ 0 & -X
  \end{array}
  \right]$,
$\disp B=\left[
  \begin{array}{cc}
  0 & X\\ X & 0
  \end{array}
  \right]$, and
$\disp C=\left[
  \begin{array}{cc}
  -I & 0 \\ 0 & -I
  \end{array}
  \right]$,
where $I$ is the $2\times 2$ identity matrix. For any $c\in\mathbb
R^4$, let $a=-{1\over 2}(A-I)c$ and $b=-{1\over 2}(B-I)c$. Then it
is easy to check that the affine transformations $T_{A,a}, T_{B,
b}$ and $T_{-\id,c}$ on $\mathbb T^4$ satisfy the relations
$T_{A,a}T_{-\id, c}=T_{-\id, c}T_{A,a}$, $T_{B,b}T_{-\id,
c}=T_{-\id, c}T_{B,b}$ and $T_{A,a}T_{B, b}=T_{B,
b}T_{A,a}T_{-\id, c}$. Thus we get an affine Anosov $\cal H$
action on $\mathbb T^4$ with $h$ being of the form $T_{-\id, c}$.
\end{example}

An action $\alpha$ of a finitely generated discrete group $G$ on a
manifold $M$ is $C^{k,r,\ell}$ \emph{locally rigid} if any $C^k$
perturbation $\widetilde{\alpha}$ which is sufficiently $C^r$
close to $\alpha$ on a finite generating set is $C^\ell$ conjugate
to $\alpha$; i.e., there exists a diffeomorphism $T$ of $M$ 
$C^\ell$ close to identity which conjugates $\widetilde{\alpha}$ to
$\alpha$: $T\circ\alpha(g)=\alpha'(g)\circ T$ for any $g\in G$.

An $\H$ action $\alpha$ by automorphisms on $\TT^N$ is called an
\emph{ergodic higher rank action}  if it contains two elements
$h_1,\,h_2$ such that $\alpha(h_1^mh_2^n)\in SL(N,\ZZ)$ is ergodic
for all $(m,n)\neq 0$ in $\ZZ^2$.

\begin{theoremE}
Let $\alpha$ be an ergodic higher rank $\H$ action by
automorphisms of the $N$-dimensional torus. Then there exists a
constant $l=l(\alpha,N)\in\NN$ such that $\alpha$ is
$C^{\infty,l,\infty}$ locally rigid.
\end{theoremE}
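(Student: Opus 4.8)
The plan is to prove Theorem E by a Newton--Moser (KAM) iteration, adapting to the Heisenberg setting the scheme for higher rank abelian toral actions developed in \cite{Damjanovic4} and \cite{Damjanovic3}. The starting observation is that any perturbation $\widetilde\alpha$ is again an $\H$-action, hence is completely determined by the two generators $\widetilde f=\widetilde\alpha(A)$ and $\widetilde g=\widetilde\alpha(B)$; the central element is forced by the relation in \eqref{fdefHG} to be $\widetilde h=\widetilde f^{-1}\widetilde g^{-1}\widetilde f\widetilde g$. Consequently it suffices to construct a diffeomorphism $\Omega=\id+\omega$, $C^\infty$ close to the identity, solving the two conjugacy equations $\Omega\circ\widetilde f=f\circ\Omega$ and $\Omega\circ\widetilde g=g\circ\Omega$. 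Indeed, these give $\widetilde f=\Omega^{-1}f\Omega$ and $\widetilde g=\Omega^{-1}g\Omega$, whence $\widetilde h=\Omega^{-1}(f^{-1}g^{-1}fg)\Omega=\Omega^{-1}h\Omega$, so the relation $fg=gfh$ propagates automatically and $h$ is conjugated for free.

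At each step of the iteration I would linearize these equations. Writing the unperturbed generators as the linear automorphisms $f,g$ and the perturbations as $\widetilde f=f+\rho_f$, $\widetilde g=g+\rho_g$, the first-order equations for the correction $\omega$ take the twisted-coboundary form $\omega\circ f-f\,\omega=R_f$ and $\omega\circ g-g\,\omega=R_g$, where $R_f,R_g$ are the current errors. Because $f,g,\widetilde f,\widetilde g$ all obey the Heisenberg relations, the data $R_f,R_g$ satisfy a compatibility identity: it is the analogue of the $\ZZ^2$ cocycle condition used in the abelian theory, but now \emph{twisted by the central element} $h$. Verifying and exploiting this twisted identity is exactly what makes the linearized system overdetermined-but-consistent, and hence tamely solvable.

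The analytic core is the solvability of this linearized system with a fixed loss of derivatives. Passing to Fourier series on $\TT^N$, the twisted coboundary operator for a toral automorphism $f=A$ decouples along the orbits of the dual map $A^{T}$ on $\ZZ^N$, and the small divisors are the norms of the products of $A$ along these orbits; the same holds for $g=B$ and $B^{T}$. The higher rank ergodicity hypothesis --- that $\alpha(h_1^mh_2^n)$ is ergodic for every $(m,n)\neq 0$ --- guarantees that no nonzero frequency has a jointly bounded orbit, so along every orbit there is exponential expansion in some direction at a rate bounded below. This produces a Diophantine-type lower bound on the divisors, and, combining the two equations through the twisted cocycle identity (the higher rank splitting), yields a solution $\omega$ whose Sobolev norms are controlled by those of $(R_f,R_g)$ up to a fixed number $l=l(\alpha,N)$ of lost derivatives.

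With this tame estimate in hand the iteration is standard: insert smoothing operators to truncate high frequencies, solve the linearized system for the correction, and show that the new error is quadratically smaller than the old one, the quadratic gain absorbing the fixed derivative loss $l$ via interpolation inequalities. Convergence of the infinite composition $\cdots\circ\Omega_2\circ\Omega_1$ in every $C^k$ norm then yields the $C^\infty$ conjugacy $T$, with closeness of $\widetilde\alpha$ measured in $C^l$; this is precisely $C^{\infty,l,\infty}$ local rigidity. The main obstacle, and the genuinely new point beyond the abelian case, is the central element and the relation $fg=gfh$: one must check that the twisted cocycle identity for $R_f,R_g$ holds at each stage of the Newton scheme and that the higher rank splitting of the linearized system survives this twisting, so that the commutator direction does not corrupt the tame estimate. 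Once this structural lemma is secured, the remaining convergence analysis runs parallel to the $\ZZ^2$ case.
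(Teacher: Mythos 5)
Your skeleton matches the paper's: a Damjanovic--Katok KAM scheme, linearized twisted coboundary equations analyzed via the dual action on $\ZZ^N$, lower bounds on dual-orbit growth coming from the higher rank ergodicity hypothesis, tame estimates with fixed loss of derivatives, and quadratic convergence of the iteration. Two structural choices differ, though. First, the paper reduces the whole problem to conjugating a \emph{single} ergodic generator (Lemma \ref{le:2}): once $H$ intertwines $\alpha(g)$ and $\widetilde\alpha(g)$ for one ergodic $g$, ergodicity forces $H$ to intertwine every element. Your observation (conjugate $f$ and $g$, get $h$ for free) is correct but weaker, and not where the subtlety lies. Second, and more importantly, the paper does \emph{not} work with the two equations for $f$ and $g$ alone: it keeps the center as a third unknown with its own perturbation data $R_{\mathcal C}$ and solves the system \eqref{for:11} of three equations, tied together by three linearized relations --- two from $A\mathcal{C}=\mathcal{C}A$, $B\mathcal{C}=\mathcal{C}B$, and one from $AB=BA\mathcal{C}$, the last of which explicitly contains $R_{\mathcal C}$. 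In your formulation $R_{\mathcal C}$ is hidden inside the nonlinear commutator $\widetilde f^{-1}\widetilde g^{-1}\widetilde f\widetilde g$, so your ``compatibility identity twisted by $h$'' is never actually written down, and it is not clear it has a usable linear form at all.

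That points to the genuine gap: your proof rests on the sentence ``once this structural lemma is secured, the remaining convergence analysis runs parallel to the $\ZZ^2$ case'' --- but that structural lemma \emph{is} the theorem, and you give no argument for it. In the paper it occupies essentially all of Section \ref{S5}: (i) an orbit-growth estimate for the noncommutative words $A^{k_1}B^{k_2}$ acting on $\ZZ^N$ (Lemma \ref{le:110}), which is not a citation to the abelian case but a new statement whose proof passes through the Lyapunov decomposition of Theorem A; (ii) the identification of the obstructions to the twisted equation $P\omega-\omega\circ Q=\theta$ and its tame solvability when they vanish (Lemmas \ref{le:8} and \ref{le:5}); and (iii), hardest of all, the projection lemma (Lemma \ref{le:4}) splitting the perturbation into an exactly solvable part plus an error controlled, with fixed loss of regularity, by the cocycle defects $R_1,R_2,R_3$ of Lemma \ref{le:3}. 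The proof of (iii) is where the Heisenberg twisting is actually confronted: the obstruction sums for the $A\mathcal{C}$-equation are converted into sums over triple orbits $\mathcal{C}^jA^iB^kn$ using the commutation rule $B(A\mathcal{C})^i=A^iB$, and one must then verify a polynomial lower bound for the dual action on at least one of two unions of half-spaces of $(j,i,k)$, by a case analysis on which Lyapunov block of $A$ the minimal frequency $n_{\min}$ mostly lies in. None of this is ``parallel to the $\ZZ^2$ case''; it is precisely the new content needed for the Heisenberg group, and your proposal defers exactly this step, so it cannot be counted as a proof.
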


\section{Lyapunov exponents: Proof of Theorem A and its corollaries}\label{S2}
\setcounter{equation}{0}

Since $g$ and $h$ are commuting maps, by Theorem A in \cite{Hu}
there exists a measurable set $\Gamma_0$ with
$g^sh^t\Gamma_0=\Gamma_0$\ $\forall s, t\in\mathbb Z$,  and
$\mu\Gamma=1$, $\forall \mu\in{\cal M}(M, g, h)$, such that for all
$x\in \Gamma_0$, there is a (unique) decomposition of the tangent
space into
\begin{equation}\label{fSA01}
T_xM=\bigoplus_{j=1}^{r(x,g)}\bigoplus_{k=1}^{r(x,h)}E_{jk}(x)
\end{equation}
such that for all $s, t\in\mathbb Z$ with $E_{jk}(x)\not=0$,
for all  $0\not=u\in E_{jk}(x)$,
\begin{equation}\label{fSA02}
\lim\limits_{n\to\infty}\frac{1}{n}{\rm log}\parallel
D(g^sh^t)_x^nu\parallel=s\lambda_j(x,g)+t\lambda_k(x,h).
\end{equation}
Moreover,
\begin{equation*}
D(g^sh^t)_x(E_{jk}(x))=E_{jk}(g^sh^tx)
\end{equation*}
and
\begin{equation*}
\lambda_j(g^sh^tx, g)=\lambda_j(x, g), \quad
\lambda_k(g^sh^tx, h)=\lambda_k(x, h).
\end{equation*}

Let $\Gamma_1=\cap_{i\in\mathbb Z}f^i\Gamma_0$. By \eqref{fdefHG}
it is easy to see that $f\Gamma_1=\Gamma_1$,
$g\Gamma_1=\Gamma_1$ and $h\Gamma_1=\Gamma_1$. So $\Gamma_1$ is an
$\H$-invariant measurable set and $\mu\Gamma_1=1$ \
$\forall\mu\in{\cal M}(M, \a)$.

\begin{lemma}\label{LThmA1}
For all $x\in \Gamma_1$ and all $0\not=u\in E_{jk}(x)$, we have
$$
\chi(fx, Df_xu, g^{\pm})=\pm\lambda_j(x, g)\mp\lambda_k(x, h),\ \
\chi(fx, Df_xu, h^{\pm})=\pm\lambda_k(x, h).
$$
\end{lemma}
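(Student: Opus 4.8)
The plan is to exploit the commutation relations \eqref{fdefHG} to transport the Lyapunov decomposition at $x$ to one at $fx$, and then read off the exponents of $g^{\pm}$ and $h^{\pm}$ from the decomposition already established for the commuting pair $(g,h)$ in \eqref{fSA01}–\eqref{fSA02}. The key algebraic inputs are $fg=gfh$, equivalently $gf=fgh^{-1}$, together with $fh=hf$. First I would iterate these to obtain the conjugation formulas $f^{-1}g^sh^tf=g^sh^{t-s}$ (and its inverse $fg^sh^tf^{-1}=g^sh^{t+s}$) as elements of $\H$; these follow by induction from the single relation $fg=gfh$ and the centrality of $h$. Applying $\alpha$, this yields the operator identity $f^{-1}\circ(g^sh^t)\circ f=g^sh^{t-s}$ on $M$, so that $D(g^sh^t)^n$ at $fx$ is intertwined with $D(g^sh^{t-s})^n$ at $x$ by the fixed linear maps $Df_x$ and $(Df_x)^{-1}$.

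Next I would compute the Lyapunov exponent directly. Take $0\neq u\in E_{jk}(x)$ and set $v=Df_xu\in T_{fx}M$. Using the intertwining identity above,
\begin{equation*}
D(g^sh^t)^n_{fx}\,v=Df_{(g^sh^{t-s})^n x}\circ D(g^sh^{t-s})^n_x\,u.
\end{equation*}
Because $Df$ is a continuous bundle map over a compact manifold, its norm is uniformly bounded above and below away from zero, so applying $Df$ contributes nothing to the $\tfrac1n\log$ limit. Hence
\begin{equation*}
\chi(fx,v,g^sh^t)=\lim_{n\to\infty}\tfrac1n\log\|D(g^sh^{t-s})^n_x u\|
=s\lambda_j(x,g)+(t-s)\lambda_k(x,h),
\end{equation*}
where the last equality is exactly \eqref{fSA02} applied to $u\in E_{jk}(x)$ with the exponents $(s,t)$ replaced by $(s,t-s)$. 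Specializing $(s,t)=(\pm1,0)$ gives $\chi(fx,Df_xu,g^{\pm})=\pm\lambda_j(x,g)\mp\lambda_k(x,h)$, and $(s,t)=(0,\pm1)$ gives $\chi(fx,Df_xu,h^{\pm})=\pm\lambda_k(x,h)$, which is the assertion of the lemma.

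I expect the main obstacle to be a bookkeeping one rather than a conceptual one: one must verify that the limit defining $\chi(fx,v,\cdot)$ genuinely exists for the specific vector $v=Df_xu$ and is not merely an upper Lyapunov exponent. This is where $\Gamma_1=\cap_i f^i\Gamma_0$ does its work — since $x\in\Gamma_1$ forces $fx\in\Gamma_0$, the vector $v$ lies in the tangent space at a regular point $fx$, where the decomposition \eqref{fSA01} and the exponents \eqref{fSA02} are all defined; combined with the $f$-invariance relations for $\lambda_j(\cdot,g)$ and $\lambda_k(\cdot,h)$ recorded above \eqref{fSA02}, the limit exists and equals the value computed. The only care needed is to confirm that $Df_xu$ need not be a pure Lyapunov vector for $g^sh^t$ at $fx$, but that the intertwining identity bypasses this by reducing the computation entirely to the already-regular point $x$; the uniform two-sided bound on $\|Df\|$ then guarantees the conjugating diffeomorphism does not perturb the exponential growth rate.
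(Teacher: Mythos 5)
Your proposal is correct and takes essentially the same route as the paper: the paper's proof also moves $Df$ across the iterates via the relations $g^{\pm n}f=fg^{\pm n}h^{\mp n}$ and $h^{\pm n}f=fh^{\pm n}$ (the specializations of your conjugation formula $f^{-1}g^sh^tf=g^sh^{t-s}$), bounds the single factor $Df$ above and below by a constant independent of $n$ using compactness of $M$, and then applies \eqref{fSA02} at the regular point $x$. Your closing observation — that the intertwining reduces everything to $x\in\Gamma_0$, so the limit exists without needing $Df_xu$ to be a pure Lyapunov vector at $fx$ — is exactly how the paper's squeeze between $c^{-1}\|D(g^{\pm 1}h^{\mp 1})^n_xu\|$ and $c\|D(g^{\pm 1}h^{\mp 1})^n_xu\|$ settles the same point.
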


\begin{proof}
By \eqref{fdefHG} we have
$$
\parallel Dg^{\pm n}Df_xu\parallel
=\parallel DfDg^{\pm n}Dh_x^{\mp n}u\parallel,\ \
\parallel Dh^{\pm n}Df_xu\parallel=\|DfDh_x^{\pm n}u\|.
$$
So there is a constant $c>0$ such that for all $n\geq 0$,
\begin{equation*}
\begin{split}
  c^{-1}\|Dg^{\pm n}Dh_x^{\mp n}u\|
\leq& \|Dg^{\pm n}Df_x u\|
\leq c\|Dg^{\pm n}Dh_x^{\mp n}u\|,  \\
  c^{-1}\|Dh_x^{\pm n}u\|
\leq& \|Dh^{\pm n}Df_xu\|
\leq c\|Dh_x^{\pm n}u\|.
\end{split}
\end{equation*}
Then by \eqref{fSA02} we get that
\begin{equation}\label{fThmAL1}
\begin{split}
 \chi(fx, Df_xu, g^{\pm 1})
=&\lim\limits_{n\to \infty}\frac{1}{n}\log\| Dg^{\pm n}Df_xu\|
=\pm\lambda_j(x, g)\mp\lambda_k(x, h),  \\
\chi(fx, Df_xu, h^{\pm 1})
=&\lim\limits_{n\to\infty}\frac{1}{n}\log\|Dh^{\pm n}Df_xu\|
=\pm\lambda_k(x, h),
\end{split}
\end{equation}
which are what we need.
\end{proof}

\begin{lemma}\label{LThmA2}
For any $x\in \Gamma_1$, any $(j,
k)$ with $E_{j, k}\not=0$ and any $0\not=u\in E_{j, k}$, we have
$\chi(x, u, h)=0$.
\end{lemma}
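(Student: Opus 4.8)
The plan is to exploit the Heisenberg commutation relation \eqref{fdefHG} to write a high power of $h$ as a product of only \emph{four} powers of $f^{\pm1},g^{\pm1}$, and then use compactness of $M$ to see that this power grows far too slowly to carry a nonzero exponent. First I note that \eqref{fSA02} (with $s=0$, $t=1$) already gives $\chi(x,u,h)=\lambda_k(x,h)$ for any $0\neq u\in E_{jk}(x)$, so the assertion is equivalent to $\lambda_k(x,h)=0$. The algebraic input is that $fg=gfh$ with $h$ central forces, after the $n^2$ elementary transpositions needed to move $f^n$ past $g^n$,
$$
f^n g^n=g^n f^n h^{n^2},\qquad\text{hence}\qquad h^{n^2}=f^{-n}g^{-n}f^n g^n .
$$

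Next I would estimate the differential. Since $M$ is compact and $f,g\in\Diff^r(M)$ with $r>1$, the sup-norms $\|Df\|_\infty,\|Df^{-1}\|_\infty,\|Dg\|_\infty,\|Dg^{-1}\|_\infty$ are finite, so that $\|Df^{\pm n}_y\|\le\|Df^{\pm1}\|_\infty^{\,n}$ and likewise for $g$, uniformly in the base point $y$. Applying the chain rule to the expression $h^{n^2}=f^{-n}g^{-n}f^n g^n$ produces a constant $C=\|Df\|_\infty\|Df^{-1}\|_\infty\|Dg\|_\infty\|Dg^{-1}\|_\infty\ge 1$ with $\|Dh^{n^2}_x u\|\le C^{\,n}\|u\|$ for all $n\ge1$. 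Because the limit defining $\chi(x,u,h)$ exists for $x\in\Gamma_1$, I may evaluate it along the subsequence $N=n^2$ and obtain
$$
\lambda_k(x,h)=\chi(x,u,h)=\lim_{n\to\infty}\frac{1}{n^2}\log\|Dh^{n^2}_x u\|
\le \lim_{n\to\infty}\frac{n\log C+\log\|u\|}{n^2}=0 .
$$

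Finally, to promote $\lambda_k(x,h)\le 0$ to equality I run the identical estimate for $h^{-1}$, using $h^{-n^2}=g^{-n}f^{-n}g^n f^n$, which yields $\chi(x,u,h^{-1})\le 0$. Since the splitting \eqref{fSA01} refines the Oseledets decomposition of $h$, the vector $u$ lies in a single Oseledets subspace of $h$, so $\chi(x,u,h^{-1})=-\chi(x,u,h)$; combining the two inequalities gives $\chi(x,u,h)\ge 0$ as well, whence $\chi(x,u,h)=\lambda_k(x,h)=0$, as claimed. The one conceptual point that must be handled correctly is the superlinear bookkeeping: the natural object is $h^{n^2}$ rather than $h^n$, so that the $O(n)$ growth contributed by the four factors is divided by $n^2$ and disappears in the limit. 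Everything else is routine compactness together with the two-sided Oseledets relation $\chi(x,u,h^{-1})=-\chi(x,u,h)$ on a single Lyapunov subspace.
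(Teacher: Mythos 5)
Your proof is correct, but it follows a genuinely different route from the paper's. The paper proves this lemma by contradiction using Lemma \ref{LThmA1}: if $\lambda_{k_0}(x_0,h)\neq 0$ for some $0\neq u_0\in E_{j_0k_0}(x_0)$, then each push-forward by $f$ shifts the $g$-exponent by $-\lambda_{k_0}(x_0,h)$ while preserving the $h$-exponent, so along the orbit $f^nx_0$ one finds nonzero subspaces whose $g$-exponents equal $\lambda_{j_0}(x_0,g)-n\lambda_{k_0}(x_0,h)$, and these blow up as $n\to\infty$, contradicting the uniform bound on all $g$-exponents coming from $\sup_{x\in M}\|Dg_x\|<\infty$. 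You instead work directly with the group-commutator identity $h^{n^2}=f^{-n}g^{-n}f^ng^n$: compactness gives $\|Dh^{n^2}_xu\|\le C^n\|u\|$, the limit defining $\chi(x,u,h)$ exists by \eqref{fSA02}, so evaluating along the subsequence $N=n^2$ yields $\lambda_k(x,h)\le 0$, and the symmetric estimate for $h^{-n^2}=g^{-n}f^{-n}g^nf^n$ together with $\chi(x,u,h^{-1})=-\chi(x,u,h)$ gives equality. (Your appeal to Oseledets theory for that last relation is fine but unnecessary: it is exactly \eqref{fSA02} with $(s,t)=(0,-1)$, so no extra input is needed.) Your argument is more elementary and self-contained; it bypasses Lemma \ref{LThmA1} entirely (though that lemma is still needed later, in Proposition \ref{PThmA3}), and it is in fact closer in spirit to the paper's proof of Theorem A itself, which exploits the same identity $f^{sn}g^{tn}=g^{tn}f^{sn}h^{stn^2}$. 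It also has a quantitative byproduct that the paper's proof of the lemma does not: the bound $\|Dh^{n^2}_x\|\le C^n$ holds at \emph{every} $x\in M$, and interpolating between consecutive squares gives $\|Dh^{N}_xv\|\le \mathrm{const}\cdot e^{\e\sqrt{N}}\|v\|$ for a fixed $\e>0$, which settles the literal first part of the Question in Section \ref{S1} (though the authors presumably intend arbitrarily small $\e$ there). What the paper's route buys is economy within its own architecture: it reuses the exponent bookkeeping of Lemma \ref{LThmA1}, which must be established anyway for the subsequent propositions.
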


\begin{proof}
Assume to the contrary that there is some
$x_0\in\Gamma_1$ and some $(j_0, k_0)$ with $E_{j_0, k_0}\not=0$
and some $0\not=u_0\in E_{j_0, k_0}$ such that $\chi(x_0, u_0,
h)\not=0$. From Lemma~\ref{LThmA1} we have
\begin{equation*}
\begin{split}
  \chi(fx_0, Df_{x_0}u_0, g)
=&\lambda_{j_0}(x_0, g)-\lambda_{k_0}(x_0, h), \\
  \chi(fx_0, Df_{x_0}u_0, h^{\pm 1})
=&\pm \lambda_{k_0}(x_0, h).
\end{split}
\end{equation*}
It follows that in the decomposition
$$
T_{fx_0}M
=\bigoplus_{j=1}^{r(fx_0, g)}\bigoplus_{k=1}^{r(fx_0,h)}E_{jk}(fx_0),
$$
there is some $E_{j_1k_1}(fx_0)\not=0$ such that for all
$0\not=u\in E_{j_1k_1}(fx_0)$,
\begin{equation*}
\begin{split}
\chi(fx_0, u, g)=&\lambda_{j_0}(x_0, g)-\lambda_{k_0}(x_0,h), \\
\chi(fx_0, u, h)=&\lambda_{k_0}(x_0, h).
\end{split}
\end{equation*}
Then by induction process, we get that in the decomposition
$$
T_{f^nx_0}M
=\bigoplus_{j=1}^{r(f^nx_0,g)}\bigoplus_{k=1}^{r(f^nx_0, h)}E_{jk}(f^nx_0),
$$
there is some $E_{j_nk_n}\not=0$ such that for all $0\not=u\in E_{j_nk_n}$,
\begin{equation*}
\begin{split}
\chi(f^nx_0, u, g)
=& \lambda_{j_0}(x_0, g)-n\lambda_{k_0}(x_0,h), \\
\chi(f^nx_0, u, h)
=&\lambda_{k_0}(x_0, h).
\end{split}
\end{equation*}
Since
$\lambda_{k_0}(x_0, h)=\chi(x_0, u_0, h)\not=0,$\  $|\chi(f^nx_0,
u, g)|\rightarrow\infty$ as $n\rightarrow\infty$, contradicting
to the fact that $|\chi(p, v, g)|\leq\sup_{x\in M}\|Dg_x\|<\infty$\
$\forall p\in \Gamma_1$, $\forall v\in T_pM$.
\end{proof}

\begin{proposition}\label{PThmA3}
For all $x\in \Gamma_1$, there is a decomposition of the tangent space
into $T_xM=\bigoplus_{j=1}^{r(x, g)}E_j(x, g)$
such that for all $0\not=u\in E_j(x, g)$,

\begin{enumerate}
\item[{\rm (i)}]
the spectrum $\{\lambda_j(x, g),\ m_j(x, g),\ j=1, \cdots, r(x, g)\}$
is $\H$-invariant;

\item[{\rm (ii)}]
$D(f^sg^th^r)_xE_j(x, g)=E_j(f^sg^th^rx, g)$ \
$\forall j=1, \cdots, r(x, g)$,\ $\forall\; s, t, r\in\mathbb Z$;

\item[{\rm (iii)}]
$\disp \lim\limits_{n\rightarrow\infty}\frac{1}{n}\log\|D(g^th^r)_x^nu\|
=t\lambda_j(x, g)$ \ $\forall \; t, r\in\mathbb Z$.
\end{enumerate}
\end{proposition}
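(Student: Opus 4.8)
The plan is to deduce everything from the two lemmas already in hand, since the proposition is really the assertion that, once the central direction is seen to be neutral, the joint $(g,h)$-decomposition of \eqref{fSA01} collapses onto the pure $g$-Oseledets decomposition, and that this collapsed decomposition is carried correctly by $Df$.

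First I would use Lemma~\ref{LThmA2} to eliminate the $k$-index. Every nonzero $E_{jk}(x)$ has $\chi(x,u,h)=\lambda_k(x,h)=0$, and the $h$-exponents $\lambda_k(x,h)$ are mutually distinct, so there is exactly one value $k_0$ of $k$, namely the one with $\lambda_{k_0}(x,h)=0$, for which $E_{jk}(x)\neq\{0\}$. Hence for each $j$ the $g$-Oseledets space is the single joint space $E_j(x,g):=\bigoplus_k E_{jk}(x)=E_{jk_0}(x)$, and $T_xM=\bigoplus_{j=1}^{r(x,g)}E_j(x,g)$. Property (iii) is then immediate from \eqref{fSA02}: for $0\neq u\in E_j(x,g)=E_{jk_0}(x)$ one gets $\lim_n\frac1n\log\|D(g^th^r)_x^nu\|=t\lambda_j(x,g)+r\lambda_{k_0}(x,h)=t\lambda_j(x,g)$.

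The main step, and where I expect the real obstacle, is to show that $Df_x$ carries each $E_j(x,g)$ onto a single $g$-Oseledets space at $fx$ with the same exponent: a bound on the top exponent alone is not enough, since the hard part is to pin the image into one space rather than merely control its fastest component. I would run a forward--backward squeeze using Lemma~\ref{LThmA1}. For $0\neq u\in E_j(x,g)$ the collapse $\lambda_{k_0}(x,h)=0$ turns \eqref{fThmAL1} into $\chi(fx,Df_xu,g^{+1})=\lambda_j(x,g)$ and $\chi(fx,Df_xu,g^{-1})=-\lambda_j(x,g)$. Decomposing $Df_xu=\sum_{j'}v_{j'}$ along $T_{fx}M=\bigoplus_{j'}E_{j'}(fx,g)$ (valid since $fx\in\Gamma_1$), the forward exponent equals $\max\{\lambda_{j'}(fx,g):v_{j'}\neq0\}$ and the backward exponent equals $-\min\{\lambda_{j'}(fx,g):v_{j'}\neq0\}$; the two identities force this maximum and minimum to coincide with $\lambda_j(x,g)$, so $Df_xu$ lies in the single space $E_{j'}(fx,g)$ with $\lambda_{j'}(fx,g)=\lambda_j(x,g)$. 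Thus $Df_xE_j(x,g)\subseteq E_{j'}(fx,g)$, the index $j'$ being determined by matching exponent values.

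Finally I would upgrade this inclusion to the full equivariance (ii) and invariance (i). Because the $\lambda_j(x,g)$ are distinct, distinct indices $j$ land in distinct target spaces; injectivity of $Df_x$ together with $\sum_j\dim E_j(x,g)=\dim M=\sum_{j'}\dim E_{j'}(fx,g)$ forces each inclusion to be an equality and the exponent multisets (with multiplicities) at $x$ and $fx$ to agree. Listing exponents in decreasing order makes the matching index-preserving, so $Df_xE_j(x,g)=E_j(fx,g)$ with $\lambda_j(fx,g)=\lambda_j(x,g)$ and $m_j(fx,g)=m_j(x,g)$. Combining this $f$-equivariance with the $(g,h)$-equivariance already contained in \eqref{fSA01}--\eqref{fSA02}, which collapses to $D(g^th^r)_xE_j(x,g)=E_j(g^th^rx,g)$, and composing along a word in the generators via the chain rule, yields $D(f^sg^th^r)_xE_j(x,g)=E_j(f^sg^th^rx,g)$ for all $s,t,r\in\mathbb Z$; this is (ii), and the accompanying equality of exponents and multiplicities along the whole $\H$-orbit is exactly the $\H$-invariance asserted in (i).
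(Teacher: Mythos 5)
Your proposal is correct and follows essentially the same route as the paper: collapse the joint $(g,h)$-decomposition of \eqref{fSA01} via Lemma~\ref{LThmA2} (so $r(x,h)=1$ and $E_j(x,g)=E_{j1}(x)$), then use the specialized form of \eqref{fThmAL1} from Lemma~\ref{LThmA1}, namely $\chi(fx,Df_xu,g^{\pm1})=\pm\lambda_j(x,g)$, to obtain (i) and (ii), with (iii) coming directly from \eqref{fSA02}. The only difference is that you spell out the forward--backward pinning and the injectivity/dimension-count step that the paper compresses into ``So (i) and (ii) holds,'' which is a faithful elaboration rather than a different argument.
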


\begin{proof}
By Lemma~\ref{LThmA2}, we get that the number
$r(x, h)=1$ and the decomposition~\eqref{fSA01} becomes that,
for all $x\in \Gamma_1$,
$$
T_xM=\bigoplus_{j=1}^{r(x, g)}E_j(x, g), 
$$
where $E_j(x, g)=\bigoplus_{k=1}^{r(x, h)}E_{jk}(x)=E_{j1}(x)$.
It follows from Lemma~\ref{LThmA2} again that equations~\eqref{fThmAL1}
become
\begin{equation*}
\begin{split}
\chi(fx, Df_xu, g^{\pm 1})=&\pm\lambda_j(x, g),  \\
\chi(fx, Df_xu, h^{\pm 1})=&0.
\end{split}
\end{equation*}
So (i) and (ii) holds, and (iii) follows from \eqref{fSA02}.
\end{proof}

Exchanging $f$ and $g$, we can get the following proposition
similarly:

\begin{proposition}\label{PThmA4}
There is an $\H$-invariant measurable set $\Gamma_2$ with $\mu \Gamma_2=1$, \
$\forall \mu\in {\cal {M}}(M, \a)$ such that for all $x\in\Gamma_2$ there is
a decomposition of the tangent space into
$T_x=\bigoplus_{i=1}^{r(x, f)}E_i(x, f)$
satisfying that for every $0\not=u\in E_i(x, f)$,
\begin{enumerate}
\item[{\rm (i)}]
the spectrum $\{\lambda_i(x, f),\ m_i(x, f),\ i=1,
\cdots, r(x, f)\}$ is  $\H$-invariant;

\item[{\rm (ii)}]
$D(f^sg^th^r)_xE_i(x, f)=E_i(f^sg^th^rx, f)$\;
$\forall i=1, \cdots, r(x, f)$, $\forall\; s, t, r\in\mathbb Z$;

\item[{\rm (iii)}]
$\disp \lim\limits_{n\rightarrow\infty}\frac{1}{n}\log\|D(f^sh^r)_x^nu\|
=s\lambda_i(x, f)$, \ $\forall\; s, r\in\mathbb Z$.
\end{enumerate}
\end{proposition}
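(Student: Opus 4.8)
The plan is to run the proof of Proposition~\ref{PThmA3} essentially verbatim with the roles of $f$ and $g$ interchanged, the one point of care being that the defining relation $fg=gfh$ is \emph{not} symmetric under this exchange. First I would apply Theorem~A of \cite{Hu} to the commuting pair $(f,h)$ (they commute by \eqref{fdefHG}), obtaining a measurable set $\Gamma_0'$ with $f^sh^t\Gamma_0'=\Gamma_0'$ for all $s,t\in\mathbb Z$ and $\mu\Gamma_0'=1$ for all $\mu\in\mathcal{M}(M,f,h)$, together with a decomposition $T_xM=\bigoplus_{i=1}^{r(x,f)}\bigoplus_{k=1}^{r(x,h)}E_{ik}'(x)$ on which $\lim_{n\to\infty}\frac1n\log\|D(f^sh^t)_x^nu\|=s\lambda_i(x,f)+t\lambda_k(x,h)$. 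Intersecting over the $g$-orbit, $\Gamma_2:=\Gamma_1'=\cap_{i\in\mathbb Z}g^i\Gamma_0'$, and using \eqref{fdefHG} exactly as in the passage preceding Lemma~\ref{LThmA1}, produces an $\H$-invariant set of full measure for every $\mu\in\mathcal{M}(M,\a)$.

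Next I would prove the analogue of Lemma~\ref{LThmA1}. The arithmetic point to watch is that from $fg=gfh$ one obtains $f^{\pm n}g=gf^{\pm n}h^{\pm n}$, where the exponent of $h$ carries the \emph{same} sign as that of $f$, in contrast with the $h^{\mp n}$ appearing in Lemma~\ref{LThmA1}; this sign flip is exactly the statement that swapping $f$ and $g$ replaces the central generator $\mathcal{C}$ by its inverse. Feeding this relation into the same chain of norm comparisons as in Lemma~\ref{LThmA1} (together with the central relation $h^{\pm n}g=gh^{\pm n}$) yields, for every $0\neq u\in E_{ik}'(x)$,
\[
\chi(gx,Dg_xu,f^{\pm})=\pm\lambda_i(x,f)\pm\lambda_k(x,h),\qquad
\chi(gx,Dg_xu,h^{\pm})=\pm\lambda_k(x,h).
\]

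With this in hand the argument of Lemma~\ref{LThmA2} applies with no change: if some $u_0\in E_{i_0k_0}'(x_0)$ had $\lambda_{k_0}(x_0,h)\neq0$, then iterating $g$ and reading off $f$-exponents would force $|\chi(g^nx_0,\cdot,f)|=|\lambda_{i_0}(x_0,f)+n\lambda_{k_0}(x_0,h)|\to\infty$, contradicting the uniform bound $|\chi(p,v,f)|\le\sup_{x\in M}\|Df_x\|<\infty$. Hence all $h$-exponents vanish on $\Gamma_1'$, so $r(x,h)=1$ and the decomposition collapses to $T_xM=\bigoplus_{i=1}^{r(x,f)}E_i(x,f)$ with $E_i(x,f)=E_{i1}'(x)$. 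Substituting $\lambda_k(x,h)=0$ into the displayed identities gives $\chi(gx,Dg_xu,f^{\pm})=\pm\lambda_i(x,f)$ and $\chi(gx,Dg_xu,h^{\pm})=0$, from which parts (i)--(iii) follow exactly as in Proposition~\ref{PThmA3}: the $f$-spectrum is preserved by $g$ (and trivially by $f,h$), the subbundles are $D(f^sg^th^r)$-invariant, and the Lyapunov formula for $D(f^sh^r)_x^n$ holds.

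The only genuine obstacle here is bookkeeping rather than ideas: because the Heisenberg relation is asymmetric in $f$ and $g$, one cannot literally quote Proposition~\ref{PThmA3} and must verify that the sign change from $h^{\mp n}$ to $h^{\pm n}$ is harmless. It is harmless because the contradiction driving Lemma~\ref{LThmA2} only uses that a nonzero $h$-exponent produces \emph{unbounded} $f$-exponents along the $g$-orbit, and unboundedness is insensitive to the sign; every downstream conclusion then depends only on the vanishing of the $h$-exponents, which is sign-free.
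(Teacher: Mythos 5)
Your proposal is correct and takes exactly the route the paper intends: the paper's entire proof of Proposition~\ref{PThmA4} is the single remark that one obtains it by ``exchanging $f$ and $g$'' in the proof of Proposition~\ref{PThmA3}. Your extra verification that the asymmetry of the relation $fg=gfh$ only flips the sign of the central exponent ($f^{\pm n}g=gf^{\pm n}h^{\pm n}$ versus $g^{\pm n}f=fg^{\pm n}h^{\mp n}$), and that the contradiction argument of Lemma~\ref{LThmA2} uses only the resulting unboundedness of the $f$-exponents along the $g$-orbit and is therefore insensitive to this sign, is precisely the detail the paper leaves implicit.
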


\begin{proposition}\label{PThmA5}
There is an $\H$-invariant measurable set
$\Gamma_3\subset \Gamma_1\cap \Gamma_2$ with $\mu \Gamma_3=1$\
$\forall \mu\in {\cal M}(M, \a)$ such that for
all $x\in \Gamma_3$ there is a decomposition of the tangent space into
$$
T_xM=\bigoplus_{i=1}^{r(x, f)}\bigoplus_{j=1}^{r(x, g)}E_{ij}(x)
$$
satisfying that if $E_{ij}(x)\not=0$, then for all
$0\not=u\in E_{ij}(x)$ and all $s, t, r\in\mathbb Z$,

\begin{enumerate}
\item[{\rm (i)}]
$\disp \lim_{n\to\infty}\frac{1}{n}\log\|D(f^sh^r)_x^nu\|=s\lambda_i(x, f)$,
$\disp \lim_{n\to\infty}\frac{1}{n}\log\|D(g^th^r)_x^nu\|=t\lambda_j(x, g)$;

\item[{\rm (ii)}]
$D(f^sg^th^r)_xE_{ij}(x)=E_{ij}(f^sg^th^rx)$;

\item[{\rm (iii)}]
$\lambda_i(f^sg^th^rx, f)=\lambda_i(x, f)$, \
$\lambda_j(f^sg^th^rx, g)=\lambda_j(x, g)$.
\end{enumerate}
\end{proposition}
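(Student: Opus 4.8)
The plan is to build the refined decomposition by intersecting the two splittings already produced: I set $E_{ij}(x)=E_i(x,f)\cap E_j(x,g)$, where $E_i(\cdot,f)$ comes from Proposition~\ref{PThmA4} and $E_j(\cdot,g)$ from Proposition~\ref{PThmA3}. Once I know these intersections assemble into a genuine direct-sum decomposition of $T_xM$, properties (i)--(iii) follow almost immediately. Part (i) is inherited from part (iii) of Propositions~\ref{PThmA3} and~\ref{PThmA4}, since $E_{ij}(x)$ sits inside both $E_i(x,f)$ and $E_j(x,g)$. Part (ii) holds because $D(f^sg^th^r)_x$ is a linear isomorphism that carries $E_i(x,f)$ onto $E_i(f^sg^th^rx,f)$ and $E_j(x,g)$ onto $E_j(f^sg^th^rx,g)$ by the parts (ii) of the two propositions; a bijection takes the intersection of two subspaces exactly onto the intersection of their images, giving $D(f^sg^th^r)_x E_{ij}(x)=E_{ij}(f^sg^th^rx)$. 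Part (iii) is precisely the $\H$-invariance of the two spectra, parts (i) of the two propositions, combined with the strictly decreasing ordering of the exponents, which forces both $r(x,f)=r(f^sg^th^rx,f)$ and the index-by-index equalities $\lambda_i(f^sg^th^rx,f)=\lambda_i(x,f)$, and likewise for $g$.

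The crux, and the step I expect to be the main obstacle, is to show that for each fixed $i$ one has $E_i(x,f)=\bigoplus_j\big(E_i(x,f)\cap E_j(x,g)\big)$, for then summing over $i$ and using $T_xM=\bigoplus_i E_i(x,f)$ yields $T_xM=\bigoplus_{i,j}E_{ij}(x)$. The key observation is that $E_i(\cdot,f)$ is invariant under $Dg$: taking $s=r=0$, $t=1$ in Proposition~\ref{PThmA4}(ii) gives $Dg_x E_i(x,f)=E_i(gx,f)$, and by the same token $E_i(\cdot,f)$ is $Dh$-invariant. I would therefore restrict the $Dg$-cocycle to the measurable invariant subbundle $E_i(\cdot,f)$ and invoke the Oseledets multiplicative ergodic theorem (equivalently, apply Theorem~A of \cite{Hu} to the commuting pair $(Dg,Dh)$ restricted to $E_i(\cdot,f)$, noting that the $h$-exponents all vanish there). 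This produces the $g$-Lyapunov decomposition of $E_i(\cdot,f)$ into Oseledets subspaces whose exponents form a subset of $\{\lambda_j(x,g)\}$. By uniqueness of the Oseledets subspaces in the ambient space, the piece of $E_i(x,f)$ carrying $g$-exponent $\lambda_j(x,g)$ must coincide with $E_i(x,f)\cap E_j(x,g)$, which is exactly the desired refinement.

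Finally I would record the bookkeeping. Let $\Gamma_3$ be the subset of $\Gamma_1\cap\Gamma_2$ on which both splittings of Propositions~\ref{PThmA3} and~\ref{PThmA4} exist together with the refinement identity above. Since $\Gamma_1$ and $\Gamma_2$ are $\H$-invariant of full measure, and the refinement identity is preserved by each $D(f^sg^th^r)_x$ through the same equivariance used for (ii), the set $\Gamma_3$ is again $\H$-invariant, measurable, and satisfies $\mu\Gamma_3=1$ for all $\mu\in{\cal M}(M,\a)$. I do not anticipate trouble here beyond checking measurability of the intersection bundles $E_i(\cdot,f)\cap E_j(\cdot,g)$, which is routine given the measurability of the two families. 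The role of Lemma~\ref{LThmA2} is essential but already absorbed: it guarantees $r(x,h)=1$, so the two families being merged are genuinely the $f$- and $g$-splittings, with no residual $h$-direction left to reconcile.
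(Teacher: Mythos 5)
Your proposal is correct and takes essentially the same route as the paper: the paper also builds the joint splitting by restricting one generator's derivative cocycle to the invariant Oseledets bundles of the other (arguing as in Proposition~2.3 of \cite{Hu}), identifies $E_{ij}(x)=E_i(x,f)\cap E_j(x,g)$, and reads off (i)--(iii) directly from Propositions~\ref{PThmA3} and~\ref{PThmA4}. The only cosmetic difference is that the paper restricts $Df$ to the $g$-bundles $E_j(\cdot,g)$ whereas you restrict $Dg$ to the $f$-bundles $E_i(\cdot,f)$; by the symmetry of the two propositions this is immaterial.
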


\begin{proof}
For all $x\in \Gamma_1\cap\Gamma_2$, let
$T_xM=\bigoplus_{j=1}^{r(x, g)}E_j(x, g)$ be the
decomposition given in Proposition~\ref{PThmA3}.
By Proposition~\ref{PThmA3}(ii), $Df_x(E_j(x, g))=E_j(fx, g)$.
Restricted to $\{E_j(x, g)\}$, $\{Df_x^n\}$ is a cocycle on $M$
with respect to $f$.
Thus, similar to the proof of Proposition~2.3 in \cite{Hu},
we obtain an $\H$-invariant measurable set
$\Gamma_3\subset\Gamma_1\cap\Gamma_2$
with $\mu\Gamma_3=1$ \ $\forall \mu\in {\cal M}(M, \a)$, and a
decomposition of the tangent space into
$$
T_xM=\bigoplus_{i=1}^{r(x, f)}\bigoplus_{j=1}^{r(x, g)}E_{ij}(x), \
\forall  x\in\Gamma_3.
$$
Clearly $E_{ij}(x)=E_i(x, f)\cap E_j(x, g)$.  Thus (i) (ii) and
(iii) are direct corollaries of Proposition~\ref{PThmA3} and
Proposition~\ref{PThmA4}.
\end{proof}

\begin{proof}[Proof of Theoren A]
For any $s, t, r\in\mathbb Z$ and any $\varepsilon>0$, set
\begin{equation*}
\begin{split}
A^+_\varepsilon
=&\{x: \exists 0\not=u\in\! E_{ij}(x)\;  \mbox{s.t.}\;
     \chi(x, u, f^sg^th^r)-s\lambda_i(x, f)-t\lambda_j(x,g)
>(|\lambda|+1)\varepsilon\},              \\
A^-_\varepsilon
=&\{x: \exists 0\not=u\in\! E_{ij}(x)\;  \mbox{s.t.}\;
     \chi(x, u, f^sg^th^r)-s\lambda_i(x, f)-t\lambda_j(x,g)
<(|\lambda|+1)\varepsilon\},
\end{split}
\end{equation*}
where $\lambda=6s+6t+|2r-st|$.
It is easy to see that we only need to prove that for any
$\mu\in {\cal M}(M, \a)$, for all
$\varepsilon>0$, $\mu(A^\pm_\varepsilon)=0$

Now we prove $\mu(A^+_\varepsilon)=0$,
the other one can be obtained similarly.

Suppose $\mu(A^+_\varepsilon)>0$ for some $\mu\in {\cal M}(M, \a)$
and $\varepsilon>0$.  Then there exists a constant $C>0$
sufficiently large such that the set
\begin{equation}\label{fThmA1}
\begin{split}
  A_{\varepsilon, C}
:=\{x: \  &\exists\; 0\not=u\in E_{ij}(x)\  \mbox{s.t.}\
  \|D(f^sg^th^r)_x^nu\|      \\
 >&C^{-1}\|u\|\exp n(s\lambda_i(x,f)+t\lambda_j(x, g)+|\lambda|\varepsilon)\
   \forall n\geq 0\}
\end{split}
\end{equation}
satisfies $\mu(A_{\varepsilon, C})>0$.
Let
\begin{equation*}
\delta=\mu(A_{\varepsilon, C}).
\end{equation*}
By \eqref{fdefHG} we have
\begin{equation*}
\|D(f^sg^th^r)_x^nu\|=\|Dh^{st\frac{n(n-1)}{2}}Df^{sn}Dg^{tn}Dh_x^{rn}u\|.
\end{equation*}
Then
$$
\|D(f^sg^th^r)_x^{2n}u\|=\|Dh^{2stn^2}Df^{2sn}Dg^{2tn}Dh_x^{(2r-st)n}u\|
\quad \forall n\geq 0.
$$
For $l>0$, let
$$
A^l_f=\{x: l^{-1}\|u\|\exp n(\lambda_i(x, f)-\varepsilon)
\leq\|Df_x^nu\|
\leq l\|u\|\exp n(\lambda_i(x,f)+\varepsilon) \;
\forall u\in E_{ij}(x)\; \forall n\geq 0\}.
$$
$$
A^l_g=\{x: l^{-1}\|u\|\exp n(\lambda_j(x,g)-\varepsilon)
\leq\|Dg_x^nu\|
\leq l\|u\|\exp n(\lambda_j(x,g)+\varepsilon) \;
\forall u\in E_{ij}(x)\; \forall n\geq 0\}.
$$
$$
A^l_h=\{x: l^{-1}\|u\|\exp(-|n|\varepsilon)
\leq\|Dh_x^nu\|
\leq l\|u\|\exp (|n|\varepsilon) \;
\forall u\in E_{ij}(x)\;  \forall n\in\mathbb Z\}.
$$
Choose $l$ sufficiently large so that
$$
\mu(A^l_i)>1-\frac{1}{26}\delta, \quad i=f, g, h.
$$
Let
$$
B_n=A^l_g\cap g^{-tn}(A^l_f)\cap A^l_h\cap
    h^{-stn^2}f^{-sn}(A^l_g)\cap h^{-stn^2}(A^l_f).
$$
Then
$\mu(B_n)>1-\frac{5}{26}\delta,$ and for all $x\in B_n$ and all
$0\not=u\in E_{ij}(x)$, we have
\begin{equation}\label{fThmA2}
\begin{split}
     &\|D(f^{sn}g^{tn})_xu\|
\leq  l\|Dg_x^{tn}u\| \exp sn(\lambda_i(g^{tn}x, f)+\varepsilon) \\
\leq & l^2\|u\|
   \exp\bigl[tn\lambda_j(x, g)+sn\lambda_i(x,f)+(s+t)n\varepsilon\bigr],
\end{split}
\end{equation}
and
\begin{equation}\label{fThmA3}
\begin{split}
     & \|D(g^{tn}f^{sn}h^{stn^2})_xu\|
\geq  l^{-1}\|D(f^{sn}h^{stn^2})_xu\|\exp(tn\lambda_j(x,g)-tn\varepsilon) \\
\geq & l^{-2}\|Dh^{stn^2}u\|
   \exp\bigl[sn\lambda_i(x,f)+tn\lambda_j(x, g)-(t+s)n\varepsilon\bigr].
\end{split}
\end{equation}
Since
$f^{sn}g^{tn}=g^{tn}f^{sn}h^{stn^2}$, it follows from \eqref{fThmA2} and
\eqref{fThmA3} that
$$
\|Dh^{stn^2}u\|
\leq l^4\|u\|\exp[2(t+s)n\varepsilon] \quad
\forall n\geq 0,\ \forall x\in B_n,\ \forall 0\not=u\in E_{ij}(x).
$$
Let $C_n=h^{-stn^2}(B_n)\cap B_n$.
Then $\mu(C_n)>1-\frac{10}{26}\delta$.
For all $x\in C_n$ and all $0\not=u\in E_{ij}(x)$, we have
$$
\|Dh_x^{2stn^2}u\|=\|Dh^{stn^2}Dh_x^{stn^2}u\|
\leq l^4\|Dh_x^{stn^2}u\|e^{2(s+t)n\varepsilon}
\leq l^8\|u\|e^{4(s+t)n\varepsilon}.
$$
Let
$$
D_n=h^{-(2r-st)n}g^{-2tn}f^{-2sn}(C_n)\cap
h^{-(2r-st)n}g^{-2tn}(A^l_f)\cap h^{-(2r-st)n}(A^l_g)\cap A^l_h.
$$
Then
$$
\mu(D_n)>1-\frac{10}{26}\delta-\frac{3}{26}\delta
=1-\frac{\delta}{2}>1-\delta,
$$
and so
$$
\mu(D_n\cap A_{\varepsilon, C})>0.
$$
For any $x\in D_n\cap A_{\varepsilon, C}$ and any $0\not=u\in E_{ij}(x)$,
we have
\begin{equation}\label{fThmA4}
\begin{split}
&\|D(f^sg^th^r)_x^{2n}u\|
=\|Dh^{2stn^2}Df^{2sn}Dg^{2tn}Dh_x^{(2r-st)n}u\|  \\
\leq & l^8e^{4(s+t)n\varepsilon}le^{2sn\lambda_i(x, f)+2sn\varepsilon}
       le^{2tn\lambda_j(x,g)+2tn\varepsilon}le^{|2r-st|n\varepsilon}\|u\| \\
=& l^{11}\|u\|\exp n[2s\lambda_i(x, f)+2t\lambda_j(x, g)+\lambda\varepsilon]),
\end{split}
\end{equation}
where $\lambda=6s+6t+|2r-st|$.
In addition, from the definition of $A_{\varepsilon, C}$ in \eqref{fThmA1}
we get that for all $x\in A_{\varepsilon, C}$, there is
$0\not=u\in E_{ij}(x)$ such that
\begin{equation}\label{fThmA5}
\begin{split}
  &\|D(f^sg^th^r)_x^{2n}u\|>C^{-1}\|u\|
  \exp 2n[s\lambda_i(x,f)+t\lambda_j(x, g)+|\lambda|\varepsilon] \\
= & C^{-1}\exp (n|\lambda|\varepsilon)
   \|u\|\exp n[2s\lambda_i(x, f)+2t\lambda_j(x,g)+|\lambda|\varepsilon].
\end{split}
\end{equation}
Clearly, \eqref{fThmA4} and \eqref{fThmA5}
are contradict to each other if
$\disp n>\frac{{\rm log}(l^{11}C)}{|\lambda|\varepsilon}$.
Hence we must have $\mu(A^+_\varepsilon)=0$ for all
$\mu\in {\cal M}(M, \a)$ and and $\varepsilon>0$.
We complete the proof of Theorem A.
\end{proof}

\begin{proof}[Proof of Corollary~A.1]
By taking $s=t=0$ and $r=1$ in \eqref{fThmA} we know that the
Lyapunov exponent of any $0\ne u\in E_{ij}(x)$ is equal to $0$
with respect to $h$,  and so is that of any $0\ne u\in T_xM$.

Let $p$ be a periodic point of $h$ of period $n$, that is, $h^n(p)=p$.
Since $fh=hf$ and $gh=hg$, we get that both $f^n(p)$ and $g^n(p)$
are periodic orbits of $h$ with period $n$. Since there are only
finitely many periodic point of $h$ of period $n$, we get that
$\{f^sg^th^r(p): s,t, r\in {\mathbb Z}\}$ is a finite set. Hence
we can define a measure $\mu\in {\cal M}(M, \a)$ supported on the
set.  By finiteness and invariance we know that $\mu(\{p\})>0$,
i.e., $p$ is a generic point of $\mu$.  The fact that all Lyapunov
exponents of $h$ at $p$ are equal zero gives that the modulus of
all eigenvalues of $Dh^n(p)$ are equal to one.
\end{proof}

\begin{proof}[Proof of Corollary~A.2]
Assume that $h_{\top}(h)>0$, then there is some
$h$-invariant probability measure $\nu$ on $M$ such that the
metric entropy $h_\nu(h)>0$ by the variational principle. Consider
the probability measure sequence
$\nu_n\equiv\frac{1}{4n^2}\sum_{|i|,|j|\leq n}(f^ig^j)_*\nu$.
Passing to a subsequence if necessary,
suppose $\nu_n$ converges to a probability measure $\mu$ in the
weak-$*$ topology. It is easy to check that $\mu$ is
$\alpha(\H)$-invariant. 
Hence by Corollary~A.1 all Lyapunov exponents of $h$ are zero 
with respect to $\mu$.  Hence $h_{\mu}(h)=0$ by Ledrappier-Young's formula. 

On the other hand, since $h$ commutes with every element in $\alpha(\H)$, 
we have $h_{(f^ig^j)_*\nu}(h)=h_\nu(h)$ for any $i,j\in \Z$.
Since the entropy map $\nu\to h_{\nu}(h)$ is affine, 
we have $h_{\nu_n}(h)=h_\nu(h)$.  As the action
$\alpha$ is $C^\infty$, it follows from \cite{Nh} that
$0=h_{\nu}(h)\geq\lim\limits_{n\to\infty}h_{\mu_n}(h)=h_\mu(h)>0$.
This is a contradiction.
\end{proof}

\section{Faithfulness: Proof of Theorem B and its corollaries}\label{S3}
\setcounter{equation}{0}

Let $T$ be a diffeomorphism on a manifold $M$ with a hyperbolic set
$\Lambda$. For any $x\in \Lambda$, the \emph{stable manifold} of
$x$ for $T$ is defined by $W^s(x, T)=\{y\in M: d(T^nx,
T^ny)\rightarrow 0\ {\rm as}\ n \rightarrow \infty\}$, which is
$T$-invariant. For any $\varepsilon>0$, the \emph{local stable
manifold}  $W_\varepsilon^s(x, T)$ is the set $\{y\in M: d(T^nx,
T^ny)\leq\varepsilon \ {\rm for\ all}\ n\geq 0\}$. It is well
known that $W_\varepsilon^s(x, T)\subset W^s(x, T)$ and $W^s(x,
T)=\cup_{n\geq 0}T^{-n}W_\varepsilon^s(T^nx, T).$

\begin{lemma}\label{LThmB1}
Suppose $p$ is a common fixed point of $f$, $g$ and $h$, and $f$
is Anosov and has simple eigenvalues on stable direction with
$\l_->\l_+^2$ at $p$. Then either $h=\id$ or $h^2=\id$ on
$W^{s}(p,f)$.
\end{lemma}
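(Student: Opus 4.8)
The plan is to split the statement into a linear claim at the fixed point and a nonlinear rigidity claim along the stable manifold. Write $L=Df_p|_{E^s_p}$ and $S=Dh_p|_{E^s_p}$. Since $fh=hf$ and $p$ is fixed by both maps, $Dh_p$ commutes with $Df_p$ and therefore preserves the hyperbolic splitting $T_pM=E^s_p\oplus E^u_p$, so $S$ is well defined and $SL=LS$. The hypothesis that $f$ has simple eigenvalues on the stable direction makes $L$ a real matrix with \emph{distinct} real eigenvalues of moduli in $[\l_-,\l_+]\subset(0,1)$; hence its centralizer consists precisely of the matrices diagonal in the eigenbasis of $L$, and $S=\diag(s_1,\dots,s_d)$ with $s_i\in\RR$. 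On the other hand, $p$ being a common fixed point of $f$, $g$ and $h$ makes the Dirac measure $\delta_p$ lie in $\M(M,\a)$, so Corollary~A.1 applies and all Lyapunov exponents of $h$ at $p$ vanish; for a fixed point this forces every eigenvalue of $Dh_p$, and in particular every $s_i$, to have modulus $1$. Thus each $s_i=\pm1$ and $S^2=\id$.

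The heart of the argument is then the following contraction-rigidity fact, which I would state and prove separately: if $\psi$ is a $C^2$ diffeomorphism with $\psi f=f\psi$, $\psi(p)=p$ and $D\psi_p=\id$, then $\psi=\id$ on $W^s(p,f)$. Granting it, the lemma follows immediately: if $S=\id$ we apply it to $\psi=h$ and obtain $h=\id$ on $W^s(p,f)$; if $S\neq\id$ then $S^2=\id$ gives $D(h^2)_p=\id$, and applying it to $\psi=h^2$ yields $h^2=\id$ on $W^s(p,f)$. This is exactly the claimed dichotomy.

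To prove the contraction-rigidity fact I would use the commutation relation $\psi=f^{-n}\psi f^{n}$ together with the spectral gap $\l_+^2<\l_-$. It is convenient to linearize $f$ first: the gap rules out every resonance $\l_i=\prod_j\l_j^{m_j}$ with $\sum_jm_j\ge2$, because $|\prod_j\l_j^{m_j}|\le\l_+^{\sum_jm_j}\le\l_+^2<\l_-\le|\l_i|$, so by the Sternberg--Belitskii linearization theorem $f$ is conjugate on $W^s(p,f)$ to the linear contraction $L$ and $f^{-n}$ becomes the exact linear map $L^{-n}$. Now fix $y\in W^s(p,f)$ and put $z_n=f^ny\to p$. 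Since $\psi\in C^2$, $\psi(p)=p$ and $D\psi_p=\id$, we have $d(\psi z_n,z_n)\le C\,d(z_n,p)^2\le C'(\l_++\e)^{2n}$, while $f^{-n}$ expands distances near $p$ by at most $(\l_-^{-1}+\e)^n$ (the finitely many initial steps away from $p$ only cost a fixed constant). Therefore
\[
d(\psi y,y)=d\big(f^{-n}\psi z_n,\,f^{-n}z_n\big)\le C''\big((\l_++\e)^2(\l_-^{-1}+\e)\big)^{n},
\]
and since $(\l_++\e)^2(\l_-^{-1}+\e)\to\l_+^2/\l_-<1$ as $\e\to0$, the right-hand side tends to $0$; hence $\psi(y)=y$.

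I expect the contraction-rigidity fact to be the main obstacle, and within it the delicate point is the uniform control of the backward expansion of $f^{-n}$ applied to the pair $\psi z_n,\,z_n$: one must guarantee that over the $n$ steps the distance is amplified by no more than essentially $\l_-^{-n}$ as the two points travel from a neighborhood of $p$ out to $y$. This is precisely where the linearization of $f$---hence the non-resonance packaged in $\l_->\l_+^2$---does the real work, and where the finite differentiability must be matched against the order-two Taylor estimate for $\psi$. By contrast, the linear step ($S^2=\id$) and the reduction via Corollary~A.1 are routine.
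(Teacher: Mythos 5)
Your proposal is correct and follows essentially the same route as the paper: both arguments first use Corollary~A.1 (via the invariant measure $\delta_p$) together with commutation against the simple-spectrum contraction $Df_p|_{E^s_p}$ to get $Dh_p|_{E^s_p}=\pm\id$ and pass to $h^2$, and then kill the nonlinear part along $W^s(p,f)$ by playing a second-order Taylor estimate at $p$ against the two-sided contraction rates $(\l_-,\l_+)$ on the local stable manifold, which is exactly where $\l_->\l_+^2$ enters. The differences are cosmetic: the paper runs the comparison forward (bounding the ratio $d(x_n,h(x_n))/d(x_n,p)$ from below and above to get a contradiction) instead of pulling back by $f^{-n}$, it never invokes Sternberg linearization since its two-sided estimate \eqref{fLemB1.1} already supplies the backward Lipschitz control you need, and it uses the exponent $r'=\min\{r,2\}$ so that the same argument also covers $C^r$ actions with $1<r<2$, whereas you fix $C^2$ regularity (consistent with the stated hypothesis $\l_->\l_+^2$).
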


\begin{proof}
Note that the eigenvalue of $Dh_p$ restricted to each stable
eigenspace is $\pm 1$ by Corollary~A.1.  We may assume it is $1$,
otherwise use $h^2$ instead of $h$. Since $f$ has simple eigenvalues 
on stable direction, and $h$ commutes with $f$, we must have
$Dh_p|_{E^s(p,f)}=\id$, where $E^s(p,f)=\{v\in T_p(M): \|Df_p(v)\|<\|v\|\}$.

Denote $r'=\min\{r, 2\}$.  Take $\varepsilon>0$ small enough such that
$\l_--\varepsilon>(\l_++\varepsilon)^{r'}$ and such that for any 
$x, y\in W_\varepsilon^s(p, f)$ and $n\in\mathbb N$,
\begin{equation}\label{fLemB1.1}
C_1(\lambda_{-}-\varepsilon)^nd(x, y)<d(f^n(x),
f^n(y))<C_2(\lambda_{+}+\varepsilon)^nd(x, y)
\end{equation}
for some fixed constants $C_1, C_2>0$. It is
clear that $hW^s(p, f)=W^s(p, f)$ by $hf=fh$. So there is $\e'\le \e$ 
such that $hW^s_{\e'}(p, f)\subset W^s_\e(p, f)$. 

Suppose $h(x)\neq x$ for some $x\in W^s_{\e'}(p,f)$. Let $x_n=f^n(x)$.
Then by \eqref{fLemB1.1} we have
\begin{equation}\label{fLemB1.2}
  \frac{d(x_n, h(x_n))}{d(x_n, p)}
= \frac{d(f^n(x), f^n(h(x)))}{d(f^n(x), p)}
\geq \frac{C_1}{C_2} \frac{(\l_--\varepsilon)^n d(x,h(x))}
{(\l_++\varepsilon)^n d(x,p)}.
  \end{equation}

Note that $W^s_{\e'}(p,f)$ is a $C^r$ submanifold 
tangent to $E^s(p,f)$ at $p$.
Take a local coordinate system on $W^s(p)$ at $p$.  We have
$$
h(x_n)-p=\int_0^1 Dh_{p+t(x_n-p)}(x_n-p) dt
=\Bigl( \id + (\int_0^1 Dh_{p+t(x_n-p)}dt -\id )\Bigr)(x_n-p).
$$
Since $h$ is a $C^r$ diffeomorphism and $Dh_p|_{E^s(p,f)}=\id$,
the equation gives
$$
\Bigl|\int_0^1 Dh_{p+t(x_n-p)}dt -\id \Bigr|\le C_3 |x_n-p|^{r'-1}
$$
for some $C_3>0$.  
Hence we get
$$
|h(x_n)-x_n|\le C_3 |x_n-p|^{r'}.
$$
Note that $|h(x_n)-x_n|=d(x_n, h(x_n))$ and $|x_n-p|=d(x_n,p)$.  
So by \eqref{fLemB1.1}
\begin{align*}
\frac{d(x_n, h(x_n))}{d(x_n, p)}
\leq C_3d(x_n,p)^{r'-1}
\leq C_3C_2(\l_++\varepsilon)^{n(r'-1)}d(x,p)^{r'-1}
\end{align*}
for all $n>0$, contradicting to \eqref{fLemB1.2} and
the fact $\l_--\varepsilon>(\l_++\varepsilon)^{r'}$. 

Then we must have $h(x)=x$ for any $x\in W^s_{\e'}(p,f)$, and then $h=\id$
on $W^{s}(p,f)$ by using the facts 
$W^{s}(p,f)=\cup_{n\geq 0}f^{-n}W_{\e'}^s(p,f)$ and $fh=hf$.
\end{proof}

\begin{lemma}\label{LThmB2}
Suppose $p$ is a periodic point of $f$ with period $n$ and $f$ has
only finitely many periodic points of period $n$. 
Then there are  $m, k\in \mathbb N$
such that $p$ is a common fixed point of $f^n$, $g^m$ and $h^k$.

In particular, if $p$ is the unique fixed point of $f$, then
$h(p)=p=g(p)$.
\end{lemma}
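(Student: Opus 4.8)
The plan is to run the argument on the finite set $F=\mathrm{Fix}(f^{n})=\{x\in M:\ f^{n}(x)=x\}$, which contains $p$ and is finite by hypothesis, and to extract the needed powers of $h$ and $g$ from the commutation relations \eqref{fdefHG}. The first step is to record the iterate identities those relations force. From $fg=gfh$ and $fh=hf$ one gets $g^{-1}fg=fh$, and since $g$ commutes with $h$ this propagates to $g^{-a}fg^{a}=fh^{a}$, hence $g^{-a}f^{n}g^{a}=(fh^{a})^{n}=f^{n}h^{an}$ for every $a\in\mathbb{Z}$; equivalently, as compositions of maps, $f^{n}\circ g^{a}=g^{a}\circ f^{n}\circ h^{an}$. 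I also note that $h$ commutes with $f^{n}$, so $h$ carries $F$ bijectively onto itself.

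The power of $h$ is then immediate: since $h\colon F\to F$ is a bijection of a finite set with $p\in F$, the $h$-orbit of $p$ is a finite cycle, so $h^{k}(p)=p$ for some $k\ge 1$.

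The core of the argument is the power of $g$, and here lies the main obstacle: unlike $h$, the map $g$ does \emph{not} preserve $F$ in general. Indeed, applying the identity with $a=1$ to $x\in F$ gives only $f^{n}(g(x))=g(f^{n}(h^{n}(x)))=h^{n}(g(x))$, so $g(x)$ lands in $F$ only when $h^{n}$ fixes $x$. The device that removes this obstruction is to use the $h$-period $k$ already obtained and to restrict to the subsequence of iterates $g^{jk}$. Applying $f^{n}\circ g^{a}=g^{a}\circ f^{n}\circ h^{an}$ to $p$ with $a=jk$, one has $h^{an}(p)=(h^{k})^{jn}(p)=p$, whence $f^{n}(g^{jk}(p))=g^{jk}(f^{n}(p))=g^{jk}(p)$; that is, $g^{jk}(p)\in F$ for every $j\in\mathbb{Z}$. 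Since $F$ is finite and $g$ is invertible, the points $\{g^{jk}(p)\}_{j\ge 0}$ must repeat, giving $g^{m}(p)=p$ for some positive multiple $m$ of $k$. Together with $f^{n}(p)=p$ and $h^{k}(p)=p$, this exhibits $p$ as a common fixed point of $f^{n}$, $g^{m}$ and $h^{k}$.

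For the final assertion, take $n=1$ and $p$ the unique fixed point of $f$, so $F=\{p\}$. Then $h(p)=p$ (forcing $k=1$), and the relation $g^{jk}(p)\in F$ with $k=1$ gives $g(p)\in\{p\}$, i.e.\ $g(p)=p$; alternatively $f(g(p))=g(f(h(p)))=g(p)$ puts $g(p)\in\mathrm{Fix}(f)=\{p\}$ directly. This is exactly $h(p)=p=g(p)$, completing the plan. The only delicate point throughout is the failure of $g$ to preserve $F$ pointwise; passing to the subsequence $g^{jk}$ indexed by multiples of the $h$-period $k$ is precisely what circumvents it.
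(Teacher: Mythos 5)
Your proof is correct and follows essentially the same route as the paper's: both derive the iterate identity $f^{n}g^{a}=g^{a}f^{n}h^{an}$ from the relations \eqref{fdefHG}, first obtain $h^{k}(p)=p$ from finiteness of $\mathrm{Fix}(f^{n})$, then use that $k$ to show the points $g^{jk}(p)$ stay in $\mathrm{Fix}(f^{n})$ and apply pigeonhole to get $g^{m}(p)=p$. Your write-up is in fact slightly more careful than the paper's, since you make explicit the inductive claim that \emph{all} iterates $g^{jk}(p)$ lie in $\mathrm{Fix}(f^{n})$, which the paper uses implicitly.
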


\begin{proof}
Since $fh=hf$, $h(p)$ is a periodic point of $f$ with period $n$.
By finiteness of $n$-periodic point set of $f$, there is some $k$
such that $h^k(p)=p$. Then we have
$f^ng^k(p)=g^kf^nh^{kn}(p)=g^k(p)$, that is, $g^k(p)$ is also a
periodic point of $f$ with period $n$.  This implies that
$g^{kl}(p)=g^{k}(p)$ for some $l\in\mathbb N$. Taking $m=lk-k$, 
we then complete the proof.

The second part of the lemma now is obvious.
\end{proof}

\begin{proof}[Proof of Theoren B]
Without loss of generality, we may suppose $f$ is an Anosov element
that has simple eigenvalues on stable direction with 
$\l_->\l_+^{\min\{r,2\}}$. 
By spectral decomposition, $f$ has basic sets $\O_1, \dots, \O_t$ (see
\cite{Bo}). On each basic set $\O_i$, we take a periodic point
$p_i\in \O_i$. Assume $f^{n_i}(p_i)=p_i$ for some $n_i\in\mathbb
N$.  Then there are $m_i$ and $k_i$ such that $p_i$ is a common
fixed point of $f^{n_i}, g^{m_i}$ and $h^{k_i}$ by Lemma~\ref{LThmB2}.
Applying Lemma~\ref{LThmB1} to $f^{n_i}, g^{m_ik_i}$ and $h^{n_im_ik_i}$,
we get $h^{2n_im_ik_i}=\id$ on $W^s(p_i, f^{n_i})$. Since
$M=\cup_{i=1}^t {\overline {W^s(p_i, f^{n_i})}}$, we get
$h^{2k}=\id$, where $k=\prod_{i=1}^t n_im_ik_i$.
\end{proof}

\begin{proof}[Proof of Corollary~B.2]
Since $f$ has only one fixed point,  we have $f(p)=g(p)=h(p)=p$ by
Lemma~\ref{LThmB2}.   Since $\dim E^s_p(f)=1$,
restricted to $E^s_p(f)$, $Df_p$ and $Dg_p$ commutes.
Hence, $Df_p\cdot Dg_p=Dg_p\cdot Df_p\cdot Dh_p$ implies 
$Dh_p|_{E^s_p(f)}=\id$.  
By the proof of Lemma~\ref{LThmB1}, we have that $h$ is identity
on $W^s(p, f)$. From \cite{Ne}, we know that $f$ is transitive.  So
$W^s(p, f)$ is dense in $M$, and $h$ is identity on $M$.
\end{proof}

\section{Affine Anosov action on tori: Proof of Theorem C \& D}\label{S4}
\setcounter{equation}{0}

Before the proof of Theorem C, let us recall two classical
results.

\begin{theorem}[Adler-Palais \cite{AP}]
If $R, S\in {\rm Aff}({\mathbb T^n})$ with $R$ being ergodic, then
any homeomorphism $\Phi$ of $\mathbb T^n$ with $\Phi R=S \Phi$ is
in ${\rm Aff}(\mathbb T^n)$.
\end{theorem}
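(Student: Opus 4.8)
The plan is to linearize by lifting $\Phi$ and then show that the nonlinear part of the lift is constant. First I would lift $\Phi$ to a map $\widetilde{\Phi}\colon\RR^n\to\RR^n$; since $\Phi$ is a homeomorphism of $\mathbb{T}^n$, its induced action on $\pi_1(\mathbb{T}^n)=\ZZ^n$ is given by some $C\in{\rm GL}(n,\ZZ)$, so $\widetilde{\Phi}(x+m)=\widetilde{\Phi}(x)+Cm$ for all $m\in\ZZ^n$, and the remainder $\phi(x):=\widetilde{\Phi}(x)-Cx$ descends to a continuous map $\phi\colon\mathbb{T}^n\to\RR^n$. Writing $R=T_{A,a}$ and $S=T_{B,b}$ and lifting the relation $\Phi R=S\Phi$, a comparison of the linear (unbounded) parts forces $CA=BC$, after which the bounded parts give the twisted cohomological equation $\phi\circ R=B\phi+c_0$ for some constant $c_0\in\RR^n$. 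Since a map of the form $x\mapsto Cx+\text{const}$ is exactly an element of $\Aff(\mathbb{T}^n)$, it suffices to prove that $\phi$ is constant.

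Next I would pass to Fourier coefficients, writing $\phi(x)=\sum_{m\in\ZZ^n}\hat{\phi}(m)e^{2\pi i\langle m,x\rangle}$ with $\hat{\phi}(m)\in\CC^n$. Substituting into $\phi\circ R=B\phi+c_0$ and matching coefficients yields, for every $m\neq0$, the recursion
\[
B\,\hat{\phi}(A^{T}m)=e^{2\pi i\langle m,a\rangle}\,\hat{\phi}(m),
\]
while the zero mode only constrains the mean $\hat{\phi}(0)$. Because $A\in{\rm GL}(n,\ZZ)$, the transpose $A^{T}$ permutes $\ZZ^n\setminus\{0\}$, so I would analyze $\hat{\phi}$ orbit by orbit under $A^{T}$, separating the infinite orbits from the finite ones.

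For an infinite orbit $\{(A^{T})^{j}m\}_{j\in\ZZ}$ the recursion gives $\|\hat{\phi}((A^{T})^{j}m)\|=\|B^{-j}\hat{\phi}(m)\|$ up to unimodular factors. Since $\phi$ is at least $L^2$, its coefficients are square-summable and hence tend to $0$ along the orbit in both directions. Decomposing $\hat{\phi}(m)$ along the stable, center, and unstable generalized eigenspaces of $B$: as $j\to+\infty$ the stable component blows up under $B^{-j}$, as $j\to-\infty$ the unstable component blows up, and a nonzero center component stays bounded away from $0$; each possibility contradicts the decay, so $\hat{\phi}(m)=0$ on every infinite orbit.

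The finite orbits are the delicate point, and I expect this to be the main obstacle. A nonzero finite $A^{T}$-orbit exists only when $A$ (hence $B=CAC^{-1}$) has a root-of-unity eigenvalue, which is exactly the situation the translation part $a$ is allowed to compensate for in an ergodic affine map. Iterating the recursion once around an orbit of length $p$ gives $B^{p}\hat{\phi}(m)=\zeta\,\hat{\phi}(m)$, where $\zeta=e^{2\pi i\langle s,a\rangle}$ has modulus $1$ and $s=\sum_{i=0}^{p-1}(A^{T})^{i}m$ is itself fixed by $A^{T}$. The ergodicity of $R$ is precisely the statement that $\zeta\neq1$ for every such nonzero orbit — otherwise the frequencies in the orbit would assemble into a nonconstant $R$-invariant function — and I would combine $\zeta\neq1$ with the fact that these finite-orbit directions correspond to root-of-unity eigenvalues of $B$ (so the only unit-modulus eigenvalues of $B^{p}$ available along them reduce to $1$) to force $\hat{\phi}(m)=0$. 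Once all nonzero Fourier coefficients vanish, $\phi\equiv\hat{\phi}(0)$ is constant, whence $\widetilde{\Phi}(x)=Cx+\hat{\phi}(0)$ and $\Phi=T_{C,\hat{\phi}(0)}\in\Aff(\mathbb{T}^n)$, as required.
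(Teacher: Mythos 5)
The paper offers no proof of this statement (it is quoted from [AP] as a classical fact), so your proposal can only be judged on its own merits. Your reduction is the standard and correct one: lifting gives $CA=BC$ and the twisted equation $\phi\circ R=B\phi+c_0$, the dual recursion $B\hat{\phi}(A^{T}m)=e^{2\pi i\langle m,a\rangle}\hat{\phi}(m)$ is right, the infinite-orbit argument (square-summability versus the stable/center/unstable trichotomy for $B^{-j}$, with the center part bounded below via the top Jordan coordinate) is complete, and your characterization of ergodicity of $T_{A,a}$ as ``$\zeta\neq1$ on every nonzero finite dual orbit'' is exactly correct. The gap is precisely where you feared: the parenthetical claim that ``these finite-orbit directions correspond to root-of-unity eigenvalues of $B$, so the only unit-modulus eigenvalues of $B^{p}$ available along them reduce to $1$'' is unjustified and false. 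The recursion constrains $\hat{\phi}(m)\in\CC^n$ only through $B^{p}\hat{\phi}(m)=\zeta\hat{\phi}(m)$; nothing confines $\hat{\phi}(m)$ to any distinguished subspace, an integer matrix $B\in{\rm GL}(n,\ZZ)$ may carry unimodular eigenvalues that are \emph{not} roots of unity (Salem-type), and ergodicity of $R$ constrains only the phases $\zeta$, not the spectrum of $B$, so the resonance $\zeta\in\mathrm{spec}(B^{p})$ with $\zeta\neq1$ cannot be excluded.

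In fact no argument can close this gap, because the statement fails at this level of generality. Let $A_0\in {\rm GL}(4,\ZZ)$ be the companion matrix of a quartic Salem polynomial, with eigenvalues $\lambda,\lambda^{-1},e^{\pm2\pi i\theta}$, $\theta$ irrational, and let $v_0$ satisfy $A_0v_0=e^{2\pi i\theta}v_0$. On $\T^5$ set $R(x_1,x')=(x_1+\theta,\,A_0x')$, which is ergodic (irrational rotation times a mixing automorphism; equivalently, every nonzero finite dual orbit is fixed of the form $m=(m_1,0)$ with $\zeta=e^{2\pi im_1\theta}\neq1$). Then
\begin{equation*}
\Phi(x_1,x')=\bigl(x_1,\;x'+2\,\mathrm{Re}\bigl(v_0e^{2\pi ix_1}\bigr)\bigr)
\end{equation*}
is a non-affine homeomorphism with $\Phi R=R\Phi$, as one checks directly using $A_0v_0=e^{2\pi i\theta}v_0$; this realizes exactly the resonance your step needed to rule out, with $m=(1,0)$, $p=1$, $\zeta=e^{2\pi i\theta}\in\mathrm{spec}(B)$, $B=A$. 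So the theorem as quoted requires an extra hypothesis, namely that the linear part $A$ of the ergodic map $R$ has no root-of-unity eigenvalues: then $A^{T}$ has no nonzero finite orbits and your proof is complete as written. That hypothesis is automatic both in the original Adler--Palais setting (for an automorphism, ergodicity \emph{is} the absence of root-of-unity eigenvalues) and in this paper's actual use of the statement in Theorem C, where the ergodic element $K$ is an affine Anosov map, hence has hyperbolic linear part.
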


\begin{theorem}[Franks-Manning \cite{Fr, Ma}]
Any Anosov diffeomorphism of $\mathbb T^n$ is topologically
conjugate to a hyperbolic toral automorphism.
\end{theorem}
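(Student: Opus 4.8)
\emph{The plan} is to carry out the classical two-part argument of Franks and Manning: extract a candidate hyperbolic automorphism from the homotopy data of $\phi$, show that it is genuinely hyperbolic, construct a semiconjugacy from $\phi$ onto it, and then promote the semiconjugacy to a topological conjugacy. Let $\phi$ be the given Anosov diffeomorphism of $\mathbb{T}^n$, and let $A\in{\rm GL}(n,\mathbb{Z})$ be the automorphism it induces on $\pi_1(\mathbb{T}^n)\cong\mathbb{Z}^n$ (equivalently on $H_1$). Since $\phi$ and the linear automorphism $L_A=T_{A,0}$ act in the same way on $\pi_1$, they are homotopic; lifting to the universal cover $\mathbb{R}^n$ one obtains a lift $\tilde\phi$ for which $\tilde\phi-A$ is bounded and $\mathbb{Z}^n$-periodic.

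\textbf{The main obstacle} is the first step: showing that $A$ is hyperbolic, i.e.\ has no eigenvalue on the unit circle. This is Manning's contribution. The mechanism is to compare two ways of counting periodic orbits. Since an Anosov diffeomorphism of the torus is transitive, its periodic points are dense and $\#\mathrm{Fix}(\phi^k)$ grows exponentially at rate $h_{\mathrm{top}}(\phi)>0$; on the other hand each fixed point of $\phi^k$ has Lefschetz index of constant sign $\pm1$, so $\#\mathrm{Fix}(\phi^k)=|L(\phi^k)|=|\det(I-A^k)|=\prod_j|1-\mu_j^k|$, where the $\mu_j$ are the eigenvalues of $A$ (acting on $H_*=\Lambda^*\mathbb{Z}^n$). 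A neutral eigenvalue $|\mu_j|=1$ would make this product fail to grow at the required exponential rate along suitable subsequences of $k$, a contradiction. Turning this heuristic into a proof — in particular controlling eigenvalues on the circle that are not roots of unity and justifying the index computation from the hyperbolic splitting — is the delicate technical heart, and is where the full strength of expansiveness and the global shadowing structure of $\tilde\phi$ enters.

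Granting hyperbolicity of $A$, I would build the semiconjugacy following Franks. Seeking the lift of the conjugacy in the form $\tilde H=\id+u$ with $u$ bounded and $\mathbb{Z}^n$-periodic, the intertwining relation $\tilde H\tilde\phi=A\tilde H$ is equivalent to the cohomological equation $Au-u\circ\tilde\phi=A-\tilde\phi$. Decomposing $u$ along the stable/unstable splitting $\mathbb{R}^n=E^s\oplus E^u$ of $A$ and inverting the operator $u\mapsto Au-u\circ\tilde\phi$ (summing forward on the contracting part and backward on the expanding part), a fixed-point argument in the Banach space of bounded continuous maps yields a unique such $u$. This $\tilde H$ descends to a continuous $H\colon\mathbb{T}^n\to\mathbb{T}^n$ homotopic to the identity, hence of degree one and surjective, satisfying $H\phi=L_AH$.

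\textbf{Finally}, to upgrade $H$ to a homeomorphism I would establish injectivity, which is Franks' second step. The intertwining $H\phi=L_AH$ forces $H$ to carry the stable and unstable foliations of $\phi$ onto the linear stable and unstable foliations of $L_A$, because these leaves are characterized purely dynamically (forward- or backward-asymptotic orbits). If $H(x)=H(y)$, then on lifts $\tilde\phi^k\tilde x-\tilde\phi^k\tilde y=u(\tilde\phi^k\tilde y)-u(\tilde\phi^k\tilde x)$, so that $\|\tilde\phi^k\tilde x-\tilde\phi^k\tilde y\|\le2\|u\|_\infty$ for all $k\in\mathbb{Z}$; combining this uniform bound with expansiveness of $\phi$ and the leaf-preservation shows that $x$ and $y$ lie on a common stable and a common unstable leaf, whence $x=y$. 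A continuous bijection of the compact manifold $\mathbb{T}^n$ is a homeomorphism, so $H$ is the required topological conjugacy between $\phi$ and the hyperbolic toral automorphism $L_A$.
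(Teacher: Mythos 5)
The paper offers no proof of this statement: it is quoted as classical background (Theorem 4.2, with the citations \cite{Fr, Ma}) and is used as a black box in the proof of Theorem C, so your proposal can only be judged on its own merits as a reconstruction of the Franks--Manning argument. At the level of architecture it is the right reconstruction: linearization $A=\phi_*$ on $\pi_1$, hyperbolicity of $A$ (Manning's step), a semiconjugacy $H=\id+u$ obtained by solving a cohomological equation in the Banach space of bounded $\mathbb{Z}^n$-periodic maps via the splitting of $A$ (Franks' step, with a harmless sign slip: the equation is $Au-u\circ\tilde\phi=\tilde\phi-A$), and injectivity from boundedness of orbit distances on the cover. The semiconjugacy construction you describe is essentially complete and correct.

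However, there are genuine gaps at the two places where the real content of the theorem lives. First, you invoke transitivity of $\phi$ to get density of periodic points and exponential growth of $\#\mathrm{Fix}(\phi^k)$; but transitivity of toral Anosov diffeomorphisms is not known a priori --- it is a \emph{consequence} of this very theorem (via conjugacy to an ergodic automorphism), so using it is circular. One must instead work through the spectral decomposition and specification on a basic set of maximal entropy. Second, the hyperbolicity heuristic does not close as stated: the unit-modulus factors in $\prod_j|1-\mu_j^k|$ are bounded above by $2$ and, by Dirichlet, dip only \emph{polynomially} along subsequences, so $|\det(I-A^k)|$ still grows at the rate $\sum_{|\mu_j|>1}\log|\mu_j|$ up to polynomial corrections; to extract a contradiction one needs, in addition, a matching lower bound such as $h_{\mathrm{top}}(\phi)\geq\sum_{|\mu_j|>1}\log|\mu_j|$ (an entropy--homology inequality on tori, which is itself a substantial theorem), while eigenvalues that are roots of unity must be handled separately (vanishing Lefschetz numbers of $\phi^{mk}$ against the existence of periodic points). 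Since you explicitly defer this step, the centerpiece of the theorem is asserted rather than proven; Manning's actual argument is considerably more involved than the entropy comparison you sketch. Third, in the injectivity step, ``combining this uniform bound with expansiveness'' is too quick: the bound $2\|u\|_\infty$ will in general far exceed the expansiveness constant of $\phi$, so expansiveness on the torus does not directly apply; the standard repair is a global statement on the universal cover (bounded orbits of the lift coincide), which requires the global product structure of the lifted invariant foliations of $\phi$ on $\mathbb{R}^n$ --- a nontrivial lemma that your dynamical characterization of leaves does not by itself supply.
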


\begin{proof}[Proof of Theoren C]
Suppose $k=f^rg^sh^t$ is Anosov for some $r, s, t\in\mathbb Z$.
Then by Theorem 4.2,  there is a homeomorphism $\Phi$ of $\mathbb
T^n$ such that $K=\Phi^{-1}k\Phi\in{\rm Aff}({\mathbb T^n})$.
Since $K$ is topologically transitive and affine, $K$ is ergodic.

Denote $F=\Phi^{-1}f\Phi$, $G=\Phi^{-1}g\Phi$ and $H=\Phi^{-1}h\Phi$.
Then we have  $FH=HF$, $GH=HG$, $FG=GFH$, and $K=F^rG^sH^t$.

Since $H^{-1}KH=K$ and $K\in \Aff(\T^n)$, $H\in {\rm Aff}(\mathbb
T^n)$ by Theorem 4.1. Similarly, since $F^{-1}KF=KH^{-s}$ and
$K,KH^{-s} \in \Aff(\T^n)$, $F\in {\rm Aff}(\mathbb T^n)$; and
since $G^{-1}KG=KH^{r}$ and $K,KH^{r} \in \Aff(\T^n)$, $G\in {\rm
Aff}(\mathbb T^n)$.
\end{proof}

\begin{lemma}\label{LThmD1}
Let $A, B, \mathcal{C}\in {\rm GL}(n, \mathbb R)$ such that $AB=BA\mathcal{C}, A\mathcal{C}=\mathcal{C}A$
and $B\mathcal{C}=\mathcal{C}B$. Suppose $A$ is hyperbolic with stable linear space
$E^s\subset \mathbb R^n$. If the modular of each eigenvalue of $\mathcal{C}$
is equal to 1, then $E^s$ is $B$ and $\mathcal{C}$ invariant.
\end{lemma}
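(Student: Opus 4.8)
The plan is to work entirely at the level of linear algebra, using the dynamical characterization of the stable space: since $A$ is hyperbolic,
\begin{equation*}
E^s=\{v\in\mathbb{R}^n:\ A^nv\to 0\ \text{as}\ n\to\infty\},
\end{equation*}
and moreover there are constants $C>0$ and $\rho\in(0,1)$ with $\|A^nv\|\le C\rho^n\|v\|$ for all $v\in E^s$ and all $n\ge 0$. Both required invariances will follow by tracking the action of $A^n$ on $Bv$ and $\mathcal{C}v$ and showing the images still tend to $0$.

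First I would dispose of $\mathcal{C}$-invariance, which is immediate from $A\mathcal{C}=\mathcal{C}A$: for $v\in E^s$ we have $A^n(\mathcal{C}v)=\mathcal{C}(A^nv)\to\mathcal{C}\cdot 0=0$, so $\mathcal{C}v\in E^s$, and invertibility of $\mathcal{C}$ gives $\mathcal{C}E^s=E^s$. This makes $\mathcal{C}|_{E^s}$ well defined; its eigenvalues are among those of $\mathcal{C}$, hence all of modulus $1$, so its spectral radius is $1$ and for every $\varepsilon>0$ there is a constant $C_\varepsilon$ with $\|\mathcal{C}^nv\|\le C_\varepsilon(1+\varepsilon)^n\|v\|$ for all $v\in E^s$.

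The heart of the argument is a commutation identity for $B$. Using $AB=BA\mathcal{C}$ together with $A\mathcal{C}=\mathcal{C}A$, I would prove by induction that
\begin{equation*}
A^nB=BA^n\mathcal{C}^n\qquad\text{for all }n\ge 1,
\end{equation*}
the base case being the defining relation and the inductive step commuting one factor of $A$ past $B$ (which produces a $\mathcal{C}$) and then sliding it through $A^n$ via $A\mathcal{C}=\mathcal{C}A$. With this in hand, for $v\in E^s$ the identity and $A\mathcal{C}=\mathcal{C}A$ give $A^n(Bv)=BA^n\mathcal{C}^nv=B\,\mathcal{C}^n(A^nv)$; since $v\in E^s$ both $A^nv$ and $\mathcal{C}^n(A^nv)$ remain in the $\mathcal{C}$-invariant space $E^s$, and choosing $\varepsilon$ so small that $\rho(1+\varepsilon)<1$ yields $\|\mathcal{C}^n(A^nv)\|\le C_\varepsilon C\,(\rho(1+\varepsilon))^n\|v\|\to 0$. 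By continuity of $B$ we conclude $A^n(Bv)\to 0$, so $Bv\in E^s$ and hence $BE^s=E^s$.

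The step I expect to be the main obstacle is precisely this last estimate: one must know that $\|\mathcal{C}^n\|$ restricted to $E^s$ grows only subexponentially and that this is outpaced by the exponential contraction of $A^n$ on $E^s$. This is exactly where the hypothesis that every eigenvalue of $\mathcal{C}$ has modulus $1$ is used, and it is also why the $\mathcal{C}$-invariance of $E^s$ must be established first, so that $\mathcal{C}^n|_{E^s}$ is meaningful and the product $\mathcal{C}^nA^n$ can be controlled on $E^s$ alone.
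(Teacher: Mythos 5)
Your proof is correct and follows essentially the same route as the paper's: both establish $\mathcal{C}$-invariance directly from $A\mathcal{C}=\mathcal{C}A$, use the commutation identity $A^nB=BA^n\mathcal{C}^n$, and conclude $A^nBv\to 0$ by beating the growth of $\mathcal{C}^n$ with the exponential contraction of $A^n$ on $E^s$. The only (immaterial) difference is in the growth bound: the paper bounds $\|\mathcal{C}^n\|$ on all of $\mathbb{R}^n$ by a polynomial in $n$ via the modulus-one eigenvalue hypothesis, whereas you restrict to $E^s$ and use the spectral radius to get a subexponential bound $C_\varepsilon(1+\varepsilon)^n$ --- both estimates suffice, and your variant merely requires establishing $\mathcal{C}$-invariance first, which you correctly do.
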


\begin{proof}
For any $v\in E^s$, we have
$$\lim\limits_{n\to\infty}A^n\mathcal{C}v=\lim\limits_{n\to\infty}\mathcal{C}A^nv=0$$ 
by $A\mathcal{C}=\mathcal{C}A$, so $E^s$ is $\mathcal{C}$ invariant. 
Since the modular of each eigenvalue of $\mathcal{C}$
is equal to $1$, the increasing rate of matrix norm $\|\mathcal{C}^n\|$ is
bounded by a polynomial in $n$ by an easy calculation. Thus we
have
$$\lim\limits_{n\to\infty}A^nBv
=\lim\limits_{n\to\infty}BA^n\mathcal{C}^nv=\lim\limits_{n\to\infty}B\mathcal{C}^nA^nv=0$$
by $A\mathcal{C}=\mathcal{C}A$ and $AB=BA\mathcal{C}$. So $E^s$ is $B$  invariant.\end{proof}

\begin{lemma}\label{LThmD2}
Let $A, B, \mathcal{C}\in {\rm GL}(1, \mathbb R)$ such that $AB=BA\mathcal{C}$. Then
$\mathcal{C}={\rm \Id}$.
\end{lemma}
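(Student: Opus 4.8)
The plan is to exploit that $\mathrm{GL}(1,\mathbb R)$ is abelian, which collapses the Heisenberg relation to the triviality of $\mathcal C$. First I would note that every element of $\mathrm{GL}(1,\mathbb R)$ is a nonzero real number, and that multiplication of real numbers is commutative; hence $AB=BA$ holds automatically, with no use of any further hypothesis.

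Next I would substitute this commutativity into the given relation $AB=BA\mathcal C$ to obtain $BA=BA\mathcal C$. Since $BA$ is an invertible element of $\mathrm{GL}(1,\mathbb R)$ (equivalently, a nonzero real), I would left-multiply both sides by $(BA)^{-1}$ to cancel it, which yields $\mathcal C=1=\Id$.

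There is no genuine obstacle here: the whole content of the lemma is that in the one-dimensional case the commutator $[A,B]=ABA^{-1}B^{-1}$ is forced to be the identity, so the central factor $\mathcal C$ in the Heisenberg presentation must be trivial. I expect the entire argument to occupy two lines, and it serves only to rule out the degenerate one-dimensional situation in the case analysis leading to Theorem~D.
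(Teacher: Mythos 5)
Your proof is correct and is essentially identical to the paper's: both use commutativity of ${\rm GL}(1,\mathbb R)$ to turn $AB=BA\mathcal{C}$ into $BA=BA\mathcal{C}$ (the paper writes it as $AB=AB\mathcal{C}$) and then cancel the invertible factor to conclude $\mathcal{C}=\Id$.
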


\begin{proof}
Since ${\rm GL}(1, \mathbb R)$ is commutative, we have
$AB=BA\mathcal{C}=AB\mathcal{C}$, which means that $\mathcal{C}$ is identity.
\end{proof}

\begin{lemma}\label{LThmD3}
Let $A, B, \mathcal{C}\in {\rm GL}(2, \mathbb R)$ such that $AB=BA\mathcal{C}, A\mathcal{C}=\mathcal{C}A$
and $B\mathcal{C}=\mathcal{C}B$.  If the modular of each eigenvalue of $\mathcal{C}$ is equal to
1, then $\mathcal{C}^2={\rm \Id}$.
\end{lemma}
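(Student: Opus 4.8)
The plan is to reduce the statement to a clean dichotomy: either $\mathcal{C}$ is a scalar matrix or it is not, and to show the non-scalar alternative is impossible. First I would take determinants in the defining relation $AB=BA\mathcal{C}$. Since $A,B\in\mathrm{GL}(2,\mathbb{R})$ have nonzero determinant, this immediately yields $\det\mathcal{C}=1$. (This is consistent with, and in the scalar case already sharper than, the hypothesis that every eigenvalue of $\mathcal{C}$ has modulus $1$; in fact for $2\times2$ matrices the modulus hypothesis will turn out not to be needed beyond this determinant bookkeeping.) I will keep this identity in reserve for the final step.

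Next I would exploit the two commutation hypotheses $A\mathcal{C}=\mathcal{C}A$ and $B\mathcal{C}=\mathcal{C}B$, which say that both $A$ and $B$ lie in the centralizer $Z(\mathcal{C})=\{M\in M_2(\mathbb{R}):M\mathcal{C}=\mathcal{C}M\}$. The key linear-algebra fact I would invoke is that a non-scalar $2\times2$ matrix is non-derogatory, i.e. its minimal polynomial coincides with its characteristic polynomial and has degree $2$; consequently its centralizer is exactly the commutative algebra $\mathbb{R}[\mathcal{C}]=\mathrm{span}\{\mathrm{Id},\mathcal{C}\}$. Hence, if $\mathcal{C}$ were non-scalar, then $A$ and $B$ would both lie in a commutative algebra and would therefore satisfy $AB=BA$. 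But rewriting the defining relation as $\mathcal{C}=A^{-1}B^{-1}AB$ exhibits $\mathcal{C}$ as the commutator of $A$ and $B$, so $AB=BA$ would force $\mathcal{C}=\mathrm{Id}$, which is scalar and contradicts non-scalarity. Thus $\mathcal{C}$ must be scalar. Writing $\mathcal{C}=\lambda\,\mathrm{Id}$ with $\lambda\in\mathbb{R}$, the determinant identity gives $\lambda^2=1$, so $\lambda=\pm1$ and in either case $\mathcal{C}^2=\mathrm{Id}$.

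The hard part will be justifying the centralizer computation cleanly, and in particular making sure it is valid over $\mathbb{R}$ even when $\mathcal{C}$ has non-real eigenvalues (for instance a rotation block). The point is that the statement $Z(\mathcal{C})=\mathbb{R}[\mathcal{C}]$ is purely about the $\mathbb{R}$-algebra structure and does not require $\mathcal{C}$ to be diagonalizable over $\mathbb{R}$: it follows from the equality of dimensions $\dim_\mathbb{R} Z(\mathcal{C})=2=\dim_\mathbb{R}\mathbb{R}[\mathcal{C}]$ valid for any non-scalar $2\times2$ matrix, together with the trivial inclusion $\mathbb{R}[\mathcal{C}]\subseteq Z(\mathcal{C})$. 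If one prefers to avoid quoting the non-derogatory criterion, I would instead split into cases by the form of $\mathcal{C}$ over $\mathbb{C}$: when $\mathcal{C}$ has two distinct eigenvalues (possibly a complex-conjugate pair), $A$ and $B$ preserve its one-dimensional eigenlines and hence are simultaneously diagonalizable with $\mathcal{C}$, so they commute and $\mathcal{C}=\mathrm{Id}$, contradicting distinctness; when $\mathcal{C}$ has a single eigenvalue $\pm1$ it is either $\pm\mathrm{Id}$ (done) or a genuine Jordan block, in which case the same commutative-centralizer argument again forces $AB=BA$ and $\mathcal{C}=\mathrm{Id}$, a contradiction. Either route lands on $\mathcal{C}\in\{\pm\mathrm{Id}\}$ and hence $\mathcal{C}^2=\mathrm{Id}$.
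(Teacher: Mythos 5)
Your proof is correct, and it takes a genuinely different route from the paper's. The paper works over $\mathbb{C}$ with eigenvectors, in three steps: first, any eigenvalue $\lambda$ of $\mathcal{C}$ must satisfy $\lambda=\pm1$, because a common eigenvector $v$ of $\mathcal{C}$ and $A$ (with $Av=\gamma v$) makes $v$, $Bv$, $B^2v$ eigenvectors of $A$ with eigenvalues $\gamma$, $\lambda\gamma$, $\lambda^2\gamma$, which are pairwise distinct unless $\lambda=\pm 1$ --- impossible for a $2\times2$ matrix; second, if $-1$ is an eigenvalue, computing $\mathcal{C}$ in the basis $\{v,Bv\}$ gives $\mathcal{C}=-\mathrm{Id}$; third, if both eigenvalues equal $1$, a case analysis (simple versus repeated spectrum of $A$ and $B$, then upper-triangular forms) forces $\mathcal{C}=\mathrm{Id}$. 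You replace all of this by one structural fact: a non-scalar $2\times2$ real matrix is non-derogatory, so its centralizer is the commutative algebra $\mathbb{R}[\mathcal{C}]=\mathrm{span}\{\mathrm{Id},\mathcal{C}\}$; hence a non-scalar $\mathcal{C}$ would put $A$ and $B$ in a commutative algebra, giving $AB=BA$ and thus $\mathcal{C}=A^{-1}B^{-1}AB=\mathrm{Id}$, a contradiction, after which scalarity and $\det\mathcal{C}=1$ finish the argument. Your route is shorter, stays entirely over $\mathbb{R}$, and makes explicit that the modulus-one hypothesis is superfluous (in fairness, the paper's eigenvalue computation never actually invokes it either); note also that both arguments prove the stronger conclusion $\mathcal{C}=\pm\mathrm{Id}$, which is what the proof of Theorem D really uses. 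What the paper's hands-on computation buys is self-containedness at the level of bare linear algebra; since you also supply a justification of the centralizer fact (dimension count / cyclic vector, valid even when $\mathcal{C}$ has complex spectrum, e.g.\ a rotation), your proof is complete as written.
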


\begin{proof}
Consider $A, B, \mathcal{C}$ as matrices in ${\rm GL}(2, \mathbb C)$.\vspace{2mm}

{\noindent\bf Claim 1.} The eigenvalues of $\mathcal{C}$ are $1$ or $-1$. In
fact, assume that  $\mathcal{C}$ has an eigenvalue $\lambda$ with
$\lambda^n\not=1$ for $n=1, 2$. By $A\mathcal{C}=\mathcal{C}A$, we can take a nonzero
vector $v\in\mathbb C^2$ such that $\mathcal{C}v=\lambda v$ and $Av=\gamma
v$ for some $\gamma\not=0$. Then we have
$$ABv=BA\mathcal{C}v=\lambda\gamma Bv \ \ {\mbox{and}}\ \ AB^2v=B^2A\mathcal{C}^2v=\lambda^2\gamma Bv.$$
So, $v, Bv$ and $B^2v$ are three eigenvectors of $A$ with pairwise
different eigenvalues, which is a contradiction. Hence $\lambda=1$
or $\lambda^2=1$, which means $\lambda=1$ or $-1$. \vspace{2mm}

{\noindent\bf Claim 2.} If $-1$ is an eigenvalue of $\mathcal{C}$, then
$\mathcal{C}^2={\rm \id}$.  In fact, we can take $v\in \mathbb {C}^2$ such that
$\mathcal{C}v=-v$ and $v, Bv$ are two eigenvectors of $A$ with different
eigenvalues as shown in Claim 1. So, under the base $\{v, Bv\}$,
$\mathcal{C}$ has the form $\begin{array}{ccc}
 \mathcal{C}=\disp\left[
  \begin{array}{ccc}
  -1 & 0 \\
  0 & -1
  \end{array}
\right],
\end{array}
$ and $\mathcal{C}^2={\rm \Id}$.\vspace{2mm}

{\noindent\bf Claim 3.} If all eigenvalues of $\mathcal{C}$ are 1, then
$\mathcal{C}={\rm \id}$.  In fact, if $A$ has two different eigenvalues, then
$\mathcal{C}$ is diagonal by $A\mathcal{C}=\mathcal{C}A$. So $\mathcal{C}$ can only be identity in this
case. Similarly, if $B$ has only simple eigenvalues, then $\mathcal{C}={\rm
\id}$. Thus we may suppose $A, B, \mathcal{C}$ have only eigenvalues
$\lambda, \gamma, 1$  respectively. If $A$ is diagonal, then $A$
and $B$ are commutative, and then $\mathcal{C}$ is identity by $AB=BA\mathcal{C}$. So we may suppose the eigenspace $V_\lambda$ of $A$ corresponding to
$\lambda$ is of dimension $1$. 
Fix an nonzero vector $v\in V_\lambda$.  
Then $\mathcal{C}V_\lambda=V_\lambda$ by $A\mathcal{C}=\mathcal{C}A$.
Therefore $ABv=BA\mathcal{C}v=\lambda Bv$ and we get $BV_\lambda=V_\lambda$.  
So $Bv=\gamma v$. Take a vector $w$ linearly independent of $v$. 
Then, under the base $\{v, w\}$, 
$A, B, \mathcal{C}$ has the forms
\[
\begin{array}{ccc}
A=\left[
  \begin{array}{ccc}
  \lambda & x\\
  0 & \lambda
  \end{array}
  \right], \quad
\ \ \ B=\left[
  \begin{array}{ccc}
  \gamma & y \\
  0 & \gamma
  \end{array}
\right],  \quad \ \ \ \mathcal{C}=\left[
  \begin{array}{ccc}
  1 & z \\
  0 & 1
  \end{array}
\right],
\end{array}
\]
for some $x, y, z\in\mathbb C$, which implies $\mathcal{C}={\rm \Id}$ by
$AB=BA\mathcal{C}$.
\end{proof}

\begin{proof}[Proof of Theorem D]
From Theorem B, there is a homeomorphism $\Phi$ on $\mathbb T^n$
such that $\Phi^{-1}f\Phi([x])=[Ax+a]$,
$\Phi^{-1}g\Phi([x])=[Bx+b]$ and $\Phi^{-1}h\Phi([x])=[\mathcal{C}x+c]$ for
any $x\in\mathbb R^n$, where $A, B, \mathcal{C}\in {\rm GL}(n, \mathbb Z)$
and $a, b, c\in \mathbb R^n$. It is easy to check that $AB=BA\mathcal{C},
A\mathcal{C}=\mathcal{C}A$ and $B\mathcal{C}=\mathcal{C}B$. Clearly $A$ is hyperbolic.
Let $E^s\subset\mathbb R^n$ be the stable linear subspace of $A$.
We assume ${\dim}(E^s)=1$ or $2$.

By Lemma~\ref{LThmD1}, $E^s$ is $A$, $B$ and $\mathcal{C}$ invariant.  
Since $0\in\mathbb T^n$ is a common fixed point of $A$, $B$ and
$\mathcal{C}$, we get that the modular of each eigenvalue of
$\mathcal{C}$ is $1$ by Corollary~A.1. Applying Lemma~\ref{LThmD2}
and \ref{LThmD3} to $A|_{E^s}$, $B|_{E^s}$ and
$\mathcal{C}|_{E^s}$, we know that $\mathcal{C}|_{E^s}=\Id$ if
${\dim}(E^s)=1$ and $\mathcal{C}|_{E^s}=\pm\Id$ if
${\dim}(E^s)=2$. It follows that $\mathcal{C}$ or $-\mathcal{C}$,
as automorphism of $\mathbb T^n$, is identity on $\mathbb T^n$ by
the density of $[E^s]$ in $\mathbb T^n$. Hence $\mathcal{C}$ or
$-\mathcal{C}$ is identity as matrix in ${\rm GL}(n, \mathbb Z)$.
Thus $\Phi^{-1}h\Phi([x])=[\pm x+c]$. So $h$ is conjugate to
either a translation or an affine transformation $T_{-\Id, c}$ for
some $c\in \T^n$, and the formal case occurs if $f$ is a
codimensional $1$ Anosov diffeomorphism.

Clearly, if $h$ is conjugate to $T_{-\Id, c}$, then $h^2=\id$. If
$h$ is conjugate to a translation $T_{\Id, c}$, then we can get
$[x+kc]=\Phi^{-1}h^k\Phi([x])=[x]$ for some $k>0$ and for any
$[x]\in\mathbb T^n$ by using the fact that $h$ sends a fixed point
of $f$ to a fixed point of $f$, and $f$ has only finite number of
fixed points.
\end{proof}

\begin{remark}
From the proof it is easy to see that the integer $k$ can be
chosen as a factor of the number of the fixed points of $f$.
\end{remark}

\section{Smooth Rigidity: Proof of Theorem E}\label{S5}
\setcounter{equation}{0}

\subsection{Setting of the problem and the KAM scheme} Before proceeding to specifics we will show how the general KAM scheme described in \cite[Section 3.3]{Damjanovic4} and \cite[Section 1.1]{Damjanovic3}
is adapted to the $\H$ action~$\alpha$.

\smallskip
\noindent \textbf{Step 1}. \emph{Setting up the linearized equation}

Let $\widetilde{\alpha}$ be a small perturbation of $\alpha$. To prove the existence
of a $C^\infty$ map $H$ such that $\widetilde{\alpha}\circ H=H\circ \alpha$, we need to solve the nonlinear conjugacy problem
\begin{align*}
  \alpha\circ\Omega-\Omega\circ\alpha=-R\circ(I+\Omega)
\end{align*}
where $\widetilde{\alpha}=\alpha+R$ and $H=I+\Omega$; and the corresponding linearized  conjugacy equation is
\begin{align}\label{for:10}
  \alpha\circ\Omega-\Omega\circ\alpha=-R
\end{align}
for small $\Omega$ and $R$.

Lemma \ref{le:2} shows that obtaining a $C^\infty$ conjugacy
for one ergodic generator suffices for the proof of Theorem E. Hence we just need to solve equation
\eqref{for:10} for one ergodic generator.

\smallskip
\noindent \textbf{Step 2}. \emph{Solving the linearized conjugacy equation for a particular
element.}

We classify the obstructions for solving the linearized equation \eqref{for:10} for an individual generator (see Lemma \ref{le:8} and \ref{le:5})
and obtain tame estimates are obtained for the solution. This means finite loss
of regularity in the chosen collection of norms in the Fr$\acute{e}$chet spaces,
such as $C^r$ or Sobolev norms.

\smallskip
\noindent \textbf{Step 3}. \emph{Constructing projection of the perturbation to the twisted cocycle space.}

First note that $R$ is a twisted cocycle not over $\alpha$ but over $\widetilde{\alpha}$ (see Lemma 3.3 of \cite{Damjanovic4}) thus \eqref{for:10} is not a twisted coboundary
equation over the linear action $\alpha$, just an approximation. Second is that even if \eqref{for:10} is a twisted coboundary over $\alpha$, it is impossible to produce a $C^\infty$ conjugacy for a single ergodic generator of the action. Therefore, we consider three generators,
and reduce the problem of solving the linearized equation \eqref{for:10} to solving
simultaneously the following system:
\begin{align}\label{for:11}
  A\circ\Omega-\Omega\circ A&=-R_A\notag\\
  B\circ\Omega-\Omega\circ B&=-R_B\notag\\
  \mathcal{C}\circ\Omega-\Omega\circ \mathcal{C}&=-R_\mathcal{C}
\end{align}
where $A$ and $B$ are ergodic generators and $\mathcal{C}$ is the center: $A:=\alpha(g_1)$, $B:=\alpha(g_2)$, $\mathcal{C}:=\alpha(g_3)$ and
$R_A:=R(g_1)$, $R_B:=R(g_2)$, $R_\mathcal{C}:=R(g_3)$.

As mentioned above, $R$ does not satisfy this twisted cocycle condition:
\begin{align}
 L(R_A,R_\mathcal{C})&\stackrel{\rm def}{=}\mathcal{C}R_A-R_A\circ \mathcal{C}-(AR_\mathcal{C}-R_\mathcal{C}\circ A)=0,\notag\\
 L(R_B,R_\mathcal{C})&\stackrel{\rm def}{=}\mathcal{C}R_B-R_B\circ \mathcal{C}-(BR_\mathcal{C}-R_\mathcal{C}\circ B)=0,\notag\\
 L(R_A,R_B)&\stackrel{\rm def}{=}R_A\circ B+AR_B-R_B\circ A\mathcal{C}-BR_\mathcal{C}\circ A-B\mathcal{C}R_A=0.
\end{align}
However the difference
\begin{align*}
 L(R_A,R_B), \qquad L(R_B,R_\mathcal{C})\quad\text{ and }\quad L(R_A,R_B)
\end{align*}
is quadratically small with respect to $R$ (see Lemma \ref{le:3}). More precisely, the perturbation $R$ can be split into two terms
\begin{align*}
 R=\mathcal{P}R+\mathcal{E}(R)
\end{align*}
so that $\mathcal{P}R$ is in the space of twisted cocycles and the error $\mathcal{E}(R)$ is bounded by the size of $L$ with the fixed loss of regularity (see Lemma \ref{le:4}).
More precisely, the system
\begin{align}
  -\mathcal{P}R_A&=-\big(R_A-\mathcal{E}(R_A)\big)=A\Omega-\Omega\circ A,\notag\\
   -\mathcal{P}R_B&=-\big(R_B-\mathcal{E}(R_B)\big)=B\Omega-\Omega\circ B,\notag\\
   -\mathcal{P}R_\mathcal{C}&=-\big(R_\mathcal{C}-\mathcal{E}(R_\mathcal{C})\big)=\mathcal{C}\Omega-\Omega\circ \mathcal{C}
\end{align}
has a common solution $\Omega$ after subtracting a part quadratically small to $R$.

\smallskip
\noindent \textbf{Step 4}. \emph{Conjugacy transforms the perturbed action into an action
quadratically close to the target.}

The common approximate solution $\Omega$ to the equations \eqref{for:11} above provides a new perturbation
\begin{align*}
\widetilde{\alpha}^{(1)}\stackrel{\rm def}{=}H^{-1}\circ\widetilde{\alpha}\circ H
\end{align*}
where $H=I+\Omega$, is much closer to $\alpha$ than $\widetilde{\alpha}$; i.e., the new error
\begin{align*}
 R^{(1)}\stackrel{\rm def}{=}\widetilde{\alpha}^{(1)}-\alpha
\end{align*}
is expected to be small with respect to the old error $R$.

\smallskip
\noindent \textbf{Step 5}. \emph{The process is iterated and the conjugacy is obtained.}

The iteration process is set and is carried out, producing a $C^\infty$ conjugacy which works for the action
generated by the three generators $A$, $B$ and $\mathcal{C}$. Ergodicity assures that it works
for all the other elements of the action $\alpha$.

\smallskip
What is described above highlights the essential features of the KAM scheme for the $\H$ action on torus.
The last two steps can follow
Section 5.2-5.4 in \cite{Damjanovic4} word by word without modification. Hence completeness of Step 2 and 3
admits the conclusion of Theorem E.

At the end of the this section, we prove a simple lemma which shows that obtaining a $C^\infty$ conjugacy
for one ergodic generator suffices for the proof of Theorem E.
\begin{lemma}\label{le:2}
Let $\alpha$ be a Heisenberg group $\mathcal{H}$ action by automorphisms of $\TT^N$ such that for
some $g\in \mathcal{H}$ the automorphism $\alpha(g)$ is ergodic. Let $\widetilde{\alpha}$ be a $C^1$ small perturbation
of $\alpha$ such that there exists a $C^\infty$ map $H:\TT^N\rightarrow\TT^N$ which is $C^1$ close to identity
and satisfies
\begin{align*}
 \widetilde{\alpha}(g)\circ H=H\circ\alpha(g).
\end{align*}
Then $H$ conjugates the corresponding maps for all the other elements of the action;
i.e., for all $h\in \mathcal{H}$ we have
\begin{align}\label{for:23}
 \widetilde{\alpha}(h)\circ H=H\circ\alpha(h).
\end{align}
\end{lemma}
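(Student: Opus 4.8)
The plan is to transport the whole problem to the conjugated homomorphism $\Psi:=H^{-1}\circ\widetilde{\alpha}(\cdot)\circ H$ and to show $\Psi=\alpha$ on a generating set of $\mathcal{H}$. Since conjugation by the fixed diffeomorphism $H$ is a group automorphism of $\Diff^\infty(\TT^N)$, the map $\Psi:\mathcal{H}\to\Diff^\infty(\TT^N)$ is again a homomorphism, and the hypothesis $\widetilde{\alpha}(g)\circ H=H\circ\alpha(g)$ reads $\Psi(g)=\alpha(g)=:f$. Because $\widetilde{\alpha}$ is $C^1$-close to $\alpha$ on the finite generating set $\{A,B,\mathcal{C}\}$ and $H$ is $C^1$-close to the identity, each of $\Psi(A),\Psi(B),\Psi(\mathcal{C})$ is $C^1$-close to $\alpha(A),\alpha(B),\alpha(\mathcal{C})$. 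I would measure the conjugacy defect of a generator $w$ by $\Phi_w:=\alpha(w)^{-1}\circ\Psi(w)$, a diffeomorphism $C^1$-close to the identity, so that the goal becomes $\Phi_w=\id$ for $w\in\{A,B,\mathcal{C}\}$; once this is known the homomorphism property forces $\Psi=\alpha$ on all of $\mathcal{H}$, which is exactly \eqref{for:23}.

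The technical heart will be the following rigidity fact: a diffeomorphism $\Phi$ of $\TT^N$ that is $C^1$-close to the identity and commutes with the ergodic toral automorphism $f$ must equal the identity. To prove it I would lift $\Phi$ to $\widetilde{\Phi}(x)=x+u(x)$ on $\RR^N$, with $u$ a small $\ZZ^N$-periodic map, and lift $f$ to its linear representative $L\in SL(N,\ZZ)$. The commutation $\Phi f=f\Phi$ lifts to $u(Lx)=Lu(x)+m$ for some $m\in\ZZ^N$; integrating against Haar measure gives $(I-L)\int u=m$, and the smallness of $u$ forces $m=0$, so $u(Lx)=Lu(x)$. Expanding in Fourier series and comparing modes yields $\hat u_{L^{T}k}=L^{-1}\hat u_k$, hence $\hat u_{(L^{T})^{r}k}=L^{-r}\hat u_k$ along every $L^{T}$-orbit in $\ZZ^N$. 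Ergodicity of $f$ means $L$ has no root-of-unity eigenvalue, so each orbit with $k\neq 0$ is infinite; square-summability of the coefficients then forces $L^{-r}\hat u_k\to 0$ as $r\to\pm\infty$, which is impossible for a nonzero vector under an invertible linear map, so $\hat u_k=0$ for all $k\neq 0$. Thus $u$ is a constant $c$ and $\Phi$ is translation by $c$; commuting with $L$ gives $(I-L)c\in\ZZ^N$, and since $1$ is not an eigenvalue of $L$ these solutions form a finite set whose only member near $0$ is $0$, whence $c=0$ and $\Phi=\id$.

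With this in hand I would first treat the centre. Since $\mathcal{C}$ is central it commutes with $g$, so $\Psi(\mathcal{C})$ commutes with $\Psi(g)=f$ while $\alpha(\mathcal{C})$ commutes with $f$; a direct computation then shows $\Phi_{\mathcal{C}}=\alpha(\mathcal{C})^{-1}\Psi(\mathcal{C})$ commutes with $f$, and the rigidity fact gives $\Phi_{\mathcal{C}}=\id$, i.e. $\Psi(\mathcal{C})=\alpha(\mathcal{C})$. This identity is what untwists the noncommuting generators: for $w\in\{A,B\}$ the relation $wg=gw\mathcal{C}^{j}$ holds for some integer $j$ because $\mathcal{H}$ is $2$-step nilpotent with $[\mathcal{H},\mathcal{H}]=\langle\mathcal{C}\rangle$. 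Applying $\Psi$ and $\alpha$ to this relation, substituting $\Psi(w)=\alpha(w)\Phi_w$, and using that $\Phi_w$ commutes with $\alpha(\mathcal{C})$ (again because $\mathcal{C}$ is central, so $\Psi(w)$ commutes with $\Psi(\mathcal{C})=\alpha(\mathcal{C})$), the two powers $\mathcal{C}^{\pm j}$ cancel and the twisted relation collapses to $\Phi_w f=f\Phi_w$. The rigidity fact again yields $\Phi_w=\id$, and the proof is complete.

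I expect the genuine work to be the translation-rigidity step of the second paragraph — in particular making the lift and the equation $u(Lx)=Lu(x)$ rigorous, ruling out the integer translation $m$, and excluding nonzero finite-order translations $c$ — whereas the passage through the Heisenberg relations in the last paragraph is purely algebraic and should be routine once $\Psi(\mathcal{C})=\alpha(\mathcal{C})$ is in place.
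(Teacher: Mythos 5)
Your proposal is correct and follows essentially the same route as the paper: handle the central element first (it commutes with the ergodic $\alpha(g)$), use centrality to untwist the Heisenberg relation $wg=gw\mathcal{C}^j$ for the remaining generators, and conclude each conjugacy defect is trivial because a diffeomorphism $C^1$-close to the identity commuting with an ergodic toral automorphism must be the identity. The only difference is that the paper imports this last rigidity fact from Lemma 3.2 of \cite{Damjanovic4}, whereas you prove it from scratch by lifting and Fourier analysis (your argument is sound, including the two-sided decay $L^{-r}\hat u_k\to 0$ as $r\to\pm\infty$ forcing $\hat u_k=0$ via the stable/center/unstable splitting), which makes your write-up self-contained but otherwise identical in structure.
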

\begin{proof}
Let $h$ be any element in $\mathcal{H}$ other than $g$. If $hg=gh$ it follows from \eqref{for:23} and
commutativity that
\begin{align*}
 &\alpha(g)\circ \tilde{h}=\tilde{h}\circ\alpha(g)
 \end{align*}
where $\tilde{h}=\alpha(h)\circ H^{-1}\circ\widetilde{\alpha}(h)^{-1}\circ H$.

If $hg=ghc$, where $c$ is the center of $\mathcal{H}$, then similarly we obtain
\begin{align*}
 &\alpha(g)\circ (\alpha(h)\circ H^{-1}\circ\widetilde{\alpha}(h)^{-1}\circ H)\\
 &=\alpha(h)\circ (\alpha(gc^{-1})\circ H^{-1})\circ\widetilde{\alpha}(h)^{-1}\circ H\\
 &\overset{(1)}{=}\alpha(h)\circ H^{-1}(\widetilde{\alpha}(gc^{-1})\circ\widetilde{\alpha}(h)^{-1})\circ H\\
 &=\alpha(h)\circ H^{-1}\widetilde{\alpha}(h^{-1})\circ(\widetilde{\alpha}(g)\circ H)\\
 &=(\alpha(h)\circ H^{-1}\widetilde{\alpha}(h^{-1})\circ H)\circ\alpha(g).
 \end{align*}
Here $(1)$ from the fact the $H$ also conjugates $\alpha(c)$ and $\widetilde{\alpha}(c)$ which is from previous analysis.

Then the conclusion follows immediately from the following fact (see Lemma 3.2 of \cite{Damjanovic4}): for any $C^1$ small enough map $F:\TT^N\rightarrow\TT^N$ , if $AF=F\circ A$, where $A\in GL(N,\ZZ)$ and is ergodic, then $F=0$.
\end{proof}

\subsection{Some notations and basic facts}

\begin{enumerate}
\item It is a result of Kronecker \cite{Kronecker} which
states that an integer matrix with all eigenvalues on the unit circle has to have
all eigenvalues roots of unity. Then there exists $n\in\NN$ such that all eigenvalues of $\mathcal{C}^n$ are $1$.
Using relation $AB=BA\mathcal{C}$, we obtain $AB^n=B^nA\mathcal{C}^n$. Hence we can assume that all eigenvalues of $\mathcal{C}$ are $1$, otherwise we
just turn to $A$, $B^n$ and $\mathcal{C}^n$ instead of $A$, $B$ and $\mathcal{C}$.

 \item The dual map $A^*$ on $\ZZ^N$ induces a decomposition of $\RR^N$ into expanding,
neutral and contracting subspaces. We will denote the expanding subspace by $V_1(A)$, the contracting subspace by $V_2(A)$ and the neutral subspace by $V_2(A)$.
\begin{align*}
 \RR^N=V_1(A)\bigoplus V_2(A) \bigoplus V_3(A).
\end{align*}
All three subspaces $V_i(A)$, $i=1$, $2$, $3$ are $A$ invariant and
\begin{align}
  \norm{A^iv}&\geq C\rho^i\norm{v}, &\rho>1,\quad &i\geq 0,\quad &v\in V_1(A), \notag\\
  \norm{A^iv}&\geq C\rho^{-i}\norm{v}, &\rho>1,\quad &i\leq 0,\quad &v\in V_3(A), \notag\\
  \norm{A^iv}&\geq C\abs{i}^{-N}\norm{v}, &\rho>1,\quad &i\neq 0,\quad &v\in V_2(A)\label{for:4}.
\end{align}

  \item For $v\in\ZZ^N$, $\abs{v}\overset{\text{def}}{=}\max\{\norm{\pi_1(v)},\,\norm{\pi_2(v)},\,\norm{\pi_3(v)}\}$ where $\norm{\cdot}$ is Euclidean
norm and $\pi_i(v)$ are projections of $v$ to subspaces $V_i$ ($i=1,\,2,\,3$) from \eqref{for:4},
that is, to the expanding, neutral, and contracting subspaces of $\RR^N$
for $A$ (or $B$); we will use the norm which is more convenient in a particular situation;
those are equivalent norms, the choice does not affect any results).

\smallskip
  \item For $v\in\ZZ^N$ we say $v$ is mostly in $i(A)$ for $i=1,\,2,\,3$ and will write $v\hookrightarrow i(A)$, if
the projection $\pi_i(v)$ of $v$ to the subspace $V_i$ corresponding to $A$ is sufficiently
large:
\begin{align*}
 \abs{v}=\norm{\pi(v)}.
\end{align*}
The notation $v\hookrightarrow 1,2(A)$ will be used for $v$ which is mostly in $1(A)$ or mostly
in $2(A)$.

\smallskip
\item Call $n\in\ZZ^N$ minimal
and denote it by $n_{\text{min}}$ if $v$ is the lowest point on its $A$ orbit in the sense that $n\hookrightarrow 3(A)$
and $An\hookrightarrow 1,2(A)$. There is one such
minimal point on each nontrivial dual $A$ orbit, we choose one on each dual $A$ orbit
and denote it by $n_{\text{min}}$. Then $n_{\text{min}}$ is substantially large both in $1,2(A)$ and in $3(A)$.

\smallskip

\item In what follows, $C$ will denote any constant that depends only on the given
linear action $\alpha$ with chosen generators $A$, $B$ and $\mathcal{C}$ and on the dimension of the
torus. $C_{x,y,z,\cdots}$ will denote any constant that in addition to the above depends
also on parameters $x$, $y$, $z$, $\cdots$.

\smallskip
\item Let $\theta$ be a $C^\infty$ function. Then we can write $\theta=\sum_{n\in\ZZ^N}\widehat{\theta}_ne_n$
where $e_n=e^{2\pi in\cdot x}$ are the characters. Then
\begin{itemize}
  \item [(i)] $\norm{\theta}_a\overset{\text{def}}{=}\sup_n\abs{\widehat{\theta}_n}\abs{n}^a$, $a>0$.

  \smallskip

  \item [(ii)] The following relations hold (see, for example, Section 3.1 of \cite{LLAVE}):
  \begin{align*}
   \norm{\theta}_r\leq C\norm{\theta}_{C^r},\qquad \norm{\theta}_{C^r}\leq C\norm{\theta}_{r+\sigma}
  \end{align*}
  where $\sigma>N+1$, and $r\in\NN$.

  \smallskip
  \item [(iii)] For any $F\in SL(N,\ZZ)$ $(\widehat{\theta\circ F})_n=\widehat{\theta}_{(F^\tau)^{-1}n}$ where $F^\tau$ denotes transpose matrix.
  We call $(F^\tau)^{-1}$ the dual map on $\ZZ^N$. To simplify the notation in the rest of the paper, whenever there is no
confusion as to which map we refer to we will denote the dual map by the same
symbol $F$.
\end{itemize}

\smallskip
\item For a map $\mathcal{F}$ with coordinate functions $f_i$ ($i=1,\cdots,k$) define $\norm{\mathcal{F}}_a\overset{\text{def}}{=}\max_{1\leq i\leq k}\norm{f_i}_a$. For two maps $\mathcal{F}$ and $\mathcal{G}$ define $\norm{\mathcal{F},\mathcal{G}}_a\overset{\text{def}}{=}\{\norm{\mathcal{F}}_a,\norm{\mathcal{G}}_a\}$.
$\norm{\mathcal{F}}_{C^r}$ and $\norm{\mathcal{F},\mathcal{G}}_{C^r}$ are defined similarly. For any $n\in \ZZ^N$
$\widehat{\mathcal{F}}_n\overset{\text{def}}{=}
((\widehat{f_1})_n,\cdots,(\widehat{f_k})_n)$.
\end{enumerate}

\subsection{Orbit growth for the dual action} In this section the crucial estimates for
the exponential growth along individual orbits of the dual action are obtained. The following follows directly from the proof of  Lemma 4.3 in \cite{Damjanovic4}:
\begin{lemma}\label{le:7} Let $Q_i$ be integer matrices in $SL(N,\ZZ)$, $1\leq i\leq m$ and suppose there exist constant $C,\,\tau>0$ such that for every non-zero integer vector $v\in\ZZ^N$ and for any $k=(k_1,\cdots,k_m)\in\ZZ^m$
\begin{align}\label{for:9}
\norm{Q_1^{k_1}\cdots Q_m^{k_m}n}\geq C\exp\{\tau\norm{k}\}\norm{n}^{-N}.
\end{align}
\begin{enumerate}

\smallskip

  \item [a)]\label{for:2} For any $C^{\infty}$ function $\varphi$ on the torus $\TT^N$ and any $y\in\CC$
the following sums:
\begin{align*}
  S_K(\varphi,n,y,p)=\sum_{k=(k_1,\cdots,k_m)\in K}y^{\norm{k}}\widehat{\varphi}_{Q_1^{k_1}\cdots Q_m^{k_m}n}
\end{align*}
converge absolutely for any $K\subset\ZZ^m$.
\smallskip

  \item [b)] Assume in addition to the assumptions in $a)$ that for a vector $n\in\ZZ^N$ and for
every $k=(k_1,\cdots,k_m)\in K=K(n)\subset\ZZ^m$ we have
\begin{align}\label{for:1111}
 p_1(\norm{k})\norm{Q_1^{k_1}\cdots Q_m^{k_m}n}\geq \norm{n}
\end{align}
where $p_1$ is a polynomial, then
\begin{align*}
\abs{S_K(\varphi,n,y,p)}\leq C_{a,y,\delta}\norm{\varphi}_a\norm{n}^{-a+\kappa_y+\delta}
\end{align*}
for any $a>\kappa_{y}\stackrel{\rm def}{=}\frac{N+1}{\tau}\abs{\log\abs{y}}$.
\smallskip

  \item [c)] If the assumptions \eqref{for:1111} is also satisfied for every $n\in\ZZ^N$, then the function
  \begin{align*}
   S(\varphi)\stackrel{\rm def}{=}\sum_{n\in\ZZ^N}S_{K(n)}(\varphi,n,y,p)e_n
  \end{align*}
is a $C^\infty$ function if $\varphi$ is. Moreover, the following norm comparison holds:
\begin{align*}
 \norm{S(\varphi)}_{C^r}\leq C_{r,y}\norm{\varphi}_{r+\sigma}
\end{align*}
for any $r\geq 0$ and $\sigma>N+2+[\kappa_{y}]$.

\end{enumerate}
\end{lemma}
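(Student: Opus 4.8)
The statement is quoted as following directly from the proof of Lemma 4.3 in \cite{Damjanovic4}, so the plan is to transcribe that three-part argument for the present matrices $Q_1,\dots,Q_m$. Throughout I write $Q^{k}:=Q_1^{k_1}\cdots Q_m^{k_m}$, use the equivalence of the norms $\abs{\cdot}$ and $\norm{\cdot}$ on $\ZZ^N$, and recall that since $\varphi$ is $C^\infty$ the quantity $\norm{\varphi}_a=\sup_n\abs{\widehat{\varphi}_n}\abs{n}^a$ is finite for every $a>0$, i.e. $\abs{\widehat{\varphi}_{Q^{k}n}}\le\norm{\varphi}_a\norm{Q^{k}n}^{-a}$ for all $a$. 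For part (a) I would simply feed the exponential lower bound \eqref{for:9} into this Fourier decay: substituting $\norm{Q^{k}n}^{-a}\le C\exp\{-a\tau\norm{k}\}\norm{n}^{aN}$ gives
\[
\sum_{k\in K}\abs{y}^{\norm{k}}\,\abs{\widehat{\varphi}_{Q^{k}n}}
\le C\,\norm{\varphi}_a\,\norm{n}^{aN}\sum_{k\in\ZZ^m}\exp\bigl\{(\log\abs{y}-a\tau)\norm{k}\bigr\}.
\]
Because the number of $k$ with $\norm{k}=j$ is polynomial in $j$, the last sum converges as soon as $a>\tau^{-1}\log\abs{y}$, and such $a$ is available; this gives absolute convergence for every $K\subset\ZZ^m$.

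The heart of the matter is the quantitative bound (b), and this is the step I expect to be the main obstacle. Here I have two competing lower bounds on $\norm{Q^{k}n}$: the exponential bound \eqref{for:9}, strong for large $\norm{k}$, and the new hypothesis \eqref{for:1111}, which reads $\norm{Q^{k}n}\ge\norm{n}/p_1(\norm{k})$ and is strong for small $\norm{k}$. I would split $K(n)$ at a threshold $\norm{k}\le T$ versus $\norm{k}>T$. On the low range I use \eqref{for:1111} to extract the factor $\norm{n}^{-a}$, paying $\abs{y}^{\norm{k}}\le\abs{y}^{T}$ together with the harmless polynomial factor $\sum_{\norm{k}\le T}p_1(\norm{k})^a$; on the high range I use \eqref{for:9} to extract $\exp\{-a\tau\norm{k}\}\norm{n}^{aN}$ and sum the geometric tail. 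The single choice $T=\tfrac{N+1}{\tau}\log\norm{n}$ balances the two ranges: for $\abs{y}\ge1$ the low range contributes $\norm{n}^{-a}\abs{y}^{T}=\norm{n}^{-a+\kappa_y}$ and the high range contributes $\norm{n}^{aN}\exp\{T(\log\abs{y}-a\tau)\}=\norm{n}^{-a+\kappa_y}$, each up to a polylogarithmic factor absorbed into $\norm{n}^{\delta}$ (the case $\abs{y}<1$ only improves the exponent). This is precisely where $\kappa_y=\tfrac{N+1}{\tau}\abs{\log\abs{y}}$ is forced, and it yields
\[
\abs{S_K(\varphi,n,y,p)}\le C_{a,y,\delta}\,\norm{\varphi}_a\,\norm{n}^{-a+\kappa_y+\delta}
\]
for $a>\kappa_y$, the strict inequality ensuring geometric convergence of the tail. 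The delicate point is verifying that this one threshold controls both ranges simultaneously and that the lattice-counting and $p_1$ factors are genuinely swallowed by the arbitrarily small loss $\norm{n}^{\delta}$.

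For part (c) I would note that $\widehat{S(\varphi)}_n=S_{K(n)}(\varphi,n,y,p)$, so (b) says the Fourier coefficients of $S(\varphi)$ decay rapidly. Fixing a target regularity $r$ and using the norm comparison $\norm{\theta}_{C^r}\le C\norm{\theta}_{r+\sigma'}$ (valid for any $\sigma'>N+1$) together with (b),
\[
\norm{S(\varphi)}_{C^r}\le C\,\norm{S(\varphi)}_{r+\sigma'}
=C\sup_{n}\abs{\widehat{S(\varphi)}_n}\,\abs{n}^{\,r+\sigma'}
\le C_{r,y}\,\norm{\varphi}_{r+\sigma},
\]
where one takes $a=r+\sigma$ with $\sigma>N+2+[\kappa_y]$, so that $\sigma=\sigma'+\kappa_y+\delta$ for some $\sigma'>N+1$ and $\delta>0$ and the exponent $-a+\kappa_y+\delta+r+\sigma'$ of $\norm{n}$ is non-positive, making the supremum finite. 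Since $\varphi$ is $C^\infty$ the right-hand side is finite for every $r$, hence $S(\varphi)\in C^r$ for all $r$ and is therefore $C^\infty$, completing the proof.
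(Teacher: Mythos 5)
Your proposal is correct and takes essentially the same route as the paper, which offers no independent argument but simply defers to the proof of Lemma~4.3 in \cite{Damjanovic4}; your reconstruction (Fourier decay against the exponential bound \eqref{for:9} for part a), the threshold splitting at $T=\frac{N+1}{\tau}\log\norm{n}$ balancing \eqref{for:9} against \eqref{for:1111} for part b), and the norm-comparison bookkeeping for part c)) is exactly that cited argument, with the constants and the choice of $\delta$ handled correctly.
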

\begin{corollary}\label{cor:2}
Suppose $Q_i,\,P_i\in \mathcal{H}$, $1\leq i\leq m$ and $K\subseteq \ZZ^m$. Set $y=\max_{1\leq i\leq m}\norm{P_i}$. If condition \eqref{for:9} is satisfied  for any $k\in K(n)$ then for any $C^{\infty}$ function $\varphi$ on the torus $\TT^N$ we obtain
\begin{enumerate}
  \item \label{for:26}the following sums:
\begin{align*}
  S_K(\varphi,n,P;Q)=\sum_{k=(k_1,\cdots,k_m)\in K(n)}P_1^{k_m}\cdots P_m^{k_1}\widehat{\varphi}_{Q_1^{k_1}\cdots Q_m^{k_m}n}
\end{align*}
converge absolutely, where $P$ stands for $P_1,\cdots,P_m$ and $Q$ stands for $Q_1,\cdots,Q_m$.

\smallskip
  \item \label{for:34}Assume in addition that for a vector $n\in\ZZ^N$ and for
every $k=(k_1,\cdots,k_m)\in K=K(n)\subset\ZZ^m$ we have
\begin{align*}
 p_1(\norm{k})\norm{Q_1^{k_1}\cdots Q_m^{k_m}n}\geq \norm{n}
\end{align*}
where $p_1$ is a polynomial, then
\begin{align*}
\abs{S_K(\varphi,n,P;Q)}\leq C_{a,y,\delta}\norm{\varphi}_a\norm{n}^{-a+\kappa_1+\delta}
\end{align*}
for any $a>\kappa_{P,Q}\stackrel{\rm def}{=}\frac{N+1}{\tau} \abs{\log\abs{y}}$.

\smallskip
\item \label{for:27}If the assumptions \eqref{for:1111} is satisfied for every $n\in\ZZ^N$, then the function
  \begin{align*}
   S(\varphi,P;Q)\stackrel{\rm def}{=}\sum_{n\in\ZZ^N}S_{K(n)}(\varphi,n,P,Q)e_n
  \end{align*}
is a $C^\infty$ function if $\varphi$ is. Moreover, the following norm comparison holds:
\begin{align*}
 \norm{S(\varphi)}_{C^r}\leq C_{r,y}\norm{\varphi}_{r+\sigma}
\end{align*}
for any $r\geq 0$ and $\sigma>N+2+[\kappa_{P,Q}]$.
\end{enumerate}
\end{corollary}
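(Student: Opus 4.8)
The plan is to derive all three assertions from Lemma~\ref{le:7} by replacing the scalar weight $y^{\norm{k}}$ appearing there with the matrix product $P_1^{k_m}\cdots P_m^{k_1}$ and dominating the resulting vector-valued sums by the scalar sums already controlled in the lemma. The essential elementary step is an operator-norm bound: since $y=\max_{1\leq i\leq m}\norm{P_i}$, submultiplicativity gives
\begin{align*}
 \norm{P_1^{k_m}\cdots P_m^{k_1}}\leq\prod_{i=1}^m\norm{P_i}^{\abs{k_i}}\leq y^{\norm{k}}
\end{align*}
for every $k=(k_1,\cdots,k_m)\in\ZZ^m$, independently of the order in which the factors are multiplied. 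Writing the Fourier coefficients $\widehat{\varphi}_n$ as vectors and introducing the nonnegative scalar majorant $c_n\stackrel{\rm def}{=}\norm{\widehat{\varphi}_n}$, the convention $\norm{\mathcal{F}}_a=\max_i\norm{f_i}_a$ yields $\sup_n c_n\abs{n}^a\leq C\norm{\varphi}_a$, so $c_\bullet$ obeys the same polynomial decay as the coefficients of a scalar $C^\infty$ function.

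With this domination in hand, each part reduces to the corresponding part of Lemma~\ref{le:7}. For part~\eqref{for:26} I would estimate termwise
\begin{align*}
 \norm{S_K(\varphi,n,P;Q)}
 &\leq\sum_{k\in K(n)}\norm{P_1^{k_m}\cdots P_m^{k_1}}\,\norm{\widehat{\varphi}_{Q_1^{k_1}\cdots Q_m^{k_m}n}}\\
 &\leq\sum_{k\in K(n)}y^{\norm{k}}c_{Q_1^{k_1}\cdots Q_m^{k_m}n},
\end{align*}
and the last sum converges absolutely by Lemma~\ref{le:7}(a) applied with the scalar weight $y$. For part~\eqref{for:34}, the same bound together with hypothesis~\eqref{for:1111} reduces the estimate on $\abs{S_K(\varphi,n,P;Q)}$ to the one furnished by Lemma~\ref{le:7}(b); since $\kappa_{P,Q}$ is by definition $\kappa_y$ with $y=\max_i\norm{P_i}$, the exponent $-a+\kappa_{P,Q}+\delta$ and the admissible range $a>\kappa_{P,Q}$ come out exactly as stated. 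For part~\eqref{for:27}, the majorant bound shows that each coordinate function of $S(\varphi,P;Q)$ has Fourier coefficients dominated by the scalar sums controlled in Lemma~\ref{le:7}(c) (built from $c_\bullet$); hence each coordinate is $C^\infty$ and obeys $\norm{\cdot}_{C^r}\leq C_{r,y}\norm{\varphi}_{r+\sigma}$ with $\sigma>N+2+[\kappa_{P,Q}]$, and taking the maximum over coordinates via $\norm{\mathcal{F}}_{C^r}=\max_i\norm{f_i}_{C^r}$ completes the claim.

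The only point requiring care is the first paragraph: establishing the operator-norm domination uniformly in $k$ and verifying that the passage from a scalar weight to the matrix product neither enlarges the exponent $\kappa$ nor disturbs the hypotheses of Lemma~\ref{le:7}, which involve only the $Q_i$ orbit of $n$ through \eqref{for:9} and \eqref{for:1111}. Because the weight enters the lemma solely through $\abs{\log\abs{y}}$ and $y=\max_i\norm{P_i}$ is precisely the operator-norm analogue of the scalar modulus, no loss beyond the crude submultiplicative bound occurs, and the reversed indexing of the $P_i$ relative to the $Q_i$ is immaterial to every estimate. One should note that if $K(n)$ contains negative exponents the constant $y$ must be taken as $\max_i\max\{\norm{P_i},\norm{P_i^{-1}}\}$ to preserve the submultiplicative bound; this affects none of the stated forms, since it only replaces $y$ by a comparable constant in the definition of $\kappa_{P,Q}$.
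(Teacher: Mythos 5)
Your proposal is correct and is essentially the paper's own proof: the paper likewise dominates $\norm{P_1^{k_m}\cdots P_m^{k_1}\widehat{\varphi}_{Q_1^{k_1}\cdots Q_m^{k_m}n}}$ by $y^{\norm{k}}\norm{\widehat{\varphi}_{Q_1^{k_1}\cdots Q_m^{k_m}n}}$ and then invokes Lemma~\ref{le:7} part by part, exactly as you do. Your closing caveat about negative exponents (replacing $y$ by $\max_i\max\{\norm{P_i},\norm{P_i^{-1}}\}$) is a point the paper's one-line proof silently glosses over, so including it only strengthens the argument.
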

\begin{proof}
Since
\begin{align*}
 &\sum_{k=(k_1,\cdots,k_m)\in K(n)}\norm{P_1^{k_m}\cdots P_m^{k_1}\widehat{\varphi}_{Q_1^{k_1}\cdots Q_m^{k_m}n}}
 \leq \sum_{k=(k_1,\cdots,k_m)\in K(n)}y^{\norm{k}}\norm{\widehat{\varphi}_{Q_1^{k_1}\cdots Q_m^{k_m}n}},
\end{align*}
we get the conclusion immediately from above lemma.
\end{proof}
 In the subsequent
part we prove the exponential growth along individual orbits of ergodic elements. It may be viewed as a generalization of Lemma 4.3 in \cite{Damjanovic4} to higher rank non-abelian actions by toral
automorphisms.
\begin{lemma}\label{le:110}
There exist constant $C>0$ such that for every non-zero integer vector $v\in\ZZ^N$ and for any $k=(k_1,k_2)\in\ZZ^2\backslash 0$,
  \begin{align*}
\norm{A^{k_1}B^{k_2}v}\geq C\exp\{\tau(\abs{k_1}+\abs{k_2})\}\norm{v}^{-N}.
\end{align*}
\end{lemma}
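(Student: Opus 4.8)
The plan is to follow the strategy of Lemma~4.3 in \cite{Damjanovic4}, modifying it to accommodate the non-commutativity $AB=BA\mathcal{C}$. The first task is to show that the Heisenberg twist is harmless at the level of exponential rates. As reduced above we may assume $\mathcal{C}$ is unipotent, so $\mathcal{C}$ lies in the unipotent radical $U$ of the connected solvable algebraic group $G=\overline{\langle A,B\rangle}$. Passing to a maximal torus of $G$, the semisimple parts $A_s,B_s$ of the multiplicative Jordan decompositions of $A,B$ commute, since their images in $G/U$ commute because $\mathcal{C}\in U$; hence they are simultaneously diagonalizable over $\CC$. This yields an $A,B$-invariant weight decomposition $\CC^N=\bigoplus_i V_i$ on which $A_s,B_s$ act by scalars $a_i,b_i$, while the unipotent factors of $A,B,\mathcal{C}$ preserve each $V_i$ and contribute only polynomial factors. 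Consequently every eigenvalue of $(A^{k_1}B^{k_2})|_{V_i}$ has modulus $\abs{a_i}^{k_1}\abs{b_i}^{k_2}$, and I set $\chi^{(i)}(k)=k_1\log\abs{a_i}+k_2\log\abs{b_i}$, so that $\norm{(A^{k_1}B^{k_2})|_{V_i}}\asymp e^{\chi^{(i)}(k)}\,\mathrm{poly}(\norm{k})$.

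Next I would reduce a general nonzero $v\in\ZZ^N$ to the rational invariant subspace it generates. Let $W$ be the $\QQ$-span of the orbit $\{A^{k_1}B^{k_2}v\}$ and $W_\ZZ=W\cap\ZZ^N\ni v$. Since $A_s,B_s$ are polynomials in $A,B$, the subspace $W$ is $A_s,B_s$-invariant, hence $W=\bigoplus_i(W\cap V_i)$, and one checks that $W\cap V_i\neq0$ precisely for the indices in $S:=\{i:\pi_i(v)\neq0\}$. The heart of the argument is that for every $k\neq0$ the restriction $(A^{k_1}B^{k_2})|_W$ has an eigenvalue of modulus $>1$, i.e.\ $\max_{i\in S}\chi^{(i)}(k)>0$. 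Indeed $(A^{k_1}B^{k_2})|_W$ is an automorphism of the lattice $W_\ZZ$, hence unimodular; were all its eigenvalues of modulus $\le1$ they would all have modulus exactly $1$, and Kronecker's theorem \cite{Kronecker} would force them to be roots of unity, making $A^{k_1}B^{k_2}$ non-ergodic and contradicting the higher rank hypothesis.

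It then remains to upgrade this strict positivity to a uniform linear bound and to insert the integrality of $v$. Since there are finitely many functionals $\chi^{(i)}$ and finitely many possible supports $S$, and each map $k\mapsto\max_{i\in S}\chi^{(i)}(k)$ is continuous, positively homogeneous, and strictly positive on $k\neq0$, a compactness argument on the unit circle furnishes a single $\tau>0$ with $\max_{i\in S}\chi^{(i)}(k)\ge\tau\norm{k}$ for all $k\neq0$ and all relevant $S$. Choosing $i^\ast\in S$ attaining this maximum and projecting onto $V_{i^\ast}$ gives $\norm{A^{k_1}B^{k_2}v}\ge c\,\norm{A^{k_1}B^{k_2}\pi_{i^\ast}(v)}\ge c'\,e^{\tau\norm{k}}\,\mathrm{poly}(\norm{k})^{-1}\norm{\pi_{i^\ast}(v)}$; absorbing the polynomial into the exponential at the cost of shrinking $\tau$, and invoking the standard Diophantine estimate $\norm{\pi_{i^\ast}(v)}\ge c\,\norm{v}^{-N}$ for the nonzero projection of an integer vector onto an algebraically defined subspace, yields the stated bound.

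The main obstacle is the middle step: establishing positivity of $\max_{i\in S}\chi^{(i)}(k)$ in \emph{every} direction $k$, which is exactly where higher rank ergodicity is indispensable, since for a single automorphism the analogous bound fails on rational contracting subspaces. The non-abelian twist forces the extra care of the first step, but because $\mathcal{C}$ and all unipotent parts grow only polynomially they never interfere with the exponential rates; thus the genuinely new content is verifying that the weight decomposition and the unimodularity–Kronecker dichotomy survive the passage from commuting generators to generators commuting up to a central unipotent.
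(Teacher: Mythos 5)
Your handling of the Heisenberg twist (simultaneous weight decomposition for the commuting semisimple parts, with $\mathcal{C}$ and the unipotent factors contributing only polynomial growth) and your use of unimodularity plus Kronecker to get $\max_{i\in S}\chi^{(i)}(k)>0$ at each \emph{integer} $k\neq 0$ are sound, and they follow the same route as the paper, which simply defers to Lemma 4.3 of \cite{Damjanovic4} and asserts that its proof applies word for word to the joint Lyapunov decomposition; the Katznelson-type bound $\norm{\pi_{i^\ast}(v)}\geq c\norm{v}^{-N}$ is likewise invoked there at the same level of detail as in your write-up.

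The gap is in your compactness step. You established strict positivity of $F_S(k)=\max_{i\in S}\chi^{(i)}(k)$ only at integer $k\neq 0$, but a compactness argument on the unit circle needs strict positivity at every \emph{real} direction, and the former implies neither the latter nor the uniform bound $F_S(k)\geq\tau\norm{k}$ on integer vectors. Formal counterexample: take $\chi^{(1)}(k)=k_1-\sqrt{2}\,k_2$ and $\chi^{(2)}=-\chi^{(1)}$ (these even satisfy the sum-zero relation forced by unimodularity); then $F(k)=\abs{k_1-\sqrt{2}\,k_2}$ is positive at every integer $k\neq 0$, yet $F(k)/\norm{k}\to 0$ along integer vectors with $k_1/k_2\to\sqrt{2}$, so no uniform $\tau$ exists. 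Thus pointwise positivity, finiteness of the family of functionals, and homogeneity are not enough; one must rule out exactly this degeneration, and that is where the higher rank hypothesis enters in a stronger way than you use it. Concretely: if $F_S(\omega)\leq 0$ at some real $\omega\neq 0$, then since $\sum_{i\in S} m_i\chi^{(i)}=0$ (unimodularity of the restriction to $W$), all $\chi^{(i)}$ with $i\in S$ vanish at $\omega$; hence the two log-eigenvalue vectors $\bigl(\log\abs{\sigma_i(a)}\bigr)_{i\in S}$ and $\bigl(\log\abs{\sigma_i(b)}\bigr)_{i\in S}$ of $A|_W$ and $B|_W$ are proportional over $\RR$. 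These vectors lie in a discrete subgroup (the log-unit lattice of the relevant number field, by Dirichlet), so the proportionality is in fact rational: $q\log\abs{\sigma_i(a)}=p\log\abs{\sigma_i(b)}$ for all $i$ and some integers $(p,q)\neq(0,0)$. Then every eigenvalue of $A^{q}B^{-p}|_W$ has modulus one, so by Kronecker's theorem \cite{Kronecker} they are all roots of unity and $A^{q}B^{-p}$ is not ergodic, contradicting the higher rank assumption. This number-theoretic step --- applying discreteness and Kronecker to a whole \emph{direction} rather than to a single group element --- is the actual content of Lemma 4.3 of \cite{Damjanovic4} that the paper imports, and it is the piece your proposal is missing.
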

\begin{proof}
From the Lyapunov space decomposition in Theorem A, we see that the proof of Lemma 4.3 in \cite{Damjanovic4} also applies to this case word by word. At first, we can  show  that there exists $\tau>0$ such that for any $k=(k_1,k_2)\in\ZZ^2\backslash 0$, there exists a Lyapunov space in which the Lyapunov exponent of $A^{k_1}B^{k_2}$ is greater than $\tau(\abs{k_1}+\abs{k_2})$. Ergodicity shows that the projection of $v$ to this space is greater than $\gamma\norm{v}^{-N}$, where $\gamma$ is a constant only dependent on the decomposition in Theorem A. Then we get the conclusion.
\end{proof}

\subsection{Twisted coboundary equation over a map on torus} Obstructions to solving a one-cohomology equation for a function over an ergodic toral automorphism
in $C^\infty$ category are sums of Fourier coefficients of the given function
along a dual orbit of the automorphism. This is the content of the Lemma 4.2 in \cite{Damjanovic4}. The same characterization holds however for
one-cohomology equation for a map over
ergodic toral automorphisms as well due to the estimate in Corollary \ref{cor:2}. The proofs of the two lemmas below follow closely the proof of Lemma 4.2 in \cite{Damjanovic4}
for solving a one-cohomology equation for functions.

\begin{lemma}\label{le:8}
Let $P$ and $Q$ are integer matrices in $SL(N,\ZZ)$ and $Q$ is ergodic. For a map $\theta$ on $\TT^N$, if there exists a $C^{\infty}$  map $\omega$ which is $C^0$ small enough on $\TT^N$ such that
\begin{align}\label{for:18}
 P\omega-\omega\circ Q=\theta,
\end{align}
then  the following sums along all nonzero dual orbits are zero, i.e.,
\begin{align*}
\sum_{i=-\infty}^\infty P^{-(i+1)}\hat{\theta}_{Q^iv}=0,\qquad \forall n\neq0.
\end{align*}
\end{lemma}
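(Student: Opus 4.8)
The plan is to pass to Fourier coefficients, reduce \eqref{for:18} to a vector recursion along dual orbits, and telescope. First I would exploit the smallness hypothesis: since $\omega$ is $C^{0}$ small it lifts to a $\ZZ^{N}$-periodic $\RR^{N}$-valued map of zero linear part, hence has a genuine Fourier expansion $\omega=\sum_{n}\widehat{\omega}_{n}e_{n}$ with $\widehat{\omega}_{n}\in\CC^{N}$, and because $\omega\in C^{\infty}$ its coefficients decay faster than any polynomial, $\|\widehat{\omega}_{n}\|\le C_{a}|n|^{-a}$ for every $a>0$. Taking the $n$-th Fourier coefficient of \eqref{for:18}, using $(\widehat{P\omega})_{n}=P\widehat{\omega}_{n}$ together with the dual-map rule $(\widehat{\omega\circ Q})_{n}=\widehat{\omega}_{Qn}$ (with $Q$ denoting the dual map $(Q^{\tau})^{-1}$), yields the recursion $P\widehat{\omega}_{n}-\widehat{\omega}_{Qn}=\widehat{\theta}_{n}$ for all $n$.

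Next I would specialize $n=Q^{i}v$ and multiply through by $P^{-(i+1)}$. Setting $c_{i}:=P^{-i}\widehat{\omega}_{Q^{i}v}$, the recursion becomes $P^{-(i+1)}\widehat{\theta}_{Q^{i}v}=c_{i}-c_{i+1}$, so the symmetric partial sums telescope:
\[
\sum_{i=-M}^{M}P^{-(i+1)}\widehat{\theta}_{Q^{i}v}=c_{-M}-c_{M+1}.
\]
Thus both convergence of the bi-infinite sum and the asserted value $0$ follow at once, provided I can show $c_{\pm M}\to 0$ as $M\to\infty$.

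The vanishing of the boundary terms is the heart of the matter, and the step I expect to be the main obstacle. Here I would first invoke ergodicity of $Q$: it has no eigenvalue a root of unity, so no nonzero dual orbit is finite, and a pigeonhole argument on lattice points forces $|Q^{i}v|\to\infty$ as $|i|\to\infty$ for every $v\neq 0$. Combining this with the super-polynomial decay of $\widehat{\omega}$ gives $\|c_{M+1}\|\le\|P^{-(M+1)}\|\,C_{a}|Q^{M+1}v|^{-a}$, and symmetrically for $c_{-M}$, for every $a$. The delicate issue is to beat the a priori exponential growth of $\|P^{\mp M}\|$ by the orbit growth: when $Q^{\pm M}v$ eventually lies mostly in the expanding subspace of the dual action its norm grows exponentially and choosing $a$ large annihilates $c_{\pm M}$ outright, whereas the case in which the orbit drifts into a neutral direction and grows only polynomially is exactly where the controlled, polynomial growth of $\|P^{\mp M}\|$ (guaranteed in the Heisenberg reduction, where by the Kronecker step the relevant twist has all eigenvalues roots of unity) together with the orbit-growth estimates of Lemma~\ref{le:7} and Lemma~\ref{le:110} must be used to force $c_{\pm M}\to 0$. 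Once this is settled, letting $M\to\infty$ in the telescoped identity gives $\sum_{i=-\infty}^{\infty}P^{-(i+1)}\widehat{\theta}_{Q^{i}v}=0$ for every $v\neq 0$, which is the claim.
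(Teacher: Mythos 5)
Your reduction to the dual recursion $P\widehat{\omega}_n-\widehat{\omega}_{Qn}=\widehat{\theta}_n$ and the telescoping identity is exactly the paper's argument, and you correctly isolate the real issue: the vanishing of the boundary terms $c_{-M}=P^{M}\widehat{\omega}_{Q^{-M}v}$ and $c_{M+1}$. But your treatment of that step has a genuine gap, in two places. First, the pigeonhole observation that $\abs{Q^{i}v}\to\infty$ is far too weak: $\norm{P^{\mp M}}$ may grow exponentially, and super-polynomial decay of $\widehat{\omega}$ along an orbit whose norm grows only polynomially cannot beat a fixed exponential rate (for each fixed decay order $a$ the product $\rho^{M}M^{-a d}$ still diverges). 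Second, your fallback for the supposed ``neutral drift'' case --- that $\norm{P^{\mp M}}$ grows polynomially because the Kronecker reduction makes the twist unipotent --- is not available: the lemma assumes only $P\in SL(N,\ZZ)$, and in the paper's actual use of this lemma (in the proof of Lemma~\ref{le:4}) it is applied with $P=A$ and $Q=A\mathcal{C}$, where $A$ is ergodic, so $\norm{P^{M}}$ genuinely grows exponentially; the Kronecker step makes the center $\mathcal{C}$ unipotent, not the twist $P$.

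What closes the argument (and is what the paper invokes via part (1) of Corollary~\ref{cor:2}) is that for an ergodic $Q\in SL(N,\ZZ)$ the dual orbits satisfy the uniform exponential lower bound \eqref{for:9} with $m=1$: $\norm{Q^{i}v}\geq Ce^{\tau\abs{i}}\norm{v}^{-N}$ for \emph{all} $i\in\ZZ$ and all $v\neq 0$, in both time directions. The neutral-drift case you try to handle separately cannot occur: if a nonzero integer vector had zero projection onto the expanding (resp.\ contracting) subspace of $Q$, the rational $Q$-invariant subspace spanned by its orbit would carry a unimodular integer matrix with all eigenvalues of modulus one, hence roots of unity by Kronecker, contradicting ergodicity of $Q$; the quantitative $\norm{v}^{-N}$ factor is Katznelson's lemma (this is the single-matrix case of Damjanovic--Katok's orbit-growth estimate, i.e.\ the hypothesis of Lemma~\ref{le:7}). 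Granted this bound, no case analysis is needed: since $\omega\in C^{\infty}$, for every $a$ one has $\norm{c_{-M}}\leq C\rho_P^{M}\,C_{a}\bigl(Ce^{\tau M}\norm{v}^{-N}\bigr)^{-a}\to 0$ once $a\tau>\log\rho_P$, and likewise for $c_{M+1}$; equivalently, the series $\sum_{M\geq 0}P^{M}\widehat{\omega}_{Q^{-M}v}$ and $\sum_{\ell\geq 0}P^{-\ell}\widehat{\omega}_{Q^{\ell}v}$ converge absolutely, so their terms vanish. With that substitution your telescoping argument becomes the paper's proof.
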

\begin{proof}
Since $\omega$ is $C^0$ small enough the equation \eqref{for:18} in the dual space has the form
\begin{align}\label{for:20}
 P\widehat{\omega}_n-\widehat{\omega}_{Qn}=\widehat{\theta}_n,\qquad \forall n\in\ZZ^N.
\end{align}
For any $m,\,\ell>0$ iterating the above equation  with respect to $A$ we get
\begin{align*}
  \sum_{i=-m}^{\ell}P^{-i}\widehat{\omega}_{Q^in}-\sum_{i=-m}^{\ell}P^{-(i+1)}\widehat{\omega}_{Q^{i+1}}=\sum_{i=-m}^{\ell}P^{-(i+1)}\widehat{\theta}_{Q^in},
\end{align*}
which simplifies to
\begin{align*}
  P^{m}\widehat{\omega}_{Q^{-m}n}-P^{-(\ell+1)}\widehat{\omega}_{Q^{\ell+1}n}=\sum_{i=-m}^{\ell}P^{-(i+1)}\widehat{\theta}_{Q^in}.
\end{align*}
Then the conclusion follows immediately if we can show that
\begin{align*}
 \lim_{m\rightarrow \infty}P^{m}\widehat{\theta}_{Q^{-m}n}=0,\qquad \forall n\neq 0,
\end{align*}
which is a direct consequence of  \eqref{for:26} of Corollary \ref{cor:2}.
\end{proof}

\begin{lemma}\label{le:5}
 Let $P$ and $Q$ be ergodic integer matrices in $SL(N,\ZZ)$.  Let $\theta$ be a $C^\infty$ map on the torus which is $C^\sigma$ small enough, where $\sigma>N+2+\kappa_{P^{-1},Q}$ ($\kappa_{P^{-1},Q}$ is defined in \eqref{for:34} of Corollary \ref{cor:2}).  If for all nonzero $n\in\ZZ$, the following sums along the dual orbits are zero, i.e.,
\begin{align}\label{for:1120}
\sum_{i=-\infty}^{\infty}P^{-(i+1)}\widehat{\theta}_{Q^in}=0,\qquad \forall n\neq 0.
\end{align}
Then the equation
\begin{align}\label{for:5}
  P\omega-\omega\circ Q=\theta
\end{align}
has a $C^\infty$ solution $\omega$, and the following estimate:
\begin{align}\label{for:8}
  \norm{\omega}_{C^r}\leq C_r\norm{\theta}_{C^{r+\sigma}}
\end{align}
for any $r\geq 0$, where $C_r$ is a number only dependent on Lyapunov exponents of $P$.
\end{lemma}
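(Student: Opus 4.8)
The plan is to solve \eqref{for:5} on the dual side, following the scheme of the proof of Lemma 4.2 in \cite{Damjanovic4}, and then to extract convergence, smoothness and the tame bound \eqref{for:8} from Corollary \ref{cor:2}. Expanding $\theta$ and the unknown $\omega$ in characters and using the dual-map convention, the equation \eqref{for:5} is equivalent to the family of vector recurrences
\begin{align*}
 P\widehat{\omega}_n-\widehat{\omega}_{Qn}=\widehat{\theta}_n,\qquad n\in\ZZ^N.
\end{align*}
For the constant mode $n=0$ this reads $(P-I)\widehat{\omega}_0=\widehat{\theta}_0$; since $P$ is ergodic it has no eigenvalue equal to $1$, so $P-I$ is invertible and $\widehat{\omega}_0:=(P-I)^{-1}\widehat{\theta}_0$ is forced. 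Thus the real work is to produce $\widehat{\omega}_n$ for $n\neq0$.

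For $n\neq0$ the recurrence couples only the modes on a single dual $Q$-orbit, so I would solve it orbit by orbit. By the growth estimate \eqref{for:9} for the single ergodic matrix $Q$ (the one-parameter instance of Lemma \ref{le:110}), $\norm{Q^in}$ grows exponentially as $\abs{i}\to\infty$ for every $n\neq0$, so both one-sided sums
\begin{align*}
 \widehat{\omega}^{+}_n:=\sum_{i=0}^{\infty}P^{-(i+1)}\widehat{\theta}_{Q^in},\qquad
 \widehat{\omega}^{-}_n:=-\sum_{i=-\infty}^{-1}P^{-(i+1)}\widehat{\theta}_{Q^in}
\end{align*}
converge absolutely by part \eqref{for:26} of Corollary \ref{cor:2}. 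Either choice, used consistently, satisfies the recurrence by a one-line index shift; the subtlety is that a \emph{uniform tame} bound is available only when one sums \emph{away} from the minimal point $n_{\min}$ of the orbit, because only then does the polynomial condition \eqref{for:1111} hold along the chosen half-orbit. Hence I would set $\widehat{\omega}_n:=\widehat{\omega}^{+}_n$ when $n$ lies on the expanding side of $n_{\min}$ and $\widehat{\omega}_n:=\widehat{\omega}^{-}_n$ when it lies on the contracting side. The two prescriptions must be matched at $n_{\min}$, and the required identity $\widehat{\omega}^{+}_{n_{\min}}=\widehat{\omega}^{-}_{n_{\min}}$ is precisely the vanishing hypothesis \eqref{for:1120}; this is the one and only place where \eqref{for:1120} enters, and it guarantees that the assembled $\widehat{\omega}$ solves the recurrence across the change of summation direction.

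To finish I would read off smoothness and the estimate from part \eqref{for:27} of Corollary \ref{cor:2}: writing $\widehat{\omega}_n=S_{K(n)}(\theta,n,P^{-1};Q)$ with $K(n)$ the upward half-orbit and weight matrix $P^{-1}$ (respectively $P$ on the contracting side), the uniform validity of \eqref{for:1111} yields $\omega\in C^\infty$ together with $\norm{\omega}_{C^r}\leq C_r\norm{\theta}_{C^{r+\sigma}}$ for every $\sigma>N+2+\kappa_{P^{-1},Q}$, the constant $C_r$ depending only on the Lyapunov data of $P$ through $\kappa_{P^{-1},Q}$.

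The main obstacle is not the algebra of the construction, which is forced and whose consistency is handed to us by \eqref{for:1120}, but the uniform tame control of the orbit sums. One must partition $\ZZ^N\setminus\{0\}$ according to the position of $n$ relative to the expanding, neutral and contracting subspaces of $Q$, choose each half-orbit $K(n)$ so that the super-polynomial Fourier decay of $\theta$ dominates the growth of the weights $P^{\mp i}$ uniformly in $n$, and verify \eqref{for:1111} even in the neutral directions, where $\norm{Q^in}$ grows only polynomially. Packaging these estimates is exactly the role of Corollary \ref{cor:2}, so once the half-orbits are fixed the bound \eqref{for:8} follows and the proof is complete.
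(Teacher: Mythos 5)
Your proposal is correct and follows essentially the same route as the paper: pass to the dual recurrence $P\widehat{\omega}_n-\widehat{\omega}_{Qn}=\widehat{\theta}_n$, solve the zero mode by $(P-I)^{-1}$, form the two one-sided orbit sums $\widehat{\omega}^{\pm}_n$ (absolutely convergent by Corollary \ref{cor:2}), use the vanishing hypothesis \eqref{for:1120} to identify them, and then invoke the polynomial lower bound \eqref{for:1111} on $K^+$ for $n$ mostly expanding/neutral and on $K^-$ for $n$ mostly contracting to get smoothness and the tame bound \eqref{for:8} from Corollary \ref{cor:2}. Your piecewise definition with matching at $n_{\min}$ is just a rephrasing of the paper's observation that \eqref{for:1120} forces $\widehat{\omega}^{+}_n=\widehat{\omega}^{-}_n$ at every $n\neq 0$ (the two conditions are equivalent, since the full orbit sum at $Qn$ equals $P$ times the sum at $n$), so there is no substantive difference.
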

\begin{proof}
Suppose $\omega$ is a $C^\infty$ solution $C^0$ small enough to \eqref{for:5}. Then the equation \eqref{for:5} in the dual space has
the form
\begin{align}\label{for:6}
 P\widehat{\omega}_n-\widehat{\omega}_{Qn}=\widehat{\theta}_n,\qquad \forall n\in\ZZ^N.
\end{align}
For $n=0$, since $P$ is ergodic, we can immediately calculate $\widehat{\omega}_0=(P-I)^{-1}\widehat{\theta}_0$. For $n\neq 0$ the
dual equation has two solutions
\begin{align*}
  \widehat{\omega}_n^{\pm}=\pm\sum_{i\geq0 \atop i\leq-1}P^{-(i+1)}\widehat{\theta}_{Q^in},\qquad n\neq 0.
\end{align*}
Each sum converges absolutely by \eqref{for:26} of Corollary \ref{cor:2}. By assumption \eqref{for:1120}
$\widehat{\omega}_n^{+}=\widehat{\omega}_n^{-}\stackrel{\rm def}{=}\widehat{\omega}_n$. This gives a formal solution
$\omega=\sum\widehat{\omega}_n^{+}e_n=\sum\widehat{\omega}_n^{-}e_n$. We estimate
each $\widehat{\omega}_n$ using both of its forms in order to show that $\omega$ is $C^\infty$. In the notation of Corollary \ref{cor:2} we can write
\begin{align*}
 \widehat{\omega}^+_n=S_{K^{+}}(P^{-1}\theta,n,P^{-1},Q),\quad \text{ and }\quad \widehat{\omega}^-_n=-S_{K^{-}}(P^{-1}\theta,n,P^{-1},Q).
\end{align*}
Here $K^+=\{i\in\ZZ:i\geq 0\}$ and $K^-=\{i\in\ZZ:i\leq -1\}$.

If $n$ is mostly contracting, i.e., if $n\hookrightarrow 3(A)$, then
\begin{align}\label{for:14}
  \norm{A^in}\geq C\rho^{-i}\norm{n},\qquad \forall i\leq -1;
\end{align}
If $n$ is mostly contracting, i.e., if $n\hookrightarrow 1,2(A)$, then
\begin{align}\label{for:19}
  \norm{A^in}\geq Ci^{-N}\norm{n},\qquad \forall i\geq 0.
\end{align}
Thus the polynomial estimate needed for the application of part \eqref{for:34} of Corollary \ref{cor:2} is
satisfied either in $K^+$ and $K^-$ for any $n\in\ZZ^N$. This estimate implies that \eqref{for:8} holds. Finally, this also implies that smallness of $C^\sigma$ norm of $\theta$ guarantees $C^0$ smallness of $\omega$.

\end{proof}

\subsection{Construction of the projection}
\begin{lemma}\label{le:4}
Fix $\sigma=N+3+\kappa_{A^{-1},A}$. There exists $\delta>0$ such that for any $C^\infty$ maps $\theta$, $\psi$, $\omega$ on $\TT^N$ that are $C^\sigma$ small enough,
it is possible to split $\theta$, $\psi$ and $\omega$ as
\begin{gather*}
\theta=\Delta_A\Omega+\mathcal{R}\theta,\qquad \psi=\Delta_B\Omega+\mathcal{R}\psi\\
 \omega=\Delta_\mathcal{C}\Omega+\mathcal{R}\omega
 \end{gather*}
for a $C^\infty$ map $\Omega$, so that
\begin{align*}
 \norm{\mathcal{R}\theta,\mathcal{R}\psi,\mathcal{R}\omega}_{C^r}&\leq C_{r}\norm{R_1,R_2,R_3}_{C^{r+\delta}}\qquad\text{ and }\\
 \norm{\Omega}_{C^r}&\leq C_{r}\norm{\theta,\omega,\psi}_{C^{r+\sigma}}
 \end{align*}
for any $r\geq 0$, where
\begin{align}
R_1&\stackrel{\rm def}{=}\Delta_\mathcal{C}\theta-\Delta_A\omega,\label{for:16}\\
R_2&\stackrel{\rm def}{=}\Delta_\mathcal{C}\psi-\Delta_B\omega\label{for:35}
\end{align}
and
\begin{align}\label{for:3}
 R_3\stackrel{\rm def}{=}\theta\circ B+A\psi-\psi\circ A\mathcal{C}-B\omega\circ A-B\mathcal{C}\theta.
\end{align}
\end{lemma}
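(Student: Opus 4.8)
The plan is to read the triple $(\theta,\psi,\omega)$ as an approximate twisted cocycle over the $\H$-action, with $\Delta_A\Omega:=A\Omega-\Omega\circ A$ (and likewise for $B$, $\mathcal{C}$) the twisted coboundary operators. The three quantities $R_1,R_2,R_3$ are precisely the defects in the twisted cocycle relations attached to the pairs $(A,\mathcal{C})$, $(B,\mathcal{C})$ and $(A,B)$; if they vanished, the system would be an exact twisted cocycle and would admit a common transfer map $\Omega$ by higher-rank cohomological triviality. The lemma is the effective version of this, and I would prove it by producing a single $\Omega$, obtained by solving the equation over the ergodic generator $A$, and then showing that the residual failure in all three equations is bounded by $R_1,R_2,R_3$ with a fixed loss of derivatives.

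First I would pass to the dual space exactly as in Lemma \ref{le:8} and Lemma \ref{le:5}: the $C^\sigma$-smallness hypotheses guarantee that the operators linearize on Fourier coefficients, so that $\Delta_A\Omega=\theta$ becomes $A\widehat{\Omega}_n-\widehat{\Omega}_{An}=\widehat{\theta}_n$. The only obstruction to solving this over the ergodic $A$ is the vanishing of the dual-orbit sums, so I define $\mathcal{R}\theta$ to be the projection of $\theta$ onto these obstructions, built from $\sum_{i\in\ZZ}A^{-(i+1)}\widehat{\theta}_{A^in}$ distributed along each orbit. Then $\theta-\mathcal{R}\theta$ is an exact $A$-coboundary, and Lemma \ref{le:5}, through the orbit-growth estimates of Corollary \ref{cor:2}, produces the solution $\Omega$ together with the tame bound $\|\Omega\|_{C^r}\le C_r\|\theta\|_{C^{r+\sigma}}$, which is the second asserted inequality; the choice $\sigma=N+3+\kappa_{A^{-1},A}$ is dictated by part (c) of Corollary \ref{cor:2}.

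With $\Omega$ fixed I would set $\mathcal{R}\psi:=\psi-\Delta_B\Omega$ and $\mathcal{R}\omega:=\omega-\Delta_\mathcal{C}\Omega$ and bound all three remainders by the $R_i$. The decisive step is the estimate for the $A$-obstruction $\mathcal{R}\theta$. Summing the twisted relation $R_3$ along a dual $A$-orbit makes the $\Delta_A$-type contributions telescope, while $A\mathcal{C}=\mathcal{C}A$ and $AB=BA\mathcal{C}$ turn the remaining factors into shifts by $B$, twisted by the unipotent $\mathcal{C}$, on the space of $A$-orbits; the $\omega$-contribution coming from the term $B\omega\circ A$ is reorganized using $R_1=\Delta_\mathcal{C}\theta-\Delta_A\omega$. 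The outcome is a secondary cohomological equation, over the $B$-action on $A$-orbits, whose right-hand side is the $A$-orbit sum of $R_3$ (and $R_1$); since the joint dual $(A,B)$-action enjoys the exponential growth of Lemma \ref{le:110}, Corollary \ref{cor:2} solves this secondary equation and yields $\|\mathcal{R}\theta\|_{C^r}\le C_r\|R_1,R_3\|_{C^{r+\delta}}$ for a fixed $\delta$. For $\mathcal{R}\psi$ I would compute $\Delta_A(\mathcal{R}\psi)=\Delta_A\psi-\Delta_B(\theta-\mathcal{R}\theta)$ and rewrite it, using $R_3$ and the already-controlled $\mathcal{R}\theta$, as a quantity of size $\|R_3\|+\|\mathcal{R}\theta\|$; solving over the ergodic $A$ once more bounds $\mathcal{R}\psi$ itself, its own $A$-obstruction being expressible through $R_2$ and $R_3$. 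The term $\mathcal{R}\omega$ is treated identically through $R_1$ and $A\mathcal{C}=\mathcal{C}A$, reducing it to the $A$-equation with right-hand side controlled by $R_1$ and $\mathcal{R}\theta$.

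The main obstacle is exactly this control of the cohomological obstruction $\mathcal{R}\theta$ (and its analogues) by the cocycle defects: the equation over $A$ alone has a genuine obstruction that need not be small, and it is only the higher-rank interplay of $A$ and $B$, quantified by the exponential orbit growth of Lemma \ref{le:110}, that forces the obstruction back down to the size of $R_1,R_2,R_3$. The extra subtlety compared with the $\ZZ^2$ case is the twist by the center: because $AB=BA\mathcal{C}$ rather than $AB=BA$, the telescoping acquires $\mathcal{C}$-factors, but since all eigenvalues of $\mathcal{C}$ are $1$ the norms $\|\mathcal{C}^k\|$ grow only polynomially, so these factors are absorbed into the constants without spoiling the exponential estimates. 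Throughout one must track the fixed loss $\delta$ together with the loss $\sigma$ coming from Corollary \ref{cor:2}, and retain the $C^\sigma$-smallness assumption in order to justify the passage to the Fourier picture.
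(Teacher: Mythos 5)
Your proposal follows essentially the same route as the paper's proof: concentrate the dual $A$-orbit obstruction sums to define $\mathcal{R}\theta$, solve $\Delta_A\Omega=\theta-\mathcal{R}\theta$ tamely via Lemma \ref{le:5}, set $\mathcal{R}\psi=\psi-\Delta_B\Omega$ and $\mathcal{R}\omega=\omega-\Delta_\mathcal{C}\Omega$, and then bound the $A$-obstruction of $\theta$ by telescoping the relation coming from $R_3$ (reorganized through $R_1$, with the $\mathcal{C}$-twists absorbed by polynomial growth) and iterating along $B$, where the higher-rank exponential growth of Lemma \ref{le:110} fed into Corollary \ref{cor:2} forces the obstruction down to the size of $R_1,R_2,R_3$ --- exactly the paper's argument, including the choice of half-space of $B$-iterates dictated by the signs of Lyapunov exponents, which your appeal to Corollary \ref{cor:2} implicitly requires. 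The one formula you would need to repair is $\Delta_A(\mathcal{R}\psi)=\Delta_A\psi-\Delta_B(\theta-\mathcal{R}\theta)$, which is the abelian commutation and fails for $\H$; the paper instead substitutes the splittings into \eqref{for:3} to obtain the twisted equation $A\mathcal{R}\psi-\mathcal{R}\psi\circ A\mathcal{C}=R_3+\mathcal{R}\theta\circ B+B\mathcal{C}\mathcal{R}\theta+B\mathcal{R}\omega\circ A$, whose right-hand side contains $\mathcal{R}\omega$, so $\mathcal{R}\omega$ must be estimated (from $\Delta_A\mathcal{R}\omega=\Delta_\mathcal{C}\mathcal{R}\theta-R_1$) before $\mathcal{R}\psi$ rather than after --- a minor reordering, not a gap.
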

\begin{proof}
(1) \emph{Construction of $\Omega$ and $\mathcal{R}\theta$}. Let $\mathcal{R}\theta=\sum_n\widehat{\mathcal{R}\theta}_ne_n$ where
\begin{align*}
\widehat{\mathcal{R}\theta}_n&\stackrel{\rm def}{=}\left\{\begin{aligned} &\sum_{i\in\ZZ}A^{-i}\widehat{\mathcal{R}\theta}_{A^in},\qquad &n=n_{\text{min}},\\
&0, \qquad &\text{otherwise}
\end{aligned}
 \right.
\end{align*}
for $n\neq 0$ and $\widehat{\mathcal{R}\theta}_0\stackrel{\rm def}{=}0$. Let

Note that $n_{\text{min}}$ is substantially large both in the expanding and in the contracting
direction for $A$, then both \eqref{for:14} and \eqref{for:19} hold if $n=n_{\text{min}}$. The following estimate is obtained from \eqref{for:27} of Corollary \ref{cor:2}:
\begin{align}\label{for:31}
 \norm{\mathcal{R}\theta}_{C^r}\leq C_r\norm{\theta}_{C^{r+\sigma}},\qquad \forall r\geq 0.
\end{align}
Since $\theta-\mathcal{R}\theta$ satisfies the solvable condition in Lemma \ref{le:5}. By using Lemma \ref{le:5} there is a $C^\infty$ function $\Omega$ such that
\begin{align}\label{for:30}
 \Delta_A\Omega=\theta-\mathcal{R}\theta
\end{align}
with estimates
\begin{align*}
 \norm{\Omega}_{C^r}\leq C_r\norm{\theta-\mathcal{R}\theta}_{C^{r+\sigma}}\leq C_r\norm{\theta}_{C^{r+2\sigma}},\qquad \forall r\geq 0.
\end{align*}
(2) \emph{Estimates for $\mathcal{R}\theta$}. Rewrite \eqref{for:3} we get
\begin{align*}
 A\psi-\psi\circ A\mathcal{C}=B\omega\circ A+B\mathcal{C}\theta-\theta\circ B+R_3.
\end{align*}
Lemma \ref{le:8} shows that  the obstructions for $B\omega\circ A+B\mathcal{C}\theta-\theta\circ B+R_3$ with respect to $A\mathcal{C}$ vanish; therefore for any $n\neq 0$ we get
\begin{align*}
&\sum_iA^{-(i+1)}\widehat{\theta}_{B(A\mathcal{C})^in}\\
&=\sum_iA^{-(i+1)}B\mathcal{C}\widehat{\theta}_{(A\mathcal{C})^in}+\sum_iA^{-(i+1)}B\widehat{\omega}_{A(A\mathcal{C})^in}\\
&+\sum_iA^{-(i+1)}(\widehat{R_3})_{(A\mathcal{C})^in}
\end{align*}
since all the sums involved converge absolutely by \eqref{for:26} of Corollary \ref{cor:2}. Furthermore, by using the relation
\begin{align}\label{for:15}
  B(A\mathcal{C})^i=A^iB, \qquad \forall\,i\in\ZZ,
\end{align}
we obtain from the above relation
\begin{align}\label{for:37}
&\sum_iA^{-(i+1)}\widehat{\theta}_{A^iBn}-\sum_iBA^{-(i+1)}\widehat{\theta}_{A^in}\notag\\
&=\big(\sum_iA^{-(i+1)}B\mathcal{C}\widehat{\theta}_{(A\mathcal{C})^in}-\sum_iBA^{-(i+1)}\widehat{\theta}_{A^in}\big)\notag\\
&+\sum_iA^{-(i+1)}B\widehat{\omega}_{A(A\mathcal{C})^in}+\sum_iA^{-(i+1)}(\widehat{R_3})_{(A\mathcal{C})^in}.
\end{align}
Next, we will compute  the sum $\sum_iA^{-(i+1)}B\mathcal{C}\widehat{\theta}_{(A\mathcal{C})^in}-\sum_iBA^{-(i+1)}\widehat{\theta}_{A^in}$. To do so, we split it into two sums
$\sum_i=\sum_{i\geq 0}+\sum_{i\leq -1}$ and then use relation \eqref{for:16} to simplify each one. Set
\begin{align*}
  \Lambda=\Delta_A\omega.
\end{align*}
Then for any $n\neq 0$, we obtain from the proof of Lemma \ref{le:5}:
\begin{align}
 \sum_{i\geq 1}A^{-(i+1)}\widehat{\Lambda}_{A^in}&=\omega_n-A^{-1}\widehat{\Lambda}_{n}=A^{-1}\widehat{\omega}_{An}\qquad\text{ and}\label{for:13}\\
  -\sum_{i\leq-1}A^{-(i+1)}\widehat{\Lambda}_{A^in}&=\omega_n\label{for:17}.
\end{align}
Using relation \eqref{for:16} we get
\begin{align}\label{for:24}
&\widehat{\theta}_{A^in}-\mathcal{C}^{-i}\widehat{\theta}_{\mathcal{C}^iA^in}\notag\\
&=\left\{\begin{aligned} &\sum_{0\leq j\leq i-1}\mathcal{C}^{-(j+1)}(\widehat{\Lambda}_{\mathcal{C}^jA^in}-(\widehat{R_1})_{\mathcal{C}^jA^in}),\quad& i&\geq 1,\\
&-\sum_{i\leq j\leq -1}\mathcal{C}^{-(j+1)}(\widehat{\Lambda}_{\mathcal{C}^jA^in}-(\widehat{R_1})_{\mathcal{C}^jA^in}), \quad& i&\leq-1.
\end{aligned}
 \right.
\end{align}
Using \eqref{for:24} for the case of $i\geq 1$ we obtain
\begin{align*}
 &\sum_{i\geq 0}A^{-(i+1)}B\mathcal{C}\widehat{\theta}_{(A\mathcal{C})^in}-\sum_{i\geq 0}BA^{-(i+1)}\widehat{\theta}_{A^in}\\
 &=\sum_{i\geq 1}A^{-(i+1)}B\mathcal{C}\widehat{\theta}_{(A\mathcal{C})^in}-\sum_{i\geq 1}BA^{-(i+1)}\widehat{\theta}_{A^in}\\
 &\stackrel{(1)}{=}\sum_{i\geq 1}BA^{-(i+1)}\big(\mathcal{C}^{-i}\widehat{\theta}_{(A\mathcal{C})^in}-\widehat{\theta}_{A^in}\big)\\
 &=-\sum_{i\geq1}\sum_{j=0}^{i-1}BA^{-(i+1)}\mathcal{C}^{-(j+1)}(\widehat{\Lambda}_{\mathcal{C}^jA^in}-(\widehat{R_1})_{\mathcal{C}^jA^in})\\
 &=-\sum_{j\geq 0}\sum_{i\geq j+1}BA^{-(i+1)}\mathcal{C}^{-(j+1)}(\widehat{\Lambda}_{\mathcal{C}^jA^in}-(\widehat{R_1})_{\mathcal{C}^jA^in}).
\end{align*}
Here $(1)$ is from relation \eqref{for:15}. Of course, to justify the change of order of summation in the last equality, we must prove the absolute convergence of the sum. Using the notation in Corollary \ref{cor:2} we can write
\begin{align*}
 &\sum_{i\geq 0}A^{-(i+1)}B\mathcal{C}\widehat{\theta}_{(A\mathcal{C})^in}-\sum_{i\geq 0}BA^{-(i+1)}\widehat{\theta}_{A^in}\\
 &=BA\mathcal{C}S_K((\Lambda-R_1),n,A,\mathcal{C};\mathcal{C},A)
\end{align*}
where $K=\{(j,i)\in\ZZ^2:i-1\geq j\geq 0\}$. For any $i,\,j$ with $\abs{j}\leq\abs{i}$, \eqref{for:9} in Lemma \ref{le:7} shows that
\begin{align}\label{for:39}
 \abs{\mathcal{C}^jA^in}&\geq C\abs{j}^{-N}\abs{A^in}\geq C_1\abs{i}^{-N}\exp(\tau_A \abs{i})\abs{n}^{-N}\notag\\
 &\geq C_2\exp\{\tau_A (\abs{i}+\abs{j})/4\}\abs{n}^{-N},
\end{align}
where $C$, $C_1$ and $C_2$ are fixed numbers only dependent on $A$ and $\mathcal{C}$; and $\tau$ is defined in \eqref{for:9} of Lemma \ref{le:7}. This justifies to apply \eqref{for:26} of Corollary \ref{cor:2} to show the absolute convergence of the sum. Furthermore, we have
\begin{align*}
 &\sum_{j\geq 0}\sum_{i\geq j+1}B\mathcal{C}^{-(j+1)}A^{-(i+1)}\widehat{\Lambda}_{\mathcal{C}^jA^in}\\
 &=\sum_{j\geq 0}B\mathcal{C}^{-(j+1)}A^{-j}(\sum_{k\geq 1}A^{-(k+1)}\widehat{\Lambda}_{(A\mathcal{C})^jA^kn})\\
 &\stackrel{(1)}{=}\sum_{j\geq 0}B\mathcal{C}^{-(j+1)}A^{-(j+1)}\widehat{\omega}_{A(A\mathcal{C})^jn}\\
 &\stackrel{(2)}{=}\sum_{j\geq 0}A^{-(j+1)}B\widehat{\omega}_{A(A\mathcal{C})^jn}.
\end{align*}
Here $(1)$ follows from \eqref{for:13} and $(2)$ uses relation \eqref{for:15} again.

Hence we obtain
\begin{align}\label{for:38}
  &\sum_{i\geq 0}A^{-(i+1)}B\mathcal{C}\widehat{\theta}_{(A\mathcal{C})^in}-\sum_{i\geq 0}BA^{-(i+1)}\widehat{\theta}_{A^in}\notag\\
  &=-\sum_{j\geq 0}A^{-(j+1)}B\widehat{\omega}_{A(A\mathcal{C})^jn}\notag\\
  &+\sum_{j\geq 0}\sum_{i\geq j+1}BA^{-(i+1)}\mathcal{C}^{-(j+1)}(\widehat{R_1})_{\mathcal{C}^jA^in}.
\end{align}
To compute the sum $\sum_{i\leq-1}$ we use \eqref{for:24} for the case of $i\leq -1$:
\begin{align*}
 &\sum_{i\leq-1}A^{-(i+1)}B\mathcal{C}\widehat{\theta}_{(A\mathcal{C})^in}-\sum_{i\leq-1}BA^{-(i+1)}\widehat{\theta}_{A^in}\\
 &=\sum_{i\leq-1}BA^{-(i+1)}\big(\mathcal{C}^{-i}\widehat{\theta}_{(A\mathcal{C})^in}-\widehat{\theta}_{A^in}\big)\\
 &=\sum_{i\leq-1}\sum_{j=i}^{-1}BA^{-(i+1)}\mathcal{C}^{-(j+1)}(\widehat{\Lambda}_{\mathcal{C}^jA^in}-(\widehat{R_1})_{\mathcal{C}^jA^in})\\
 &=\sum_{j\leq-1}\sum_{i\leq j}BA^{-(i+1)}\mathcal{C}^{-(j+1)}(\widehat{\Lambda}_{\mathcal{C}^jA^in}-(\widehat{R_1})_{\mathcal{C}^jA^in}).
\end{align*}
Again we need to show the absolute convergence. We can also write
\begin{align*}
 &\sum_{i\leq-1}A^{-(i+1)}B\mathcal{C}\widehat{\theta}_{(A\mathcal{C})^in}-\sum_{i\leq-1}BA^{-(i+1)}\widehat{\theta}_{A^in}\\
 &=BA\mathcal{C}S_{K'}((\Lambda-R_1),n,A,\mathcal{C};\mathcal{C},A),
 \end{align*}
where $K'=\{(j,i)\in\ZZ^2:i\leq j\leq -1\}$. Then \eqref{for:39} shows that the absolute convergence follows from the same reason as in previous part.
Furthermore, by using \eqref{for:17} and relation \eqref{for:15} again we obtain
\begin{align*}
 &\sum_{j\leq-1}\sum_{i\leq j}B\mathcal{C}^{-(j+1)}A^{-(i+1)}\widehat{\Lambda}_{\mathcal{C}^jA^in}\\
 &=\sum_{j\leq-1}B\mathcal{C}^{-(j+1)}A^{-(j+1)}\big(\sum_{k\leq-1}A^{-(k+1)}\widehat{\Lambda}_{A(A\mathcal{C})^jA^kn}\big)\\
 &=-\sum_{j\leq-1}B\mathcal{C}^{-(j+1)}A^{-(j+1)}\widehat{\omega}_{A(A\mathcal{C})^jn}\\
 &=-\sum_{j\leq-1}A^{-(j+1)}B\widehat{\omega}_{A(A\mathcal{C})^jn}.
\end{align*}
Hence we obtain
\begin{align}\label{for:36}
  &\sum_{i\leq-1}A^{-(i+1)}B\mathcal{C}\widehat{\theta}_{(A\mathcal{C})^in}-\sum_{\leq-1}BA^{-(i+1)}\widehat{\theta}_{A^in}\notag\\
  &=-\sum_{j\leq-1}A^{-(j+1)}B\widehat{\omega}_{A(A\mathcal{C})^jn}\notag\\
  &-\sum_{j\leq-1}\sum_{i\leq j}BA^{-(i+1)}\mathcal{C}^{-(j+1)}(\widehat{R_1})_{\mathcal{C}^jA^in}.
\end{align}
By using \eqref{for:37}, \eqref{for:38} and \eqref{for:36} for any $n\neq 0$ we obtain:
\begin{align*}
 &\sum_iA^{-(i+1)}\widehat{\theta}_{A^iBn}-\sum_iBA^{-(i+1)}\widehat{\theta}_{A^in}\\
 &=\sum_{j\geq 0}\sum_{i\geq j+1}BA^{-(i+1)}\mathcal{C}^{-(j+1)}(\widehat{R_1})_{\mathcal{C}^jA^in}\\
 &-\sum_{j\leq-1}\sum_{i\leq j}BA^{-(i+1)}\mathcal{C}^{-(j+1)}(\widehat{R_1})_{\mathcal{C}^jA^in}\\
 &+\sum_iA^{-(i+1)}(\widehat{R_3})_{(A\mathcal{C})^in}.
\end{align*}
Iterating this equation with respect to $B$ we obtain
\begin{align*}
 &\sum_iA^{-(i+1)}\widehat{\theta}_{A^in}-\lim_{\ell\rightarrow\infty}\sum_iB^{-\ell}A^{-(i+1)}\widehat{\theta}_{A^iB^\ell n}\\
 &=-\sum_{k\geq 0}\sum_{j\geq 0}\sum_{i\geq j+1}B^{-k}A^{-(i+1)}\mathcal{C}^{-(j+1)}(\widehat{R_1})_{\mathcal{C}^jA^iB^kn}\\
 &+\sum_{k\geq 0}\sum_{j\leq-1}\sum_{i\leq j}B^{-k}A^{-(i+1)}\mathcal{C}^{-(j+1)}(\widehat{R_1})_{\mathcal{C}^jA^iB^kn}\\
 &-\sum_{k\geq 0}\sum_iB^{-(k+1)}A^{-(i+1)}(\widehat{R_3})_{(A\mathcal{C})^iB^kn}.
\end{align*}
The condition \eqref{for:9} in Corollary \ref{cor:2} is satisfied by
Lemma \ref{le:110}. Hence the limit above is $0$ from $(1)$ of Corollary \ref{cor:2}; and the absolute convergence of the sum involving $(A\mathcal{C})^iB^k$ is justified by the same reason. To show the absolute convergence of the other two sums involving $\mathcal{C}^jA^iB^k$ where $\abs{j}\leq\abs{i}$, by the same reason the following inequality is sufficient:
\begin{align*}
 \norm{\mathcal{C}^jA^iB^kn}&\geq C\abs{j}^{-N}\norm{A^iB^kn}\geq C\abs{i}^{-N}\exp\{\tau_{A,B}(\abs{i}+\abs{k})\}\norm{n}^{-N}\\
 &\geq C\exp\{\frac{1}{2}\tau_{A,B}(\abs{i}+\abs{k})\}\norm{n}^{-N}\\
 &\geq C\exp\{\frac{1}{4}\tau_{A,B}(\abs{i}+\abs{j}+\abs{k})\}\norm{n}^{-N}.
\end{align*}

Hence by the notation of Corollary \ref{cor:2} we obtain
\begin{align*}
 &\sum_iA^{-(i+1)}\widehat{\theta}_{A^in}\\
 &=-S_{K_1}((A\mathcal{C})^{-1}R_1,n,B^{-1},A^{-1},\mathcal{C}^{-1};\mathcal{C},A,B)\\
 &+S_{K_2}((A\mathcal{C})^{-1}R_1,n,B^{-1},A^{-1},\mathcal{C}^{-1};\mathcal{C},A,B)\\
 &-B^{-1}S_{K_3}(A^{-1}R_3,n,B^{-1},A^{-1};A\mathcal{C},B),
\end{align*}
where $K_1=\{(k_1,k_2,k_3)\in\ZZ^3:k_1\geq0,\,k_2\geq k_1+1,\,k_3\geq 0\}$, $K_2=\{(k_1,k_2,k_3)\in\ZZ^3:k_1\leq-1,\,k_2\leq k_1,\,k_3\geq 0\}$
and $K_3=\{(k_1,k_2)\in\ZZ^2:k_2\geq 0\}$.

By iterating backwards and applying the
same reasoning, we obtain
\begin{align*}
 &\sum_iA^{-(i+1)}\widehat{\theta}_{A^in}\\
 &=S_{K_1'}((A\mathcal{C})^{-1}R_1,n,B^{-1},A^{-1},\mathcal{C}^{-1};\mathcal{C},A,B)\\
 &-S_{K_2'}((A\mathcal{C})^{-1}R_1,n,B^{-1},A^{-1},\mathcal{C}^{-1};\mathcal{C},A,B)\\
 &+B^{-1}S_{K_3'}(A^{-1}R_3,n,B^{-1},A^{-1};A\mathcal{C},B),
\end{align*}
where $K_1'=\{(k_1,k_2,k_3)\in\ZZ^3:k_1\geq0,\,k_2\geq k_1+1,\,k_3\leq-1\}$, $K_2'=\{(k_1,k_2,k_3)\in\ZZ^3:k_1\leq-1,\,k_2\leq k_1,\,k_3\leq-1\}$
and $K_3'=\{(k_1,k_2)\in\ZZ^2:k_2\leq-1\}$.

Then according to \eqref{for:27} of Corollary \ref{cor:2}, the needed estimate for $\mathcal{R}\theta$
with respect to $R$ follows if in at least one of the union of half-spaces $K^+=\bigcup_{i=1}^3K_i$ and $K^-=\bigcup_{i=1}^3K_i'$ the
dual action satisfies some polynomial lower bound for every $n=n_{\text{min}}$.

Write $A$ and $B$ in block diagonal forms as stated in proof of Corollary \ref{cor:2}. In case $An\hookrightarrow 2(A)$, let $n_1$ be the largest projection of $n$ to some neutral block $J'$ for $A$. Then
\begin{align*}
  \norm{n_1}\geq C\norm{n}.
\end{align*}
Let the Lyapunov exponent of $B$ on this block be $\nu$. For all $j,\,i,\,k$ the Lyapunov exponent of $\mathcal{C}^jA^iB^k$ on $J'$ is $k\nu$.
Then if $\nu\geq 0$, on the half-space $K^+$ we obtain
\begin{align*}
 \norm{\mathcal{C}^jA^iB^kn}\geq C\abs{j}^{-N}\abs{i}^{-N}\abs{k}^{-N}\norm{n_1}\geq C_1\abs{ijk}^{-N}\norm{n},\quad \forall k\geq 0
\end{align*}
and
\begin{align*}
 \norm{\mathcal{C}^jA^iB^kn}\geq C\abs{j}^{-N}\abs{i}^{-N}\exp(k\nu/2)\norm{n_1}\geq C_1\abs{ij}^{-N}\norm{n}
\end{align*}
on $K^-$ for $k<0$ if $\nu< 0$.

Thus the polynomial estimate needed for the application of part \eqref{for:27} of Corollary \ref{cor:2} is
satisfied for such $n$.

In case $An\hookrightarrow 1(A)$, let $n_1$ and $n_2$ be the largest projections of $n$ to some blocks $J_1$ and $J_2$ with positive Lyapunov exponent $\lambda_1$
and negative Lyapunov exponent $\lambda_2$ respectively. Let $\nu_1$ and $\nu_2$ be corresponding Lyapunov exponents of $B$ on the two blocks. Then
\begin{align*}
  \norm{n_1}\geq C\norm{n},\qquad \norm{n_2}\geq C\norm{n}.
\end{align*}
For all $j,\,i,\,k$ the Lyapunov exponent of $\mathcal{C}^jA^iB^k$ on $J_1$ is $\chi(j,i,k)^+=i\lambda_1+k\nu_1$ and is $\chi(j,i,k)^-=i\lambda_2+k\nu_2$ on $J_2$. Next, we need to show that
\begin{align}\label{for:28}
 \{(j,i,k):\chi(j,i,k)^+\geq0\}\bigcup \{(j,i,k):\chi(j,i,k)^-\geq0\}
\end{align}
covers either $K^+$ or $K^-$.  This boils down to require $k(\frac{\nu_1}{\lambda_1}-\frac{\nu_2}{\lambda_2})\geq0$. Namely, for
any $(j,\,i)\in\ZZ^2$, $(j,\,i,\,k)$ belongs to the union in \eqref{for:28} if $k(\frac{\nu_1}{\lambda_1}-\frac{\nu_2}{\lambda_2})\geq0$
and this is true for $k\geq 0$ or for $k\leq 0$ depending on the sign of $\frac{\nu_1}{\lambda_1}-\frac{\nu_2}{\lambda_2}$. Therefore we obtain
\begin{align}\label{for:29}
 \norm{\mathcal{C}^jA^iB^kn}\geq C\abs{ijk}^{-N}\norm{n}.
\end{align}
in $K^+$ or in $K^-$.

Now choose the half-space in which the estimate \eqref{for:29} holds, that is choose
one of the sums $-S_{K_1}+S_{K_2}-S_{K_3}$ or $S_{K_1'}-S_{K_2'}+S_{K_3'}$. Then the assumptions of \eqref{for:27} of Corollary \ref{cor:2} are satisfied for one of the sums above; and therefore the estimate
for follows:
\begin{align}\label{for:41}
 \norm{\mathcal{R}\theta}_{C^r}\leq C_{r}\norm{R_1,R_2,R_3}_{r+\sigma_1}
\end{align}
for any $r\geq 0$ and $\sigma_1>N+2+[\kappa_{y}]$, where $\kappa_{y}=\frac{4(N+1)}{\tau_{A,B}}\abs{\log y}$, where $y=\max\{\norm{A},\norm{B}\}$.

\smallskip
\noindent (3) \emph{Estimates for $\mathcal{R}\omega$}. By  \eqref{for:16} and \eqref{for:30} we obtain
\begin{align}\label{for:32}
\Delta_A(\omega-\Delta_\mathcal{C}\Omega)&=\Delta_\mathcal{C}\mathcal{R}\theta-R_1.
\end{align}
Define
\begin{align*}
  \mathcal{R}\omega&\stackrel{\rm def}{=}\omega-\Delta_\mathcal{C}\Omega\qquad\text{ and }\notag\\
  \mathcal{R}\psi&\stackrel{\rm def}{=}\psi-\Delta_B\Omega.
\end{align*}
By using Lemma \ref{le:5} to equation \eqref{for:32} we obtain
\begin{align}\label{for:42}
\norm{\mathcal{R}\omega}_{C^r}\leq C_r\norm{\Delta_\mathcal{C}\mathcal{R}\theta-R_1}_{C^{r+\sigma}}\leq C_r\norm{R_1,\,R_2,\,R_3}_{C^{r+\sigma+\sigma_1}}
\end{align}
for any $r\geq 0$.

\smallskip
\noindent(3) \emph{Estimates for $\mathcal{R}\psi$} In \eqref{for:3}, substituting $\omega$ by $\Delta_\mathcal{C}\Omega+\mathcal{R}\omega$ and $\psi$ by $\Delta_B\Omega+\mathcal{R}\psi$ we get
\begin{align*}
 A\mathcal{R}\psi-\mathcal{R}\psi\circ A\mathcal{C}=R_3+\mathcal{R}\theta\circ B+B\mathcal{C}\mathcal{R}\theta+B\mathcal{R}\omega\circ A.
\end{align*}
Again, Lemma \ref{le:5} implies that
\begin{align}\label{for:40}
 \norm{\mathcal{R}\psi}_{C^r}&\leq C_{r}\norm{R_3+\mathcal{R}\theta\circ B+B\mathcal{C}\mathcal{R}\theta+B\mathcal{R}\omega\circ A}_{r+\sigma_3}\notag\\
 &\leq C_r\norm{R_1,R_2,R_3}_{C^{r+\sigma+\sigma_1+\sigma_2+\sigma_3}}
\end{align}
where $\sigma_3>N+2+[\kappa_{A^{-1},A\mathcal{C}}]$.

Let $\delta=\sigma+\sigma_1+\sigma_2+\sigma_3$. Then the conclusion follows from \eqref{for:31}, \eqref{for:41}, \eqref{for:42} and \eqref{for:40}.
\end{proof}
In fact $\theta$, $\psi$, $\omega$ play the roles of $R_A$, $R_B$ and $R_\mathcal{C}$ in \eqref{for:11}. The following lemma shows that $R_1,\,R_2,\,R_3$ cannot be large if $\alpha+R$ is a Heisenberg group action. It is in fact quadratically small with respect to $R$.
\begin{lemma}\label{le:3}
If $\widetilde{\alpha}=\alpha+R$ is a $C^\infty$ Heisenberg group action on $\TT^N$ then for any $r\geq 0$
\begin{align}\label{for:44}
 \norm{R_1,R_2,R_3}_{C^r}\leq C_r\norm{\theta,\psi,\omega}_{C^r}\norm{\theta,\psi,\omega}_{C^{r+1}}.
\end{align}
\end{lemma}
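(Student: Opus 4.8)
The plan is to use that $\widetilde\alpha=\alpha+R$ is an \emph{exact} $\H$ action, so that the perturbed generators $\widetilde A=A+\theta$, $\widetilde B=B+\psi$ and $\widetilde{\mathcal C}=\mathcal C+\omega$ (acting on $\TT^N$ by $\widetilde A(x)=Ax+\theta(x)$, and so on) satisfy the exact relations $\widetilde A\widetilde{\mathcal C}=\widetilde{\mathcal C}\widetilde A$, $\widetilde B\widetilde{\mathcal C}=\widetilde{\mathcal C}\widetilde B$ and $\widetilde A\widetilde B=\widetilde B\widetilde A\widetilde{\mathcal C}$ as compositions of maps. Each is an identity between $C^\infty$ maps, so I would expand both sides, cancel the linear (matrix) parts using $A\mathcal C=\mathcal C A$, $B\mathcal C=\mathcal C B$ and $AB=BA\mathcal C$, and observe that the terms linear in $(\theta,\psi,\omega)$ reproduce $R_1,R_2,R_3$ (up to a correction handled below), while everything left over is a first-order Taylor difference, hence quadratic in $R$. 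The only analytic input will be the standard tame product and composition estimates for $C^r$ norms (Moser estimates), together with the elementary bound $\norm{f(\,\cdot\,+h)-f}_{C^r}\le C_r\norm{f}_{C^{r+1}}\norm{h}_{C^r}$, obtained by writing the difference as $\int_0^1 Df(\,\cdot\,+th)\,h\,dt$.

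First I would treat $R_1$. Expanding $\widetilde A\widetilde{\mathcal C}=\widetilde{\mathcal C}\widetilde A$ and cancelling $A\mathcal C x=\mathcal C A x$, then grouping the linear coboundary terms $\Delta_{\mathcal C}\theta=\mathcal C\theta-\theta\circ\mathcal C$ and $\Delta_A\omega=A\omega-\omega\circ A$, yields
\begin{align*}
 R_1=\Delta_{\mathcal C}\theta-\Delta_A\omega
   =\big(\theta\circ\widetilde{\mathcal C}-\theta\circ\mathcal C\big)
    -\big(\omega\circ\widetilde A-\omega\circ A\big).
\end{align*}
Since $\widetilde{\mathcal C}-\mathcal C=\omega$ and $\widetilde A-A=\theta$, the fundamental theorem of calculus writes each bracket as a derivative of one perturbation paired with the other, so the tame estimates give $\norm{R_1}_{C^r}\le C_r\big(\norm{\theta}_{C^{r+1}}\norm{\omega}_{C^r}+\norm{\omega}_{C^{r+1}}\norm{\theta}_{C^r}\big)$, which is of the claimed form. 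The estimate for $R_2$ is identical with $(B,\psi)$ in place of $(A,\theta)$.

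The main obstacle is $R_3$, since three maps occur. Expanding $\widetilde A\widetilde B=\widetilde B\widetilde A\widetilde{\mathcal C}$ and cancelling $ABx=BA\mathcal C x$, the linear-in-perturbation terms do \emph{not} assemble into $R_3$ alone: a short computation shows they equal $R_3+B\,R_1$, the extra term arising because the chain $\widetilde B\widetilde A\widetilde{\mathcal C}$ evaluates $\omega$ and $\theta$ at the shifted points $Ax$ and $\mathcal C x$. Concretely I would obtain
\begin{align*}
 0=R_3+B\,R_1+\big(Q_1-B\,Q_2-Q_3\big),
\end{align*}
where $Q_1=\theta\circ\widetilde B-\theta\circ B$, $Q_2=\theta\circ\widetilde{\mathcal C}-\theta\circ\mathcal C$ and $Q_3=\psi\circ(\widetilde A\widetilde{\mathcal C})-\psi\circ(A\mathcal C)$ are first-order Taylor differences. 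Each $Q_i$ is quadratic by the argument above; for $Q_3$ one notes that $\widetilde A\widetilde{\mathcal C}-A\mathcal C=A\omega+\theta\circ\widetilde{\mathcal C}$ is itself $O(R)$ in $C^r$, paired with $D\psi$. Since $B\,R_1$ is already quadratic by the first step, solving $R_3=-B\,R_1-Q_1+B\,Q_2+Q_3$ and applying the tame estimates term by term gives $\norm{R_3}_{C^r}\le C_r\norm{\theta,\psi,\omega}_{C^r}\norm{\theta,\psi,\omega}_{C^{r+1}}$.

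Collecting the three bounds yields \eqref{for:44}. I expect the only genuinely delicate point to be the bookkeeping in the $R_3$ computation: correctly identifying the linear remainder as $R_3+B\,R_1$ and verifying that $Q_3$, which involves the composite shift $\widetilde A\widetilde{\mathcal C}$, is controlled by $\norm{\psi}_{C^{r+1}}$ times an $O(R)$ factor. The product and composition estimates themselves are routine.
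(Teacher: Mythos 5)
Your proof is correct and follows essentially the same route as the paper: both exploit that the perturbed action satisfies the Heisenberg relations exactly, cancel the linear (matrix) parts, express $R_1,R_2,R_3$ as sums of first-order Taylor differences, and close with tame product/composition estimates. The only cosmetic difference is your choice of composition order $\widetilde B\widetilde A\widetilde{\mathcal C}$ instead of the paper's $\widetilde B\widetilde{\mathcal C}\widetilde A$, which produces your extra $B R_1$ correction; the paper's ordering gives directly $R_3=\bigl(\psi\circ(\widetilde{\mathcal C}\widetilde A)-\psi\circ A\mathcal C\bigr)-\bigl(\theta\circ\widetilde B-\theta\circ B\bigr)+B\bigl(\omega\circ\widetilde A-\omega\circ A\bigr)$, which coincides with your identity after substituting $B\bigl(\omega\circ\widetilde A-\omega\circ A\bigr)=B\,Q_2-B\,R_1$.
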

\begin{proof}
The estimates for $R_1$ an $R_2$ follow the same way as in the proof of Lemma 4.7 in \cite{Damjanovic4}. We just need to show the estimate for $R_3$. Note that
\begin{align*}
  \widetilde{\alpha}_A\circ \widetilde{\alpha}_B&=\widetilde{\alpha}_B\circ \widetilde{\alpha}_{\mathcal{C}}\circ \widetilde{\alpha}_{A}\\
  (A+\theta)\circ (B+\psi)&=(B+\psi)\circ (\mathcal{C}+\omega)\circ (A+\theta).
  \end{align*}
Then
\begin{align*}
 &\theta\circ B+A\psi\\
 &=\psi\circ \big(\omega\circ(A+\theta)+\mathcal{C}A+\mathcal{C}\theta\big)+B\omega\circ(A+\theta)+B\mathcal{C}\theta\\
 &+\theta\circ B-\theta\circ (B+\psi).
\end{align*}
Therefore,
\begin{align*}
  R_3&=\theta\circ B+A\psi-\psi\circ A\mathcal{C}-B\omega\circ A-B\mathcal{C}\theta\\
  &=\psi\circ \big(\omega\circ(A+\theta)+\mathcal{C}A+\mathcal{C}\theta\big)-\psi\circ A\mathcal{C}\\
 &+\theta\circ B-\theta\circ (B+\psi)\\
 &+B\omega\circ(A+\theta)-B\omega\circ A.
\end{align*}
The estimate \eqref{for:44} for $C^r$ norms follows similarly (see for example [\cite{LAZUTKIN}, Appendix II]):
\begin{align*}
 \norm{R_3}_{C^r}&\leq C_r\norm{\psi,\omega\circ(A+\theta)+\mathcal{C}\theta}_{C^r}\norm{\psi,\omega\circ(A+\theta)+\mathcal{C}\theta}_{C^{r+1}}\\
 &+C_r\norm{\psi,\theta}_{C^r}\norm{\psi,\theta}_{C^{r+1}}+C_r\norm{\omega,\theta}_{C^r}\norm{\omega,\theta}_{C^{r+1}}\\
 &\leq C_r\norm{\theta,\psi,\omega}_{C^r}\norm{\theta,\psi,\omega}_{C^{r+1}}.
\end{align*}
\end{proof}

\end{document}